\documentclass[a4paper]{amsart}

\usepackage{listings}

\usepackage{amssymb}
\usepackage{amsthm}  
\usepackage{amsmath} 
\usepackage{hyperref}
\usepackage{color}
\usepackage{graphicx}
\usepackage{subfigure,algorithm}
\hypersetup{linktocpage}

\setcounter{MaxMatrixCols}{15}

\newcommand{\va}{\varphi}
\newcommand{\fg}{\mathfrak g}

\newcommand{\fp}{\mathfrak p}

\newcommand{\fk}{\mathfrak k}

\newcommand{\fm}{\mathfrak m}

\newcommand{\Fl}{{\rm Fl}}

\newcommand{\R}{\mathbb{R}}

\newcommand{\ad}{{\rm ad}}

\newcommand{\D}{\mathcal{D}}
\newcommand{\del}{\partial}
\newcommand{\HH}{\mathcal{H}}

\newcommand{\N}{\mathcal{N}}

\theoremstyle{plain}

\theoremstyle{plain}
\newtheorem{thm*}{Theorem}[section]
\newtheorem{prop*}[thm*]{Proposition}
\newtheorem{lem*}[thm*]{Lemma}
\newtheorem{cor*}[thm*]{Corollary}

\newtheorem{rem}[thm*]{Remark}

\theoremstyle{definition}
\newtheorem{def*}{Definition}
\newtheorem{example}{Example}

\theoremstyle{remark}

\lstset{
frame = single, 
language=Pascal, 
framexleftmargin=1pt}

\begin{document}

\title{Geometric control theory of vertical rolling disc using symmetries}

\author{Jaroslav Hrdina\,$^a$, Ale\v s N\'avrat\,$^{a,d}$, Lenka Zalabov\'a\,$^{b,c}$}

\address{
	$^a$\,
	Institute of Mathematics, 
	Faculty of Mechanical Engineering,  Brno University of Technology,
	Technick\' a 2896/2, 616 69 Brno, Czech Republic
	\\	
	$^b$\,
	Institute of Mathematics, 
	Faculty of Science, University of South Bohemia, 
	Brani\v sovsk\' a 1760, \v Cesk\' e Bud\v ejovice, 370 05, Czech Republic 
	\\
	$^c$\,
	Department of Mathematics and Statistics, 
	Faculty of Science, Masaryk University,
	Kotl\' a\v rsk\' a 2, Brno, 611 37, Czech Republic 
    \\
	$^d$\,
	International School for Advanced Studies, 34136 Trieste, Italy}	 
\email{hrdina@fme.vutbr.cz, navrat.a@fme.vutbr.cz, lzalabova@gmail.com}

\keywords{vertical rolling disk, Lagrangian contact geometry, local control, sub--Riemannian geometry, Pontryagin's maximum principle, nilpotent Lie group}
\subjclass[2010]{53C17, 70Q05, 22E60, 37J60}

\maketitle              

\begin{abstract}
We use the methods of geometric control theory to study extremal trajectories of vertical rolling disk. We focus on the role of symmetries of the underlying geometric structures. We demonstrate the computations in the CAS Maple package DifferentialGeometry.
\end{abstract}

In this article we apply methods of geometric control theory on the vertical rolling disc, \cite{bloch2015}. We use the Hamiltonian viewpoint and Pontryagin's maximum principle to discuss corresponding optimal control problems, \cite{AS,ABB}, and focus on the role of symmetries of the control system and related geometric structures, \cite{parabook}. In Section \ref{sec_ex} we describe the mechanical system of the vertical rolling disc. We also introduce some other mechanisms related to the disc. We also stu\text{d}y controllability of the systems.
 
In Section \ref{akce-g-minus} we formulate the control problem for the optimal movement of the vertical rolling disc. We show that the problem can be studied as a control problem on a Lie group and in this way, we formulate the corresponding system for local extremals. It turns out that solutions of the system cannot be described easily.
In Section \ref{sec_nilp} we describe homogeneous nilpotent approximation of the system, \cite{as87,bel,h86}. We show that the approximation is modelled on the Heisenberg algebra. Corresponding control system is a control system on a nilpotent Lie group that approximates the original system and can be solved explicitly.  We compare the solutions with numerical solutions of the original system on examples.
 
In Section \ref{nilp_sym} we focus on the role of the symmetries of the system. We find symmetries of the nilpotent control system and we study the action of symmetries on local minimizers. In particular, we use isotropy symmetries to discuss conjugate locus.
In Section \ref{sec_metr} we stu\text{d}y possible choices of the control metric. In particular, we focus on the Lagrangian contact structure \cite{parabook} which is determined by  two distinguished directions in the configuration space that play the roles of plane a angular velocities and give a distinguished class of metrics. We also stu\text{d}y its symmetries.
Let us note that role of the geometric structure of contact manifolds are also studied, s.e.g. \cite{en,en2}. 

In Appendix \ref{tanaka} we give an explicit computation of algebraic and geometric Tanaka prolongation which gives symmetries of the Lagrangian contact structure in question, \cite{yam,zel}. We use the computer system Maple and the package DifferentialGeometry to realize the computation and we present it in the text and in Appendix \ref{sec_mapl}, \cite{dg}.

\section{Introduction to plane mechanics} \label{sec_ex}
We introduce several simple plane mechanisms with configurations spaces related to Heisenberg structure. In particularly, we discuss control Lie algebras of the systems. We also mention relations  between vertical rolling disc and kinematic car. 

\subsection{Vertical rolling disc} \label{definice}
We stu\text{d}y a vertical disc rolling in the plane, which is the classical problem that often appears as an example in geometric mechanics,  \cite{bloch2015,ma}.
 Classical references for the vertical rolling disk are \cite{her1995,rei1996}
 and we follow conventions introduced therein. The configuration space of the disc is
$
 \mathbb R^2 \times \mathbb S^1 \times \mathbb S^1
$ with coordinates 
$q = (x,y, \theta, \varphi)$, where $(x,y)$ is the position of the contact point $P_0$  in the plane $\mathbb{R}^2$, $\theta$ is the orientation of the
disk in the plane, and $\varphi$ is the rotation angle of the disk with respect to a fixed $P$, see Figure \ref{disk}. 
\begin{figure}[h]
\includegraphics[width=0.34 \textwidth]{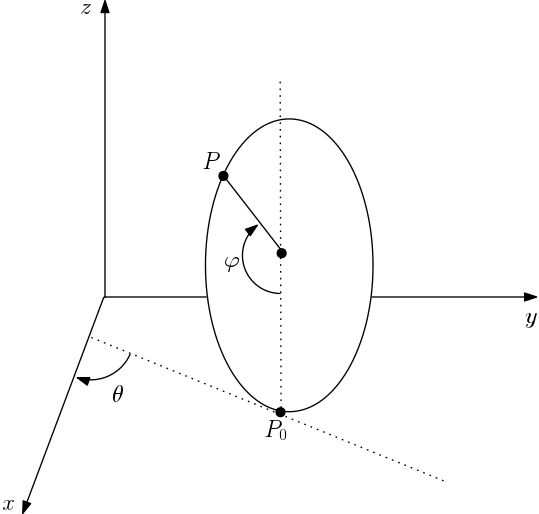}  
\caption{Vertical rolling disc} \label{disk}
\end{figure}
We assume that the radius of the disc equals to $1$. We assume that the rolling of the disc is without slipping nor sliding. Im other words, we suppose that the direct plane velocity of the disc is proportional to angular velocity of the circular motion.
In coordinates, the nonholonomic constraints are
\begin{align} \label{sys_1}
\begin{split}
\dot x & =  \cos \theta \dot \varphi, \\
\dot y & =  \sin \theta \dot \varphi.
\end{split}
\end{align}
We can reformulate equations \eqref{sys_1} in the formalism of differential forms as a Pfaff system     
\begin{align} \label{sys_2}
\begin{split}
\text{d}x- \cos \theta  \text{d} \varphi &=0,  \\ 
\text{d}y- \sin \theta  \text{d} \varphi &=0.  
\end{split}
\end{align}

The annihilator of the Pfaff system \eqref{sys_2} forms the distribution which is spanned by vector fields 
\begin{align} 
\label{f2}
\begin{split} 
Y_1 &= \partial_{\theta}, \\
Y_2 & = \cos \theta  \partial_x + \sin \theta \partial_y + \partial_{\varphi}.  
\end{split}
\end{align} 
In fact, one can read of the solution of \eqref{sys_1} by the choice of two  parameters as $\dot \theta$ and $\dot \varphi $.
To study the kinematic of controlled rolling disk we reformulate the system \eqref{sys_1} as the first--order control system  
\begin{equation} \label{con_sys}
\dot q = u_1 Y_1 + u_2 Y_2,
\end{equation}
where 
$\dot q = \begin{pmatrix} \dot x, \dot y,\dot  \theta, \dot \varphi  \end{pmatrix}$. 
The parameter $\va$ is necessary for the analysis of \text{d}ynamics, \cite{bloch2015}.
There is no need to use the parameter $\va$ for our considerations in control. 
To eliminate $\varphi$ from the system \eqref{sys_1},  we write 
\begin{align} \label{sys_3}
\dot y \cos \theta = \dot x \sin \theta.
\end{align}
We can reformulate the system \eqref{sys_3} as Pfaff system     
\begin{align} \label{sys_4}
\begin{split}
\text{d}y \cos \theta    -  \text{d} x \sin \theta 
=0,
\end{split}
\end{align}
and the annihilator of the system \eqref{sys_4} forms the distribution $\D$ which is spanned by vector fields  
\begin{align}
\label{fields} 
\begin{split}  
X_1 & =\partial_{\theta}, \\
X_2 &=  \cos \theta \partial_x +  \sin \theta \partial_y.
\end{split} 
\end{align}
Then we can rewrite the system \eqref{sys_4} as the control system  
\begin{equation} \label{con_sys_2}
\dot q = u_1 X_1 + u_2 X_2,
\end{equation}
 where $\dot q = \begin{pmatrix} \dot x, \dot y,\dot  \theta \end{pmatrix}$ for 
 $\begin{pmatrix}  x,  y, \theta \end{pmatrix} \in \mathbb R^2 \times \mathbb S^1$.

In the Listing \ref{code_pf} we introduce differential geometry packages, define the configuration space, the Pfaff form \eqref{sys_4} and find its annihilator in Maple. 

\tiny 
\begin{lstlisting}[language=Mathematica ,caption={Pfaff system}
, label={code_pf} ]
restart: with(DifferentialGeometry): with(Tools): with(LieAlgebras):
with(Tensor): with(PDEtools, casesplit, declare): with(GroupActions):
with(LinearAlgebra):
DGsetup([x, y, theta], M);
pf := evalDG(dy*cos(theta) - dx*sin(theta));
an := Annihilator([pf]);
X1 := an[1]; X2 := an[2];
# Different version of Maple can give the different solutions, the following
# modification is necessery in Maple 2019 to get the correct generators.
X1 := an[1]; X2 := evalDG(sin(theta)*an[2]);
\end{lstlisting}
\normalsize

\subsection{Controllability of the mechanisms} 
\label{control}
The systems \eqref{con_sys} and \eqref{con_sys_2} 
  form control systems over configuration spaces $\mathbb R^2 \times \mathbb S^1 \times \mathbb S^1$ and  $\mathbb R^2 \times \mathbb S^1$, respectively, with control functions $u_1$ and $u_2$. The controllability of each linear system
    means that for arbitrary fixed points $x_0,x_1 $ of the configuration space, there exists an admissible control that steers the system from 
      $x_0$ to $x_1$ in the finite time.  
Controllability  of symmetric affine systems  is  completely characterized by controllability Lie algebra by Chow--Rashevskii theorem,  \cite{J,S,MZS}.
\begin{prop*}The control system  \eqref{con_sys} is controllable everywhere in the configuration space  $\mathbb R^2 \times \mathbb S^1 \times  \mathbb S^1$. 
\end{prop*}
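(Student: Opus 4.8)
The plan is to invoke the Chow--Rashevskii theorem cited above. The system \eqref{con_sys} is driftless (symmetric) with smooth vector fields on the connected $4$-manifold $\mathbb R^2\times\mathbb S^1\times\mathbb S^1$, so controllability everywhere reduces to checking that the Lie algebra generated by the fields $Y_1,Y_2$ from \eqref{f2} has full rank $4$ at every point $q$.

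First I would compute the iterated brackets; using that the mixed second derivatives cancel, one obtains
\[
Y_3 := [Y_1,Y_2] = -\sin\theta\,\partial_x + \cos\theta\,\partial_y, \qquad Y_4 := [Y_1,Y_3] = -\cos\theta\,\partial_x - \sin\theta\,\partial_y,
\]
and the process stabilises, since $[Y_1,Y_4]=-Y_3$ and $[Y_2,Y_3]=0$. Then I would check pointwise linear independence of $Y_1,Y_2,Y_3,Y_4$: we have $Y_1=\partial_\theta$; the pair $Y_3,Y_4$ spans the $(\partial_x,\partial_y)$-plane because its coefficient matrix $\left(\begin{smallmatrix} -\sin\theta & \cos\theta \\ -\cos\theta & -\sin\theta\end{smallmatrix}\right)$ has determinant $1$; and $Y_2+Y_4=\partial_\varphi$ supplies the remaining coordinate direction. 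Hence the controllability Lie algebra equals $T_q(\mathbb R^2\times\mathbb S^1\times\mathbb S^1)$ for every $q$, and Chow--Rashevskii yields the stated conclusion.

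I do not expect any genuine obstacle: the argument is a one-line bracket computation followed by a determinant check, and the relevant vector fields could also be produced directly in the Maple package, just as the annihilator in Listing \ref{code_pf}. The only point worth a remark is that the configuration space is not simply connected; but since Chow--Rashevskii requires only connectedness of the manifold, this causes no difficulty, and the same reasoning also shows that the reduced system \eqref{con_sys_2} is bracket-generating on $\mathbb R^2\times\mathbb S^1$.
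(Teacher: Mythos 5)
Your proposal is correct and follows essentially the same route as the paper: compute the brackets $[Y_1,Y_2]$ and $[Y_1,[Y_1,Y_2]]$, verify that together with $Y_1,Y_2$ they span the tangent space at every point, and conclude by Chow--Rashevskii. The only cosmetic difference is that the paper checks the rank by showing the full $4\times 4$ control matrix has determinant $1$, while you use the $2\times 2$ determinant in the $(\partial_x,\partial_y)$-plane together with the observation $Y_2+Y_4=\partial_\varphi$.
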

\begin{proof} 
The controllability Lie algebra is spanned by vector fields \eqref{f2}, i.e.
$ Y_1= \partial_{\theta} $  and 
$Y_2= \cos \theta  \partial_x + \sin \theta \partial_y + \partial_{\varphi}$.  
Let us take Lie brackets  
\begin{align*}
Y_{12}&:=[Y_1,Y_2] =  -\sin \theta \partial_x + \cos \theta \partial_y  ,\\ 
Y_{112}&:=[Y_1,Y_{12}] =  -\cos \theta  \partial_x - \sin \theta \partial_y. 
\end{align*}
The determinant of the Jacobi (control) matrix 
\begin{align} \label{jacobi}
\left (
\begin{smallmatrix}
0&0 & 1 & 0\\
\cos \theta  & \sin \theta  & 0& 1 \\
-\sin \theta  & \cos \theta & 0 &0 \\  
 -\cos \theta  & - \sin \theta & 0 & 0  
 \end{smallmatrix}
\right )
\end{align}
is equal to $1$ in any point of the configuration space. Thus the controllability rank condition holds in any point of the configuration space and the system is controllable in any points by Chow--Rashevskii theorem.
\end{proof}
\begin{cor*}
The control system  \eqref{con_sys_2} is controllable everywhere in the configuration space 
$\mathbb R^2 \times \mathbb S^1$.
\end{cor*}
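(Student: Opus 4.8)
The plan is to invoke the Chow--Rashevskii theorem once more, exactly as in the proof of the Proposition, now for the reduced system \eqref{con_sys_2}. First I would compute the single Lie bracket
\begin{align*}
X_{12} := [X_1,X_2] = -\sin\theta\,\partial_x + \cos\theta\,\partial_y,
\end{align*}
and observe that at every point $q = (x,y,\theta)\in\mathbb R^2\times\mathbb S^1$ the three vector fields $X_1,X_2,X_{12}$ form a basis of the tangent space: the corresponding matrix
\begin{align*}
\begin{pmatrix} 0 & 0 & 1 \\ \cos\theta & \sin\theta & 0 \\ -\sin\theta & \cos\theta & 0 \end{pmatrix}
\end{align*}
has determinant $\cos^2\theta+\sin^2\theta = 1$, so the controllability rank condition holds in every point. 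Since the system is symmetric (the controls $u_1,u_2$ range over all of $\mathbb R$) and $\mathbb R^2\times\mathbb S^1$ is connected, Chow--Rashevskii yields controllability everywhere. Note this matrix is just the specialization of the Jacobi matrix \eqref{jacobi} obtained by deleting the last row and the fourth column, so the computation is immediate once the Proposition is in hand.

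Alternatively, and more in the spirit of stating this as a corollary, I would deduce it directly from the Proposition. The projection $\pi\colon \mathbb R^2\times\mathbb S^1\times\mathbb S^1 \to \mathbb R^2\times\mathbb S^1$ forgetting the coordinate $\varphi$ satisfies $\pi_*Y_1 = X_1$ and $\pi_*Y_2 = X_2$, so every admissible trajectory of \eqref{con_sys} projects, with unchanged controls, to an admissible trajectory of \eqref{con_sys_2}. Given $q_0,q_1\in\mathbb R^2\times\mathbb S^1$, choose any lifts $\tilde q_0\in\pi^{-1}(q_0)$, $\tilde q_1\in\pi^{-1}(q_1)$; by the Proposition there is an admissible control steering \eqref{con_sys} from $\tilde q_0$ to $\tilde q_1$ in finite time, and its image under $\pi$ steers \eqref{con_sys_2} from $q_0$ to $q_1$ in the same time.

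There is essentially no obstacle here, since the bracket computation is a one-line specialization of what was already done. The only point deserving a word of care in the second approach is that controllability descends along a surjective submersion intertwining the control vector fields; this is immediate from the trajectory-projection property just described. I would present the first argument for self-containedness and record the second as a short remark.
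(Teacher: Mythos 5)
Your primary argument is correct and is essentially the paper's own proof: compute $X_{12}=[X_1,X_2]=-\sin\theta\,\partial_x+\cos\theta\,\partial_y$ and note that the resulting $3\times 3$ matrix (the minor of \eqref{jacobi} obtained by deleting the fourth row and fourth column) has determinant $1$, so Chow--Rashevskii applies. Your alternative projection argument via $\pi_*Y_i=X_i$ is also valid and a nice way to deduce the corollary directly from the proposition, but it is not needed.
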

\begin{proof} 
The controllability Lie algebra is generated by vector fields \eqref{fields}, i.e.
$ X_1= \partial_{\theta} $ and
$X_2= \cos\theta \partial_x + \sin\theta\partial_y $.  
Let us consider their Lie bracket
\begin{align*}
X_{12}&:=[X_1,X_2] =  -\sin\theta \partial_x + \cos\theta\partial_y,
\end{align*}
which is exactly the Reeb field of \eqref{sys_3}. The determinant of corresponding Jacobi (control) matrix is a minor in \eqref{jacobi} for fourth row and fourth column, which is equal to $1$ in any point of $M$. Thus the controllability rank condition holds in any point of $M$ and the system is controllable in any points by Chow--Rashevskii theorem.
\end{proof}

Let us note that the vector fields  together with their non--trivial brackets determine a $4$--dimensional and $3$--dimensional, respectively, non-nilpotent solvable 
Lie algebras, see Table \ref{tab}.
\begin{table}[h] 
\begin{center}
\begin{tabular}{ c|| c | c | c | c|} 
 & $Y_1$ & $Y_2$ & $Y_{12}$ & $Y_{212}$ \\
\hline \hline
$Y_1$ & $0$ & $Y_{12}$ & $Y_{112}$ & $Y_2$  \\ 
\hline
 $Y_2$ & $-Y_{12}$ & $0$ & $0$ & $0$  \\
\hline
$Y_{12}$ & $-Y_{112}$ & $0$ & $0$ & $0$  \\
\hline
$Y_{112}$ & $-Y_2$ & $0$ & $0$ & $0$  \\
\end{tabular}  \: \: \: \: \: \: 
\begin{tabular}{ c|| c | c | c |}
 & $X_1$ & $X_2$ & $X_{12}$  \\
\hline \hline
$X_1$ & $0$ & $X_{12}$ & $-X_2$  \\ 
\hline
$X_2$ & $-X_{12}$ & $0$ & $0$   \\
\hline
$X_{12}$ & $X_2$ & $0$ & $0$   \\
\end{tabular}
\end{center}
\caption{Controllability algebras of the systems \eqref{con_sys}  and \eqref{con_sys_2}
}
\label{tab}
\end{table}

In the Listing \ref{cod_cont} we compute the Lie bracket $X_{12}:=[X_1,X_2]$, introduce the Lie algebra $\langle X_1,X_2, X_{12} \rangle $ and discus its properties. 
\tiny 
\begin{lstlisting}[language=Mathematica ,caption={Controllability Lie algebra}
, label={cod_cont} ]
X12 := LieBracket(X1, X2);
Alg := LieAlgebraData([X1, X2, X12]);
DGsetup(Alg);
MultiplicationTable();
Query("Solvable");
Query("Nilpotent");
\end{lstlisting}
\normalsize

\subsection{Relation to Dubin's car} 
Let us now briefly focus on the kinematic car, \cite{ma,MZS,mich}.  
The configuration space of the car is $  \mathbb R^2 \times \mathbb S^1 \times \mathbb S^1 $ with coordinates $q = (x,y, \theta, \varphi)$, where $(x,y)$ is the position of the center of the rear pair of wheels, $\theta$ is the orientation of the car in the plane, and $\varphi$ is the rotation angle of the 
front pair of wheels, see Figure \ref{car}.

\begin{figure}[h]
\includegraphics[width=0.4\textwidth]{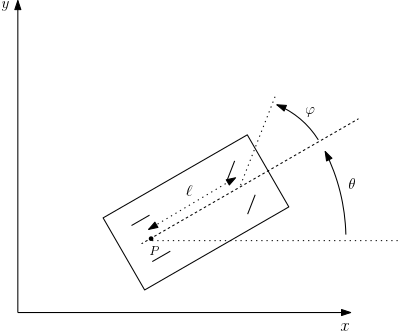} 
\caption{Kinematic car} \label{car}
\end{figure}

The Pfaffian constraints on the admissible movements of the car are  
\begin{align} \label{car 2}
\begin{split}
\sin( \theta + \varphi) \dot x - \cos (\theta + \varphi)
\dot y - \ell \cos \varphi \dot \theta &= 0,\\
\sin\theta\dot x - \cos\theta\dot y & =0.
\end{split}
\end{align} 
The corresponding control system  
 with the inputs chosen as the plane velocity $u_1=\dot \va$ and the angular velocity $u_2$ has the form $\dot q=u_1Z_1+u_2Z_2$ for
\begin{align*} 
 Z_1 &= \partial_{\va}, \\
Z_2 &=  
 \cos\theta\partial_x + \sin\theta\partial_y + \frac{\tan \varphi}{\ell}   \partial_{\theta}.
 \end{align*}
We would like to discuss the relation of this system with the system \eqref{con_sys_2}.  Let us avoid the role of the rear pair of wheels and fix the orientation of the front pair of wheels.
This means that we ignore the second nonholonomic constraint of \eqref{car 2} and we fix $\varphi \in \mathbb S^1$. 
This leads to the control system for  
\begin{align}
\begin{split} 
\bar Z_1 &= \cos\theta \partial_x + \sin\theta \partial_y, \\
\bar Z_2 &= -\sin\theta \partial_x + \cos\theta  \partial_y - \frac{1}{\ell} \partial_{\theta}.
\end{split}  \label{vert_rol}
\end{align}
These two fields together with the bracket
\begin{align*}
\bar Z_{12} &= [\bar Z_1,\bar Z_2] =  -\frac{\sin\theta}{\ell} \partial_x +
  \frac{\cos\theta}{\ell} \partial_y
\end{align*}
form the  three--dimensional solvable Lie algebra with the multiplicative table \ref{tab}.

\begin{rem}
 The term Dubin's path typically refers to the shortest curve that connects two points in the two-dimensional Euclidean plane and satisfies a constraint on the curvature. It turns out that constraints determining Dubin's paths are equivalent to the constraints for the movement of vertical rolling disc,  \cite{dub,reeds}. 
\end{rem}

\section{Control of the vertical rolling disk} \label{akce-g-minus}
In the next, we will discuss only the vertical rolling disc \eqref{fields}. One can adapt the methods for related mechanisms and particularly for mechanisms of kinematic car corresponding to \eqref{vert_rol}, that has the same structure of the configurations space.

\subsection{Formulation of the problem} \label{pmp-orig}
The configuration space  carries a canonical filtration with the growth $(2,3)$ given by the fact that the disc can move only in the direction of the horizontal distribution $\D$, which is generated by the fields $X_1=\del_\theta$ and $X_2=\cos\theta\del_x+\sin\theta\del_y$. Moreover, we define the sub--Riemannian metric $k$ in $\D=\langle X_1,X_2\rangle$ such that the fields $X_1$, $X_2$ are orthonormal with respect to $k$. Explicitly, $$k= \text{d}\theta^2 + (\cos\theta \text{d}x+ \sin\theta \text{d}y)^2
.$$

In the Listing \ref{cod_met} we compute the dual basis of the basis $X_1,X_2,X_{12}$ and define the control metric $k$.
\tiny 
\begin{lstlisting}[language=Mathematica ,caption={sub--Riemannian metric}, label ={cod_met}]
ChangeFrame(M);
db := DualBasis([X1, X2, X12]);
k := evalDG((db[1] &t db[1]) + (db[2] &t db[2]));
\end{lstlisting}
\normalsize
The fields $X_1,X_2$ and $X_{12}$ determine the Lie algebra
$$
\fk=(\langle X_1,X_2,X_{12} \rangle,[X_1,X_2]=X_{12},\ [X_1,X_{12}]=-X_2),
$$
see Table \ref{tab}.
In particular, we can view locally the configuration space as the connected, simply connected Lie group $K$ of $\fk$ and the fields $X_1,X_2,X_{12}$ are left-invariant with respect to the group structure on $K$.
Altogether, there is a Lie group structure such that the sub--Riemannian structure $(K,\D,m)$ is left-invariant with respect to this group structure.

Consider two configurations $k_1, k_2 \in K$. Among all admissible curves $c(t)$, i.e. locally Lipschitz
curves such that $c(0) = k_1$ and $c(T) = k_2$ that are tangent to $\D $ for almost all $t \in [0, T]$, we would like to find length minimizers with respect to control metric $k$. We would like to minimize the length $l$ among all the horizontal curves $c$, where the length is given by $l(c)=\int_0^T\sqrt{k(\dot c, \dot c)} dt$ for  $k$. Let us recall that the distance between two points $k_1,k_2 \in K$ is defined as $d: K \times K \to [ 0, \infty]$,
$d (k_1,k_2) = \inf_{\{c \in \mathcal S_{k_1,k_2}\}} l(c),$
where 
$
\mathcal S_{k_1,k_2} = \{c : c(0)=k_1, c(T) =k_2, c \text{\ \ admissible} \}
$,  \cite{ABB,calin,J}.
However, since minimizing of the energy of a curve implies minimizing of its length, we will rather minimize energy of curves.
Altogether, we would like to study the following optimal control problem 
\begin{align} \label{control_orig}
\dot c (t)=u_1X_1+u_2X_2 =u_1 \left(
\begin{array}{c}
0\\
0\\
1\\
\end{array}
\right) 
+u_2 \left(
\begin{array}{c}
\cos\theta\\
\sin\theta\\
0\\
\end{array}
\right)
\end{align}
for $c$ in $K$ and the control $u=(u_1,u_2)$ with the boundary condition $c(0)=k_1,$ $c(T)=k_2$,
where we minimize ${1 \over 2}\int_0^T (u_1^2+u_2^2) dt$.

\subsection{Left-invariant Hamiltonians}
We use the left invariance of the control system \eqref{control_orig} and we view  it as a control problem on a Lie group, \cite{ABB}. The tangent bundle $TK$ and the cotangent bundle $T^*K$ are trivializable via the left-invariant fields $X_1, X_2, X_3:= X_{12}$. This means that we can identify each left-invariant object on $K$ with its value at the origin $o \in K$. In particular, we identify $T_oK \simeq \fk$, where the generators $e_i,$ $i=1,2,3$ of $\fk$ correspond to $X_i(o),$ $i=1,2,3$. For each vector field $v$ on $K$ and $g \in K$, the vector  $v(g) \in T_gK$ corresponds to $(g,\mu) \in K \times \fk$, where $\mu=T\ell_{g^{-1}}.v \in T_oK \simeq \fk$ and  $\ell_g$ denotes the left multiplication given by the group structure in $K$. 
Analogously, $T_o^*K \simeq \fk^*$ for the dual basis of $X_i$, $i=1,2,3$ and the generators $e_i^*$ of $\fk^*$ dual to $e_i$, $i=1,2,3$. For each one-form $f$ on $K$ and $g \in K$, the covector $f(g) \in T^*_gK$ corresponds to $(g,\xi) \in K \times \fk^*$, where $\xi=T^*\ell_{g^{-1}}.f \in T^*_oK \simeq \fk^*$. Then $f(g)=\sum_i h_i T^*\ell_g.e_i^*$ for suitable functions $h_i$ called \emph{vertical coordinates} and we can write $\xi=\sum_i h_i e_i^*$, where functions $h_i$ are determined by the value at $o$ by left action. 
\begin{lem*} \label{1.2}
It holds $h_i(f(g))=\langle X_i(g), f \rangle$ and functions $h_i:T^*K \to \R$ are independent of the choice $g \in K$. Their Poisson bracket is given as $\{ h_i, h_j \}= \langle [X_i,X_j], f \rangle=\langle [e_i, e_j], \xi \rangle$, where $\langle\ , \ \rangle$ denotes the evaluation.
\end{lem*}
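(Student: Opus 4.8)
The plan is to unwind the left-trivialization $T^*K\simeq K\times\fk^*$ and then to invoke the standard fact that the Hamiltonian lift of vector fields is a Lie-algebra morphism into $(C^\infty(T^*K),\{\cdot,\cdot\})$. None of the three assertions is deep; the content is essentially bookkeeping, and the one thing to watch is keeping the conventions — left- versus right-translation, the direction of the cotangent lift, the sign of the Poisson bracket — consistent with the identification $X_i(o)=e_i$ already fixed above.

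First I would establish the pairing formula. By left-invariance $X_i(g)=T\ell_g.e_i$, while by the definition of the vertical coordinates $f(g)=\sum_j h_j(f(g))\,T^*\ell_g.e_j^*$. By construction of the left-trivializations of $TK$ and $T^*K$, the natural pairing $T_gK\times T_g^*K\to\R$ corresponds under left-translation to the pairing $\fk\times\fk^*\to\R$, so $\langle T\ell_g.e_i,T^*\ell_g.e_j^*\rangle=\langle e_i,e_j^*\rangle=\delta_{ij}$; pairing the two displayed expressions then collapses the sum to $h_i(f(g))$, which is exactly $h_i(f(g))=\langle X_i(g),f\rangle$. Equivalently, writing an arbitrary covector $\lambda\in T_g^*K$ in the form $f(g)$ and letting $\pi:T^*K\to K$ denote the projection, this reads $h_i(\lambda)=\langle X_i(\pi\lambda),\lambda\rangle$, whose right-hand side involves only the point $\lambda$, not the auxiliary one-form $f$ or a choice of $g$. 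In the trivialization $\lambda=(g,\xi)$ one gets $h_i(g,\xi)=\langle e_i,\xi\rangle$, the $i$-th component of $\xi$, which is manifestly independent of $g$; hence each $h_i$ is a well-defined smooth function on $T^*K$, linear on the fibers. This disposes of the first two claims.

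For the Poisson bracket I would use the general identity that, for the canonical symplectic form on $T^*K$, the assignment $Z\mapsto h_Z$ with $h_Z(\lambda)=\langle Z(\pi\lambda),\lambda\rangle$ from vector fields on $K$ to fiberwise-linear functions satisfies $\{h_Z,h_W\}=h_{[Z,W]}$ (with the sign convention fixed above); this is verified in one line in Darboux coordinates $(q,p)$, where $h_Z=p_aZ^a(q)$ and the bracket computes to $p_a[Z,W]^a$. Applying this with $Z=X_i$ and $W=X_j$, and using that $[X_i,X_j]$ is again left-invariant with $[X_i,X_j](o)=[e_i,e_j]$, the pairing formula of the previous step gives $\{h_i,h_j\}=h_{[X_i,X_j]}=\langle[X_i,X_j],f\rangle=\langle[e_i,e_j],\xi\rangle$. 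The only genuine pitfall is the sign of this bracket: with the opposite Poisson convention, or with right-invariant fields, one would land on $-\langle[e_i,e_j],\xi\rangle$, so I would pin down all conventions at the outset so that the signs match the statement.
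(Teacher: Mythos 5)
Your proposal is correct and follows essentially the same route as the paper: the pairing formula $h_i(f(g))=\langle X_i(g),f\rangle$ is obtained exactly as in the paper's one-line computation by unwinding the left trivialization, and your Darboux-coordinate verification of $\{h_Z,h_W\}=h_{[Z,W]}$ (with the sign convention fixed) simply makes explicit the standard momentum-function identity that the paper compresses into ``The rest follows.'' No gaps; your caution about the left- versus right-invariant sign is appropriate and consistent with the statement.
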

\begin{proof}
We have $\langle X_i(g), f \rangle=\langle e_i,T^*\ell_{g^{-1}}.f \rangle = \langle e_i, \xi \rangle=\langle e_i, \sum_j h_je_j^* \rangle = h_i$. The rest follows.
\end{proof}
Consider arbitrary Hamiltonian $H: T^*K \to \R$. Identification $T^*K \simeq K \times \fk^*$ allows to find a function $\HH:K \times \fk^* \to \R$ such that 
$\HH(g,\xi)=H((T^*\ell_g.\xi)(g))$. Then the Hamiltonian $H$ is left-invariant if the function $\HH$ does not depend on $g \in K$ and one can view it as a function $\HH : \fk^* \to \R $. For $f=\sum_i h_i T^*\ell_g e_i^*$ we then have $H(f(g))=\HH(\sum_i h_i e_i^*)$.

\subsection{Pontryagin's maximum principle}

We say that the pair $(\hat u(t),\hat g(t))$ of a control and a trajectory is an  \emph{optimal pair} if $\hat g(t)$ is a length minimizer and satisfies $\dot g=u_1X_1(g)+u_2X_2(g)$, $g \in K$ with the control function $u=\hat u(t)$.
\begin{def*}
The \emph{Hamiltonian of the maximum principle} is a family of smooth functions parametrized by controls $u=(u_1,u_2) \in \R^2$ and a real number $\nu \leq 0$ given by 
$$
H(\nu,f)=\langle u_1X_1+ u_2 X_2, f \rangle+{\nu \over 2}(u_1^2+u_2^2)=u_1h_1+u_2h_2 +{\nu \over 2}(u_1^2+u_2^2).
$$
\end{def*}
If $\nu=0$, we speak about \emph{abnormal Hamiltonian}. Otherwise, we speak about \emph{normal Hamiltonian}. Then we can normalize $H$ by the choice $\nu=-1$. 

\begin{thm*}[PMP, \cite{AS,ABB}]  If a pair $(\hat u(t),\hat g(t))$ is optimal in a minimization problem, then there exists a Lipschitzian curve $f(t) \in T^*_{\hat g(t)}M$ and a number  $\nu \leq 0$ such that the following hold:
\begin{itemize}
\item $(f(t),\nu) \neq 0$,
\item $\dot f(t)=\vec H_{\hat u(t)}(f(t))$, where $\vec H$ is the Hamiltonian vector field of $H$, 
\item $H_{\hat u(t)}= \textrm{max}_{u} H_u(f(t),\nu).$
\end{itemize}
\end{thm*}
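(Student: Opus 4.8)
The statement is the classical Pontryagin Maximum Principle, so the plan is to reproduce its standard proof by needle variations and convex separation (cf.\ \cite{AS,ABB}); I only indicate the skeleton, since the left-invariant trivialization set up above and the Lemma make the bookkeeping transparent. First I would pass to the \emph{cost-extended system} on $\R\times K$, adjoining a coordinate $g_0$ with $\dot g_0=\tfrac12(u_1^2+u_2^2)$, so that an optimal pair $(\hat u,\hat g)$ corresponds to a trajectory of the extended system which, among all trajectories issued from $(0,k_1)$ in time $T$, reaches the fiber $\R\times\{k_2\}$ at the point with smallest $g_0$-coordinate.

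Next I would introduce \emph{needle variations} of $\hat u$: on short intervals ending at a Lebesgue point $\tau$ of $\hat u$, replace $\hat u$ by a constant value $w\in\R^2$, and push the resulting first-order endpoint displacement forward to time $T$ along the variational (linearized) equation of the extended flow. The displacement produced by a single needle at $\tau$ with value $w$ is the image under this linear flow of the vector $\big(\tfrac12(|w|^2-|\hat u(\tau)|^2),\,(w_1-\hat u_1(\tau))X_1(\hat g(\tau))+(w_2-\hat u_2(\tau))X_2(\hat g(\tau))\big)$. Let $\mathcal C\subset T_{(\hat g_0(T),k_2)}(\R\times K)$ be the smallest closed convex cone containing all such vectors over all choices of $\tau$ and $w$. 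A standard composition-of-needles argument, made rigorous by a Brouwer degree / implicit function argument applied to the parametrized endpoint map, shows that any direction in the interior of $\mathcal C$ is genuinely attainable by an admissible variation of the endpoint.

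Then optimality enters: no admissible variation can keep $g(T)=k_2$ while strictly decreasing $g_0(T)$, so the direction $(-1,0)$ cannot lie in the interior of $\mathcal C$. By the Hahn--Banach separation theorem there is a nonzero pair $(\nu,\xi_T)\in\R\times T^*_{k_2}K$ that is nonpositive on $\mathcal C$ and pairs nonnegatively with $(-1,0)$, which forces $\nu\le 0$. Transporting $(\nu,\xi_T)$ backward from time $T$ by the flow of the control-frozen extended Hamiltonian yields a Lipschitz curve $f(t)\in T^*_{\hat g(t)}M$ with $\dot f(t)=\vec H_{\hat u(t)}(f(t))$ for $H_u=u_1h_1+u_2h_2+\tfrac\nu2(u_1^2+u_2^2)$, where $h_i(f)=\langle X_i,f\rangle$ as in the Lemma; nontriviality $(f(t),\nu)\neq 0$ is the propagation of $(\nu,\xi_T)\neq 0$, since $\nu$ is constant along this flow and the $K$-component of the flow is the cotangent lift of $\dot g=\hat u_1X_1+\hat u_2X_2$. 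Finally, because the cotangent flow is dual to the variational flow of Step 2, the inequality that $(\nu,\xi_T)$ is nonpositive on $\mathcal C$ transports to: $f(\tau)$ is nonpositive on the time-$\tau$ needle cone, which reads $H_w(f(\tau),\nu)\le H_{\hat u(\tau)}(f(\tau),\nu)$ for all $w\in\R^2$, i.e.\ $H_{\hat u(t)}=\textrm{max}_u H_u(f(t),\nu)$ for a.e.\ $t$.

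The main obstacle is the passage from infinitesimal to genuine reachability in Step 2: one must show that optimality really excludes a full neighborhood of endpoints, not merely first-order directions, which is the one point requiring a nontrivial topological argument (Brouwer fixed point / mapping degree) and which also forces the needle variations to be based at Lebesgue points of the merely $L^\infty$ control $\hat u$. Everything else is routine once the identifications $h_i(f)=\langle X_i,f\rangle$ and $\{h_i,h_j\}=\langle[e_i,e_j],\xi\rangle$ from the Lemma are in hand.
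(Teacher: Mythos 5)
The paper does not prove this statement at all: PMP is quoted verbatim from the cited sources \cite{AS,ABB} and used as a black box, so there is no internal proof to compare against. Your outline is the classical needle-variation argument that those references give (cost-extended system, needle variations at Lebesgue points, convex cone of endpoint variations, Brouwer-degree attainability of interior directions, Hahn--Banach separation yielding $(\nu,\xi_T)\neq 0$ with $\nu\le 0$, backward transport by the adjoint flow, and the pairing identity $\nu\cdot\tfrac12(|w|^2-|\hat u(\tau)|^2)+\sum_i(w_i-\hat u_i(\tau))h_i(f(\tau))=H_w-H_{\hat u(\tau)}$ giving the maximum condition a.e.), and the skeleton is sound, including the correct identification of the two genuinely delicate points (local attainability from the open cone, and the restriction to Lebesgue points of the $L^\infty$ control). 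Bear in mind it remains a skeleton rather than a complete proof, but as such it is faithful to the proof in the cited literature, which is all the paper itself relies on.
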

The curve $f(t)$ is usually called \emph{extremal}. The extremal is called \emph{normal} (resp. \emph{abnormal}) if it corresponds to normal (resp. abnormal) Hamiltonian of the maximum principle. It follows from \cite{J} that the projection of abnormal to $K$ always is also  projection of a normal, so there are no strict abnormals for $1$--step filtrations.  In the next, we focus only on normal extremals. Let us note that abnormal extremals are often studied on longer filtrations, \cite{bm, s2358}.

According to the third condition, the extreme of the normal Hamiltonian of PMP is achieved when ${\partial H \over \partial u_i}=h_i-u_i=0$ for $i=1,2$ and this implies that $u_i=h_i$, $i=1,2$ for the controls.  In this case, the Hamiltonian of the maximum principle is  
\begin{align} \label{ham}
H={1 \over 2}(h_1^2+h_2^2)
\end{align}
and this is left-invariant.

The Hamiltonian system of PMP for a sub--Riemannian structure $(K,\D,k)$ is given by 
\begin{align}
&\dot h_i=\{ H,h_i \}, \label{ver_obec} \\
& \dot g= h_1X_1(g)+h_2X_2(g) \label{hor_obec}
\end{align}
and it follows from  the left-invariance that the vertical system \eqref{ver_obec} takes the form 
\begin{align} \label{ad}
\dot h_i=
\langle (\ad\  d\HH)e_i, \xi \rangle = \langle e_i, (\ad\  d\HH)^* \xi \rangle,
\end{align} 
where $\HH:\fk^* \to \R$ corresponds to the Hamiltonian of the maximum principle $H$ and $\xi \in T^*_oK$, \cite{AS}.

\begin{prop*} \label{extr-orig}
Normal extremals of the rolling disc problem $(\ref{ver_obec},\ref{hor_obec})$ are solutions of the system 
\begin{align} \label{ver}
&\dot h_1= -h_3h_2, \ \ \ \ 
\dot h_2= h_3h_1, \ \ \ \ 
\dot h_3= -h_1h_2,\\ \label{hor}
&\dot x=h_2\cos\theta,\ \ \ \ 
\dot y=h_2\sin\theta,\ \ \ \ 
\dot \theta = h_1,
\end{align}
where \eqref{ver} is the vertical system and \eqref{hor} is the horizontal system.
\end{prop*}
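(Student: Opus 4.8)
The plan is to unwind the Hamiltonian system $(\ref{ver_obec},\ref{hor_obec})$ using the explicit form of the normal Hamiltonian together with the bracket relations of $\fk$. First, recall from the discussion preceding \eqref{ham} that along normal extremals the maximum condition of PMP forces $u_i=h_i$ and reduces the Hamiltonian of the maximum principle to $H=\tfrac12(h_1^2+h_2^2)$, which is left-invariant; hence it descends to a function $\HH$ on $\fk^*$ and the vertical system \eqref{ver_obec} takes the homogeneous form \eqref{ad}, so that everything can be computed at the origin $o\in K$.

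Next I would compute the vertical equations directly from $\dot h_i=\{H,h_i\}$ by the Leibniz rule for the Poisson bracket, using the identification $\{h_i,h_j\}=\langle [e_i,e_j],\xi\rangle$ supplied by Lemma~\ref{1.2}. Reading the structure constants of $\fk$ off Table~\ref{tab}, namely $[X_1,X_2]=X_{12}=X_3$, $[X_1,X_3]=-X_2$, and $[X_2,X_3]=0$, gives $\{h_1,h_2\}=h_3$, $\{h_1,h_3\}=-h_2$, $\{h_2,h_3\}=0$. Therefore $\dot h_1=\{H,h_1\}=h_2\{h_2,h_1\}=-h_2h_3$, $\dot h_2=\{H,h_2\}=h_1\{h_1,h_2\}=h_1h_3$, and $\dot h_3=\{H,h_3\}=h_1\{h_1,h_3\}+h_2\{h_2,h_3\}=-h_1h_2$, which is precisely \eqref{ver}.

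Finally, the horizontal system \eqref{hor} follows by substituting the explicit left-invariant fields $X_1=\del_\theta$ and $X_2=\cos\theta\,\del_x+\sin\theta\,\del_y$ from \eqref{fields} into \eqref{hor_obec}, i.e.\ $\dot g=h_1X_1(g)+h_2X_2(g)$, which in the coordinates $(x,y,\theta)$ reads $\dot x=h_2\cos\theta$, $\dot y=h_2\sin\theta$, $\dot\theta=h_1$.

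I expect no genuine obstacle in this argument: the computation is short and the only points requiring care are the correct bookkeeping of the structure constants when passing from Table~\ref{tab} to the Poisson brackets (in particular the sign in $[X_1,X_{12}]=-X_2$), and the fact that the vertical coordinates $h_i$ are well-defined functions on $T^*K$ independent of the base point, which is exactly the content of Lemma~\ref{1.2} and is what allows the reduction to \eqref{ad}.
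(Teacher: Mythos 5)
Your proposal is correct and follows essentially the same route as the paper: the reduction via left-invariance, the identity $u_i=h_i$, and Lemma~\ref{1.2} turn everything into a computation with the structure constants $[e_1,e_2]=e_3$, $[e_1,e_3]=-e_2$ of $\fk$, and the horizontal part is read off from the explicit fields $X_1,X_2$. The only cosmetic difference is that you expand $\{H,h_i\}$ by the Leibniz rule directly, whereas the paper packages the same calculation as the matrix of $\ad(d\HH)$ in formula \eqref{ad}; the resulting equations agree.
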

\begin{proof}The left invariant Hamiltonian $H$ from equation \eqref{ham} satisfies $dH=h_1 dh_1+h_2 dh_2$ and thus $d\HH=h_1 e_1+h_2 e_2$ in formula \eqref{ad}. Direct computation gives that the adjoint action $\ad(h_1 e_1+h_2 e_2)$ viewed as a linear endomorphism is represented in the basis $e_i$, $i=1,2,3$ by the matrix 
$\left( \begin{smallmatrix} 
0 & 0 & 0\\
0 & 0 & -h_1 \\
-h_2 & h_1 & 0
\end{smallmatrix} \right)$. Then we read of directly the system \eqref{ver} from the action of this matrix. The horizontal system \eqref{hor} follows from the form of the generators $X_1$ and $X_2$ of $\D$.
\end{proof}

In the Listing \ref{cod_adj} we compute the adjoint action of the controllability algebra on three--dimensional vector space $VS$.

\tiny 
\begin{lstlisting}[language=Mathematica ,caption={Adjoint represenation}, label={cod_adj}]
DGsetup([v1, v2, v3], VS);
DGsetup(Alg);
LieAlgebras:-Adjoint(Alg, representationspace = VS);
LieAlgebras:-Adjoint(e1*h1 + e2*h2);
\end{lstlisting}
\normalsize
The system (\ref{ver},\ref{hor}) can be solved and the solution can be found in \cite{samo}. However, the solution is formulated in the language of Jacobi elliptic functions that are hard to use in implementations. Thus we swap in our next considerations to nilpotent approximation which is simpler model but still describes the system appropriately.

\section{Homogeneous nilpotent approximation of disc} \label{sec_nilp}

The homogeneous nilpotent approximation is an approximation of the control distribution such that it is spanned by a nilpotent basis but the geometric structure on the configuration space is preserved. By taking Lie brackets it defines a nonholonomic tangent space that can be regarded as the ``principal part''  of the structure defined on the configuration space by the distribution in a neighbourhood of a point, for details see \cite{ABB}.  An algebraic construction of the homogeneous nilpotent approximation was developed by various researchers, s.e.g. \cite{as87,h86}. Roughly said, it is obtained by taking Taylor polynomials of suitable degrees of coefficients of the control vector fields expressed in privileged coordinates. For a definition of privileged coordinates see \cite{ABB, bel} or the references above. 

We have shown in Section \ref{control} that the rolling disc distribution $\D$ is equiregular with the growth vector $(2,3)$ at every point. As we shall see the nonholonomic tangent space is unique and given by the Heisenberg group in this case.

\subsection{Construction of nilpotent approximation}

The distribution of the rolling disc is defined  in Section \ref{definice}
 by means of vector fields \eqref{fields} that reads
\begin{align*}
X_1 &=\partial_{\theta}, \\
X_2 &=\cos\theta\partial_x+\sin\theta \partial_y.
\end{align*}
In this subsection, we consider the coordinates in the order $(\theta,x,y)\in \mathbb S^1\times\mathbb{R}^2$.
The reason is that this coordinate system is already privileged with weights $(1,1,2)$ and the nilpotent approximation is obtained simply by the usual engineering approximation $\cos\theta\approx 1$, $\sin\theta\approx \theta.$ 
\begin{lem*} \label{NA}
The nilpotent approximation at the origin $(0,0,0)$ of the rolling disc is given by the distribution generated by
\begin{align} \label{nilp}
\begin{split} 
 n_1 &=\partial_{\theta}, \\ 
 n_2 &=\partial_x + \theta \partial_y 
\end{split}
\end{align}
 with the only non--trivial bracket $[n_1,n_2]=n_3:=\partial_y$. Thus the symbol is the Lie algebra  
 $$\mathfrak m:= ( \langle n_1,n_2,n_3 \rangle, [n_1,n_2]=n_3 ),$$
 which is the graded nilpotent Lie algebra 
$\mathfrak m = \mathfrak g^{-2} \oplus  \mathfrak g^{-1}$, 
where $\mathfrak g^{-2} = \langle n_3 \rangle$ and   $\mathfrak g^{-1} = \langle n_1, n_2 \rangle$.
\end{lem*}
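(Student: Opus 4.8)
The plan is to apply the standard construction of the homogeneous nilpotent approximation via privileged coordinates, following \cite{ABB,bel,as87,h86}. By Section~\ref{control} the distribution $\D=\langle X_1,X_2\rangle$ is equiregular with growth vector $(2,3)$ at every point, so the flag $\D^1=\D\subset\D^2=TM$ attaches weights $w_1=w_2=1$ to the two directions in $\D$ and $w_3=2$ to the complementary direction. The first step is to confirm that the coordinates $(\theta,x,y)$ centered at the origin, taken in that order, are privileged for this flag. This reduces to checking that the nonholonomic order at $0$ of each coordinate function equals its weight: $X_1\theta=1$ gives $\operatorname{ord}_0\theta=1$; $X_1x=0$ while $X_2x=\cos\theta$ equals $1$ at the origin, so $\operatorname{ord}_0x=1$; and $X_1y=0$, $X_2y=\sin\theta$ vanishes at the origin, whereas $X_1X_2y=\cos\theta$ equals $1$ there, so $\operatorname{ord}_0y=2$. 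Hence $(\theta,x,y)$ is privileged with weights $(1,1,2)$, as asserted before the statement.

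Assigning weighted degree $-1$ to $\partial_\theta$ and $\partial_x$ and degree $-2$ to $\partial_y$, the second step is to expand the coefficients of $X_1$ and $X_2$ in Taylor series at the origin and retain only the homogeneous component of weighted degree $-1$. The field $X_1=\partial_\theta$ is already homogeneous of degree $-1$, so $n_1=\partial_\theta$. For $X_2$, writing $\cos\theta=1-\frac{1}{2}\theta^2+\cdots$ and $\sin\theta=\theta-\frac{1}{6}\theta^3+\cdots$, the monomial vector fields $\partial_x$ and $\theta\,\partial_y$ both have weighted degree $-1$ (namely $0-1$ and $1-2$), while every remaining term has strictly larger degree; hence $n_2=\partial_x+\theta\,\partial_y$. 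This is exactly the engineering truncation $\cos\theta\approx1$, $\sin\theta\approx\theta$.

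The third step is the bracket computation, which is immediate: $[n_1,n_2]=[\partial_\theta,\partial_x+\theta\,\partial_y]=(\partial_\theta\theta)\,\partial_y=\partial_y=:n_3$, while $[n_1,n_3]=[\partial_\theta,\partial_y]=0$ and $[n_2,n_3]=[\partial_x+\theta\,\partial_y,\partial_y]=0$, so $[n_1,n_2]=n_3$ is the only nontrivial bracket and $\mathfrak m=\langle n_1,n_2,n_3\rangle$ is the three-dimensional Heisenberg algebra; it is nilpotent since $[\mathfrak m,[\mathfrak m,\mathfrak m]]=[\mathfrak m,\langle n_3\rangle]=0$. Setting $\mathfrak g^{-1}=\langle n_1,n_2\rangle$ and $\mathfrak g^{-2}=\langle n_3\rangle$ (the homogeneous parts of weighted degree $-1$ and $-2$) gives a grading, because $[\mathfrak g^{-1},\mathfrak g^{-1}]=\mathfrak g^{-2}$ and $[\mathfrak g^{-1},\mathfrak g^{-2}]=0$, and $\mathfrak m$ is generated by $\mathfrak g^{-1}$. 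Finally, to see that this nonholonomic tangent space is the same at every point, one compares $\mathfrak m$ with the symbol $\operatorname{gr}(\D)_p=\D_p\oplus(T_pM/\D_p)$ equipped with the Lie-bracket-induced map $\Lambda^2\D_p\to T_pM/\D_p$; since $[X_1,X_2]\notin\D$ everywhere, this map is nonzero at each $p$, which forces $\operatorname{gr}(\D)_p\cong\mathfrak m$ for all $p$.

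\textbf{Main difficulty.} There is no deep obstacle here; the only place demanding care is the verification that $(\theta,x,y)$ is privileged together with the bookkeeping of weighted degrees in the Taylor expansions, since an incorrect weight assignment would change which monomials survive the degree-$(-1)$ truncation and hence alter the form of $n_2$.
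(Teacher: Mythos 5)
Your proposal is correct and follows essentially the same route as the paper's proof: verifying that $(\theta,x,y)$ are privileged with weights $(1,1,2)$ via nonholonomic derivatives at the origin (your $X_1X_2y$ in place of the paper's $[X_1,X_2](y)$ is an equivalent check), then truncating the Taylor expansions of the coefficients at the appropriate weighted degree to obtain $n_1,n_2$, and computing the brackets. The only difference is that you spell out the weighted-degree bookkeeping and the pointwise uniqueness of the symbol in more detail than the paper, which summarizes these as ``the rest follows.''
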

\begin{proof}
The fact that coordinates $(\theta,x,y)$ are privileged at $(0,0,0)$ can be seen either from the Maclaurin expansion of coefficients of vector fields \eqref{fields} or checked directly by computing nonholonomic derivatives of the coordinates at the origin. Indeed, in $(0,0,0)$ we have $X_1(\theta)=1$, $X_2(x)=1$ while $X_1(y)=X_2(y)=0$ and $[X_1,X_2](y)=1.$ Thus the coordinates $(\theta,x,y)$ are privileged with weights $(1,1,2)$ and the nilpotent approximation is given by Taylor polynomials of corresponding weighted order. Namely, coefficients of  a vector field approximating any of \eqref{fields} must be of order 0, 0 and 1,  respectively. In other words, the first two coefficients (the direction of the  distribution) must be constant while the third one must be linear (in $x$ or $\theta$). Hence we get $n_1=(1,0,0)$, $n_2=(0,1,\theta).$ The rest follows.
\end{proof}

In the Listing \ref{cod_nilp} we define nilpotent Lie algebra and discuss its properties.
\tiny 
\begin{lstlisting}[language=Mathematica ,caption={Nilpotent aproximation},label={cod_nilp}]
DGsetup([x, y, theta], N);
n1 := evalDG(D_theta);
n2 := evalDG(D_y*theta + D_x);
n3 := LieBracket(n1, n2);
NAlg := LieAlgebraData([n1, n2, n3]);
DGsetup(NAlg);
Query("Solvable");
Query("Nilpotent");
\end{lstlisting}
\normalsize

Our coordinates on the configuration space $M:=\mathbb S^1\times\mathbb{R}^2$ are actually so called \emph{canonical coordinates of the 2nd kind} for the nilpotent Lie algebra $\mathfrak m,$ and are given by the local isomorphism 
\begin{align}\label{2nd}
\Psi_2 : \mathbb R^3 \to M, \:\:\: \Psi_2(\theta,x,y)
=e^{\theta n_1}\circ e^{x n_2}\circ e^{y n_3} (q),
\end{align} 
where $e$ denotes the exponential mapping and $q$ is the point where the map is centered. Indeed, the nilpotent system \eqref{nilp} can be recovered from \eqref{2nd} by differentiation and solving 
\begin{align*}
\partial_\theta&=\Psi^{-1}_{2*}n_1,\\
\partial_x&=\Psi^{-1}_{2*}n_2-\theta\Psi^{-1}_{2*}n_3,\\
\partial_y&=\Psi^{-1}_{2*}n_3.
\end{align*}
Hence the nilpotent system of vectors \eqref{nilp} that we obtained from the nilpotent approximation procedure is the normal form for the generating family of the nonholonomic tangent space. These vector fields are also often called generators of the polarized Heisenberg group, \cite{ABB}. 

\begin{rem} 
Note that the generating family of the nonholonomic tangent space can have various forms. For example, the coordinates defined as
\begin{align}
\Psi_1 : \mathbb R^3 \to M, \:\:\: \Psi_1(\theta,x,y)
=e^{yn_3+ xn_2+ \theta n_1} (q),
\end{align}
are so called \emph{canonical coordinates of the 1st kind}. The generating family  reads as
\begin{align}
n_1= \partial_{\theta} - \frac{1}{2} x  \partial_{y} , \:\:\:\: n_2=\partial_{y} + \frac{1}{2} \theta \partial_{y}.
\end{align}
These vector fields are also often called the standard generators of the Heisenberg group.
The diffeomorphism  between these two coordinates is given by the change of variable $y\mapsto y+\frac12 \theta x.$
\end{rem}

The inverse of the map \eqref{2nd} also defines a group structure on $\mathbb R^3$ by taking compositions on $M$. 
This group is called \emph{nonsymmetric 3--dimensional Heisenberg group} or \emph{Heisenberg group}. The  group multiplication looks as follows.
\begin{lem*} \label{grupa}
The space  $\mathbb R^3$  together with 
the non-commutative group law 
\begin{align} \label{group}
(\theta, x,y) \circ (\tilde \theta, \tilde x,\tilde y)
=  (\theta + \tilde \theta, x + \tilde x, y +\tilde y +  \theta \tilde x ) 
\end{align} 
forms connected, simply connected, non--commutative Lie group $N$. 
The vector fields $n_1,n_2$ and $n_3$ are left invariant with respect to the Lie group law \eqref{group}.
\end{lem*}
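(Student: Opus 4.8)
The plan is to verify directly that $(\R^3,\circ)$ satisfies the group axioms, to observe that multiplication and inversion are polynomial (hence smooth, so that $N$ is a Lie group), to note that the underlying manifold is $\R^3$ (so $N$ is connected and simply connected), and to read noncommutativity off the formula. For the left-invariance of $n_1,n_2,n_3$ I would compute the differential of a left translation and push the values $n_i(e)$ at the identity forward to an arbitrary point.

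First I would record the neutral element and the inverses: $(0,0,0)$ is neutral, since the correction term $\theta\tilde x$ vanishes whenever either factor is $(0,0,0)$, and the inverse of $(\theta,x,y)$ is $(-\theta,-x,-y+\theta x)$, as a one-line check of the third coordinate on both sides confirms. Associativity is a polynomial identity: the first two coordinates are manifestly additive, and in both bracketings the third coordinate of the triple product equals
$$y_1+y_2+y_3+\theta_1x_2+\theta_1x_3+\theta_2x_3,$$
which is the only routine computation. Since $\circ\colon\R^3\times\R^3\to\R^3$ and inversion $\R^3\to\R^3$ are polynomial, they are smooth, so $N$ is a Lie group whose underlying manifold $\R^3$ is connected and simply connected. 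Comparing the third coordinates of $(\theta,x,y)\circ(\tilde\theta,\tilde x,\tilde y)$ and $(\tilde\theta,\tilde x,\tilde y)\circ(\theta,x,y)$ gives $\theta\tilde x$ versus $\tilde\theta x$, so $N$ is noncommutative.

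For left-invariance, fix $g=(a,b,c)$ and write the left translation as $L_g(\theta,x,y)=(a+\theta,\;b+x,\;c+y+ax)$. Its differential sends the coordinate frame to $(L_g)_*\partial_\theta=\partial_\theta$, $(L_g)_*\partial_x=\partial_x+a\,\partial_y$, $(L_g)_*\partial_y=\partial_y$. Since $n_1(e)=\partial_\theta$, $n_2(e)=\partial_x$ and $n_3(e)=\partial_y$ at $e=(0,0,0)$, pushing forward yields $(L_g)_*n_1(e)=\partial_\theta=n_1(g)$, $(L_g)_*n_2(e)=\partial_x+a\,\partial_y=n_2(g)$ (using that the $\theta$-coordinate of $g$ is $a$), and $(L_g)_*n_3(e)=\partial_y=n_3(g)$. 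Hence each $n_i$ coincides with the left-invariant extension of its value at the identity and is therefore left-invariant. Conceptually this is forced: the group law is precisely the transport of the composition on $M$ through the canonical coordinates of the 2nd kind $\Psi_2$ of \eqref{2nd}, whose generating family is $n_1,n_2,n_3$.

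There is essentially no serious obstacle here; the only steps requiring attention are the bookkeeping in the associativity identity and reading off the Jacobian of $L_g$ correctly in the pushforward. As an independent check one may verify that the left-invariant fields so obtained satisfy $[n_1,n_2]=n_3$ with all other brackets zero, matching the Lie algebra $\mathfrak m$ of Lemma \ref{NA}.
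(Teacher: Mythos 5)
Your proposal is correct and follows essentially the same route as the paper: the left-invariance is obtained exactly as in the paper's proof, by differentiating the group law \eqref{group} at zero in the tilde variables (equivalently, computing the differential of left translation) and recognizing $(1,0,0)$, $(0,1,\theta)$, $(0,0,1)$ as $n_1,n_2,n_3$. The only difference is that you write out the group axioms, smoothness, and the connected/simply connected/noncommutative claims explicitly, whereas the paper defers these routine checks to the reference \cite{calin}.
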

\begin{proof} Taking derivatives of \eqref{group} at zero with respect to $\tilde\theta,$ $\tilde x$ and $\tilde y$ we get $(1,0,0)$, $(0,1,\theta)$ and $ (0,0,1)$ respectively. For details see \cite{calin}. \end{proof}

Having an explicit formula for the group multiplication we easily find the corresponding right--invariant vector fields.
\begin{prop*} \label{pravo-inv}
The right--invariant vector fields with respect to the Lie group law \eqref{group} on $\mathbb R^3$ read
\begin{align}
\begin{split}
R_1 &= \partial_{\theta}   +x \partial_{y},\\
R_2 &= \partial_{x}, \\
R_{3} & = -[R_1,R_2] = \partial_y.
\end{split}
\end{align}
\end{prop*}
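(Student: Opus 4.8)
The plan is to produce each right-invariant field by transporting the corresponding generator of $\fm$ from the origin by right translations, mirroring the computation for the left-invariant fields in Lemma~\ref{grupa} but now differentiating the \emph{left} argument of the product \eqref{group}. Concretely, write $r_g$ for right multiplication by $g=(\theta,x,y)$, so that $r_g(\tilde\theta,\tilde x,\tilde y)=(\tilde\theta,\tilde x,\tilde y)\circ(\theta,x,y)$; a vector field $R$ is right-invariant precisely when $R(h\circ g)=(Tr_g)_h R(h)$ for all $g,h$, and then its value at $g$ is $(Tr_g)_e$ applied to $R(e)\in T_eN$. Using \eqref{group},
\[
(\tilde\theta,\tilde x,\tilde y)\circ(\theta,x,y)=(\tilde\theta+\theta,\ \tilde x+x,\ \tilde y+y+\tilde\theta x).
\]

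First I would differentiate the right-hand side at $(\tilde\theta,\tilde x,\tilde y)=(0,0,0)$ with respect to $\tilde\theta$, $\tilde x$ and $\tilde y$ in turn. The three partial derivatives are $(1,0,x)$, $(0,1,0)$ and $(0,0,1)$, which, read as tangent vectors at the point $(\theta,x,y)$, are exactly $R_1=\partial_\theta+x\partial_y$, $R_2=\partial_x$ and $\partial_y$. To be safe I would also confirm right-invariance directly from \eqref{group} by checking $(Tr_g)_h R_i(h)=R_i(h\circ g)$; since the group law is affine in each factor this is a one-line verification.

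Next I would compute the remaining bracket to pin down the last field: $[R_1,R_2]=[\partial_\theta+x\partial_y,\partial_x]=-(\partial_x x)\,\partial_y=-\partial_y$, hence $R_3=-[R_1,R_2]=\partial_y$, as claimed. This simultaneously records that the right-invariant fields span a Lie algebra anti-isomorphic to $\fm$, consistent with $[n_1,n_2]=n_3$ against $[R_1,R_2]=-R_3$.

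I expect no genuine obstacle here; the only point that needs care is the convention bookkeeping — differentiating the left slot of \eqref{group} rather than the right one (as was done for the left-invariant fields in Lemma~\ref{grupa}), together with the attendant sign change in the bracket. Everything else is a direct substitution into the group law \eqref{group}.
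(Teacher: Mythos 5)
Your computation is correct and is essentially the paper's own proof: the paper likewise obtains $R_1,R_2,R_3$ by differentiating the group law \eqref{group} in the left slot (at the identity), getting $(1,0,\tilde x)$, $(0,1,0)$, $(0,0,1)$, which is your calculation up to renaming the coordinates. Your extra verification of the bracket $[R_1,R_2]=-\partial_y$ is a harmless addition consistent with the stated $R_3=-[R_1,R_2]=\partial_y$.
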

\begin{proof}
Taking derivatives of \eqref{group} at zero with respect to $\theta,$ $x$ and $y$ we get $(1,0,\tilde x)$, $(0,1,0)$ and $ (0,0,1)$ respectively.
\end{proof}

In the Listing \ref{cod_group} we define Heisenberg Lie group and discuss  invariant vectors and forms. 

\tiny 
\begin{lstlisting}[language=Mathematica ,caption={Heisenberg Lie group}, label={cod_group}]
DGsetup([x1, x2, x3], L);
T := Transformation(L, L, [x1 = x1 + y1, x2 = x2 + y2, x3 = x1*y2 + x3 + y3]);
DGsetup([y1, y2, y3], G);
LG := LieGroup(T, G);
InvariantVectorsAndForms(LG);
\end{lstlisting}
\normalsize

\subsection{Formulation of the nilpotent problem and PMP} \label{pmp}
Let us now discuss the nilpotent control problem that approximates the problem from Section \ref{pmp-orig}. 
The vector fields $n_1, n_2$ generate the left-invariant horizontal distribution $\N$ on the Lie group $N$ from Lemma \ref{grupa} 
which then carries a canonical filtration with the growth $(2,3)$. 
Let us return back to the classical setting from Section \ref{definice} and in particular, consider the coordinates in the original order $(x,y,\theta)$. 
Then we define the sub--Riemannian metric $r$ in $\N=\langle n_1,n_2\rangle$ such that the fields $n_1$ and $n_2$ are orthonormal  with respect to $r$. Thus 
\begin{align} \label{metr}
r= \text{d}x^2 + \text{d}\theta^2.  
\end{align}

\begin{lem*}
The sub--Riemannian structure $(N,\N,r)$ is left-invariant with respect to the group structure \eqref{group}.
\end{lem*}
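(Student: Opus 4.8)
The statement asserts two things: that the distribution $\N$ is carried to itself by every left translation $\ell_g$ of $N$, and that the metric $r$ is preserved, so that each $\ell_g$ acts as an isometry of the resulting sub--Riemannian structure. The plan is to verify both directly from Lemma \ref{grupa} and the group law \eqref{group}.

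First I would treat the distribution. By Lemma \ref{grupa} the fields $n_1,n_2$ are left--invariant for \eqref{group}, i.e. $(\ell_g)_* n_i = n_i$ for all $g \in N$ and $i=1,2$. Since $\N_h = \langle n_1(h), n_2(h) \rangle$ at each point $h$, this gives $(\ell_g)_* \N_h = \langle n_1(gh), n_2(gh) \rangle = \N_{gh}$, so $\N$ is left--invariant and $\ell_g$ restricts to a linear isomorphism $\N_h \to \N_{gh}$.

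Next I would treat the metric, for which there are two equivalent routes. The abstract one: $r$ is defined by declaring $n_1,n_2$ to be an orthonormal frame of $\N$ at every point, and since $\ell_g$ sends the orthonormal frame $\{n_1(h), n_2(h)\}$ to the orthonormal frame $\{n_1(gh), n_2(gh)\}$, it carries orthonormal bases to orthonormal bases and is therefore a fibrewise linear isometry, i.e. $\ell_g^* r = r$ on $\N$. The concrete one, matching \eqref{metr}: from \eqref{group} the left translation by $g=(\theta_0,x_0,y_0)$ is $(\theta,x,y)\mapsto(\theta_0+\theta,\,x_0+x,\,y_0+y+\theta_0 x)$, whence $\ell_g^*\text{d}\theta = \text{d}\theta$ and $\ell_g^*\text{d}x = \text{d}x$; thus the symmetric $2$--tensor $\text{d}x^2 + \text{d}\theta^2$ on $N$ is itself left--invariant, and since its kernel $\langle \partial_y \rangle = \langle n_3 \rangle$ is transverse to $\N$, its restriction to $\N$ is precisely $r$, which is consequently left--invariant.

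Putting the two parts together, every $\ell_g$ preserves $\N$ and restricts to an isometry $(\N_h,r_h) \to (\N_{gh},r_{gh})$, which is exactly the assertion. I do not expect a genuine obstacle here; the only subtlety worth flagging is that $r$ lives only on $\N$ (the tensor $\text{d}x^2 + \text{d}\theta^2$ is degenerate on all of $TN$), so ``left--invariance'' should be understood as $\ell_g$ acting by isometries of the sub--Riemannian structure — equivalently, preservation of the restricted metric $r|_\N$ — which is what the argument above delivers.
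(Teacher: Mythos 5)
Your argument is correct, and it is exactly the justification the paper leaves implicit: the lemma is stated there without proof, being regarded as an immediate consequence of Lemma \ref{grupa} (left-invariance of $n_1,n_2$) together with the definition of $r$ by declaring $n_1,n_2$ orthonormal. Both your frame-based argument and the coordinate check $\ell_g^*\mathrm{d}\theta=\mathrm{d}\theta$, $\ell_g^*\mathrm{d}x=\mathrm{d}x$ are accurate, so there is nothing to add.
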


In the Listing \ref{cod_met} we define  sub--Riemannian control metric $r$. 

\tiny 
\begin{lstlisting}[language=Mathematica ,caption={Sub--Riemannian metrics},label={cod_met}]
ChangeFrame(N);
ndb := DualBasis([n1, n2, n3]);
r := evalDG((ndb[1] &t ndb[1]) + (ndb[2] &t ndb[2]));
\end{lstlisting}
\normalsize

Consider two points $k_1, k_2 \in N$. Among all admissible curves $c(t)$, we would like to find length minimizers with respect to $r$. 
Thus we will study the following optimal control problem 
\begin{align} \label{control_nil}
\dot c(t)=u_1n_1+u_2n_2 =u_1 \left(
\begin{array}{c}
0\\
0\\
1\\
\end{array}
\right) 
+u_2 \left(
\begin{array}{c}
1\\
\theta\\
0\\
\end{array}
\right)
\end{align}
for   $c$ in $N$ and the control $u=(u_1,u_2)$ with the boundary condition $c(0)=k_1,$ $c(T)=k_2$,
where we minimize ${1 \over 2}\int_0^T (u_1^2+u_2^2) dt$. 

We use Hamiltonian viewpoint and Pontryagin's maximum principle to stu\text{d}y the problem \eqref{control_nil}. 
Since the control system is left-invariant, we use methods introduced in Section \ref{pmp-orig} to find the system giving local extremals.
We consider the cotangent bundle $T^*N \to N$ trivialized by duals of $n_1, n_2, n_3$ and use corresponding vertical coordinates $h_1, h_2, h_3$ and horizontal coordinates $x,y, \theta$. 

\begin{prop*} \label{sys}
Normal extremals of the approximation are solutions of the system 
\begin{align} \label{ver_nil}
&\dot h_1= -h_3h_2, \ \ \ \ 
\dot h_2= h_3h_1, \ \ \ \ 
\dot h_3=0,\\ \label{hor_nil}
&\dot x=h_2,\ \ \ \ \ \ \ \ \ \ 
\dot y=\theta h_2,\ \ \ \ \ \ \ \ \ 
\dot \theta = h_1,
\end{align}
where \eqref{ver_nil} is the vertical system and \eqref{hor_nil} is the horizontal system.
\end{prop*}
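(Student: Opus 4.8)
The plan is to run the argument of the proof of Proposition~\ref{extr-orig} again, now on the nilpotent group $N$ with its Lie algebra $\fm$ in place of $K$ and $\fk$. Since the control system \eqref{control_nil} is left-invariant, the entire apparatus of Section~\ref{pmp-orig} transfers without change: by Lemma~\ref{1.2} the vertical coordinates $h_1,h_2,h_3$ are left-invariant functions on $T^*N$ satisfying $\{h_i,h_j\}=\langle[e_i,e_j],\xi\rangle$, and PMP together with the maximality condition $\partial H/\partial u_i=h_i-u_i=0$ gives $u_i=h_i$, so the normal Hamiltonian is the left-invariant function $H=\tfrac12(h_1^2+h_2^2)$ and the Hamiltonian system of PMP is \eqref{ver_obec}--\eqref{hor_obec}, with the vertical part expressed through the adjoint action as in \eqref{ad}.

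The only thing left is to evaluate \eqref{ad} for $\fm$. From $H=\tfrac12(h_1^2+h_2^2)$ one has $dH=h_1\,dh_1+h_2\,dh_2$, hence $d\HH=h_1e_1+h_2e_2$, where $e_i$ correspond to the left-invariant fields $n_i$. By Lemma~\ref{NA} the only non-trivial bracket in $\fm$ is $[e_1,e_2]=e_3$ and $e_3$ spans the centre $\fg^{-2}$, so $[e_1,e_3]=[e_2,e_3]=0$. Therefore $(\ad d\HH)e_1=[h_1e_1+h_2e_2,e_1]=-h_2e_3$, $(\ad d\HH)e_2=h_1e_3$, and $(\ad d\HH)e_3=0$, i.e. $\ad(h_1e_1+h_2e_2)$ is represented in the basis $e_1,e_2,e_3$ by $\left(\begin{smallmatrix}0&0&0\\0&0&0\\-h_2&h_1&0\end{smallmatrix}\right)$. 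Pairing with $\xi=h_1e_1^*+h_2e_2^*+h_3e_3^*$ as in \eqref{ad} then yields $\dot h_1=-h_2h_3$, $\dot h_2=h_1h_3$, $\dot h_3=0$, which is \eqref{ver_nil}. For the horizontal part, \eqref{hor_obec} reads $\dot c=h_1n_1+h_2n_2$, and since in the coordinates $(x,y,\theta)$ one has $n_1=\partial_\theta$ and $n_2=\partial_x+\theta\partial_y$ by \eqref{nilp}, this is exactly $\dot x=h_2$, $\dot y=\theta h_2$, $\dot\theta=h_1$, i.e. \eqref{hor_nil}.

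I do not expect a real obstacle: given Proposition~\ref{extr-orig}, the computation is routine bookkeeping. The one conceptual point to emphasize is the difference from the non-nilpotent original system: because $\fm$ is stratified and $n_3$ is central, the $\ad$-matrix loses the entry that produced the coupling term $\dot h_3=-h_1h_2$ in \eqref{ver}, so here $h_3$ becomes a constant of motion — precisely the feature that will make \eqref{ver_nil}, and hence the whole system, explicitly integrable later. One should only be careful, as flagged just before \eqref{metr}, that the coordinate order has been switched back to $(x,y,\theta)$, so that $n_1,n_2$ and the covector $\xi$ are written in matching bases.
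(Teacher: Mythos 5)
Your proposal is correct and follows essentially the same route as the paper: the paper's proof also transfers the left-invariant PMP machinery of Section \ref{pmp-orig} to $(N,\N,r)$, takes $H=\tfrac12(h_1^2+h_2^2)$, computes the adjoint matrix $\left(\begin{smallmatrix}0&0&0\\0&0&0\\-h_2&h_1&0\end{smallmatrix}\right)$ for $\ad(h_1e_1+h_2e_2)$ in $\fm$, and reads off the vertical system, with the horizontal part coming directly from the form of $n_1,n_2$. Your explicit bracket computation and the remark that centrality of $n_3$ makes $h_3$ a constant of motion are consistent with, and slightly more detailed than, the paper's argument.
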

\begin{proof}
Analogously to the proof of Proposition \ref{extr-orig}, the normal Hamiltonian of the maximum principle is  
$H={1 \over 2}(h_1^2+h_2^2)$, and then $dH=h_1 dh_1+h_2 dh_2$ and $d\HH=h_1 e_1+h_2 e_2$, where $e_i \in \fm$ correspond to fields $n_i$. Direct computation gives the adjoint action $\ad(h_1 e_1+h_2 e_2)$ by the matrix 
$\left( \begin{smallmatrix} 
0 & 0 & 0\\
0 & 0 & 0 \\
-h_2 & h_1 & 0
\end{smallmatrix} \right)$. Then we read of the explicit form of the system \eqref{ver} from the action of this matrix. The explicit form of the horizontal system \eqref{hor} follows from the form of the generators $n_1$, $n_2$ of $\N$.
\end{proof}

In the Listing  \ref{cod_Adj} we compute the adjoint action of Heisenberg Lie algebra 
on three--dimensional vector space $VS$. 
\tiny 
\begin{lstlisting}[language=Mathematica ,caption={Adjoint representation},label={cod_Adj}]
DGsetup(NAlg);
LieAlgebras:-Adjoint(NAlg, representationspace = VS);
LieAlgebras:-Adjoint(e1*h1 + e2*h2);
ADJ := LieAlgebras:-Adjoint(e1*h1(t) + e2*h2(t));
\end{lstlisting}
\normalsize

\subsection{Solutions of the system}
The system from Proposition \ref{sys} can be solved explicitly. Let us start with the vertical system \eqref{ver_nil}, which is independent of the horizontal part.
\begin{prop*} \label{solution} In the generic case $h_3 \neq 0$, 
the system \eqref{ver_nil} has the solution  
\begin{align} \label{vert_sol}
\begin{split} 
h_1&=C_2\sin(C_1t)+C_3\cos(C_1t), \\
h_2&=C_3\sin(C_1t)-C_2\cos(C_1t), \\
h_3&=C_1
\end{split} 
\end{align}
for constants $C_1,C_2,C_3$. In the case $h_3=0$ we get $h_1=C_1$ and $h_2=C_2$ for constants $C_1, C_2$.
\end{prop*}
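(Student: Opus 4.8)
The plan is to integrate the vertical system \eqref{ver_nil} by hand, using that it decouples from the horizontal variables and that its own right--hand side becomes linear once one trivial equation is solved. First I would note that $\dot h_3=0$ forces $h_3\equiv C_1$ for a constant $C_1$, and substituting this into the first two equations turns them into the planar linear system with constant coefficients
\[
\begin{pmatrix}\dot h_1\\ \dot h_2\end{pmatrix}
= C_1\begin{pmatrix}0&-1\\ 1&0\end{pmatrix}\begin{pmatrix}h_1\\ h_2\end{pmatrix},
\]
whose coefficient matrix is $C_1$ times the infinitesimal generator of planar rotations.

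In the generic case $C_1\neq 0$ I would solve this in one of two equivalent ways: either introduce the complex coordinate $z=h_1+ih_2$, observe that $\dot z=iC_1 z$, and conclude $z(t)=e^{iC_1 t}z(0)$; or differentiate once more to obtain $\ddot h_1=-C_1\dot h_2=-C_1^2 h_1$, recognise the harmonic--oscillator equation, write $h_1=C_2\sin(C_1 t)+C_3\cos(C_1 t)$, and recover $h_2=-\dot h_1/C_1=C_3\sin(C_1 t)-C_2\cos(C_1 t)$. This is exactly \eqref{vert_sol}. Since the triple $(C_1,C_2,C_3)$ is in bijection with the initial data $(h_3,h_1(0),h_2(0))$, the formula is the general solution and not merely a special family --- this bookkeeping is really the only point that needs care. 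In the degenerate case $C_1=0$ the remaining equations read $\dot h_1=\dot h_2=0$, so $h_1$ and $h_2$ are arbitrary constants, which is the second assertion (up to the relabelling of constants used in the statement).

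Finally, as a sanity check I would verify that $H=\tfrac12(h_1^2+h_2^2)$ is a first integral, since $\tfrac{d}{dt}(h_1^2+h_2^2)=2h_3(-h_1 h_2+h_2 h_1)=0$; this is forced by Proposition \ref{sys}, where $H$ is the normal Hamiltonian, and in \eqref{vert_sol} it reduces to the identity $h_1^2+h_2^2=C_2^2+C_3^2$. There is no real obstacle in this proof: the content is an elementary ODE integration, and the write--up amounts to carrying out the substitution cleanly and noting that the three constants are free.
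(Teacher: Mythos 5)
Your proposal is correct and follows essentially the same route as the paper: solve $\dot h_3=0$ to get $h_3\equiv C_1$, reduce the remaining equations to the constant-coefficient planar rotation system, and integrate it (the paper does this via eigenvalues/eigenvectors, you via the complex exponential or harmonic-oscillator form, which is the same elementary computation). The extra remarks on the bijection with initial data and on $h_1^2+h_2^2$ being a first integral are fine but not needed.
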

\begin{proof}
The equation $\dot h_3=0$ implies $h_3=C_1$ for some constant $C_1$. If $C_1=0$ then $h_2=C_2$ and $h_1=C_3$ for constants $C_2, C_3$.
If $C_1 \neq 0$, then we get a system 
$$
\left( \begin{smallmatrix} 
\dot{h}_1\\
\dot{h}_2
\end{smallmatrix} \right)=
\left( \begin{smallmatrix} 
0 &-C_1 \\
C_1 & 0
\end{smallmatrix} \right)
\left( \begin{smallmatrix} 
h_1\\
h_2
\end{smallmatrix} \right)
$$
with constant coefficients and we get the solution \eqref{vert_sol} by the discussion of eigenvalues and eigenvectors of corresponding matrix.
\end{proof}

The horizontal system then can be solved by direct integration. Moreover, it is sufficient to solve the system with the initial condition $x(0)=y(0)=\theta(0)=0$, because we get solutions with different initial conditions using the group structure of $N$. We discuss this in detail in Section \ref{nilp_sym}. Altogether, we get the following statement.

\begin{prop*} \label{prop_hor}
In the case $h_3 \neq 0$, the horizontal system \eqref{hor_nil} has solutions satisfying 
$x(0)=y(0)=\theta(0)=0$   
\begin{align} \label{sol_nil}
\begin{split}
x &= {1 \over C_1} \big (C_3-C_2\sin(C_1t)-C_3\cos(C_1t) \big ),\\
y &= 
{1 \over 4C_1^2} \big (
2C_1(C_2^2+C_3^2)t-4C_2C_3\cos(C_1t)+2C_2C_3\cos(2C_1t)\\
&-4C_2^2\sin(C_1t)+(C_2^2-C_3^2)\sin(2C_1t)+2C_2C_3 \big ),\\
\theta &= {1 \over C_1} \big (C_2 -C_2\cos(C_1t)+C_3\sin(C_1t) \big )
\end{split}
\end{align}
for constants $C_1$, $C_2$, $C_3$ from Proposition \ref{solution}.
In the degenerate case $h_3=0$ we get $x = C_2t$, $y={1 \over 2} C_2 C_1 t^2$, $\theta = C_1t$ for $C_1$, $C_2$ from Proposition \ref{solution}.
\end{prop*}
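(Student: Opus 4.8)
The plan is to solve \eqref{hor_nil} by direct integration, treating the two cases of Proposition \ref{solution} separately and, within each case, integrating in the order $\theta$, then $x$, then $y$. The point is that the right--hand sides of $\dot\theta=h_1$ and $\dot x=h_2$ involve only the vertical coordinates already determined in \eqref{vert_sol}, whereas $\dot y=\theta h_2$ additionally needs $\theta$, so this ordering turns every equation into a pure quadrature.

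In the generic case $h_3\neq0$, substitute $h_1=C_2\sin(C_1t)+C_3\cos(C_1t)$ into $\dot\theta=h_1$ and integrate; imposing $\theta(0)=0$ fixes the constant of integration and produces the stated formula for $\theta$. The same one--line computation applied to $\dot x=h_2=C_3\sin(C_1t)-C_2\cos(C_1t)$ with $x(0)=0$ gives the formula for $x$. These two steps are entirely routine.

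The last equation is where the work lies. I would insert the expression for $\theta$ just found, together with $h_2$, into $\dot y=\theta h_2$; this yields a product of the schematic form $(\mathrm{const}+\sin+\cos)(\sin-\cos)$ in $C_1t$, which I would expand and then reduce with the product--to--sum identities $2\sin a\cos a=\sin 2a$, $2\cos^2 a=1+\cos 2a$, $2\sin^2 a=1-\cos 2a$, so that every summand is a constant or a multiple of one of $\cos(C_1t)$, $\sin(C_1t)$, $\cos(2C_1t)$, $\sin(2C_1t)$. Integrating term by term is then immediate, and $y(0)=0$ pins down the single remaining additive constant; putting everything over the common denominator $4C_1^2$ reproduces the claimed formula for $y$. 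I do not expect any conceptual difficulty here, only careful bookkeeping of the trigonometric coefficients and of the constant — precisely the kind of step the accompanying Maple computation is meant to corroborate.

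For the degenerate case $h_3=0$, Proposition \ref{solution} gives $h_1=C_1$ and $h_2=C_2$ constant, so $\dot\theta=C_1$ and $\dot x=C_2$ integrate with zero initial data to $\theta=C_1t$ and $x=C_2t$, and then $\dot y=\theta h_2=C_1C_2t$ integrates to $y=\tfrac12 C_1C_2t^2$. Finally, restricting to the initial condition $x(0)=y(0)=\theta(0)=0$ costs nothing: left translation by a group element of $N$, using the law \eqref{group}, sends admissible extremal trajectories to admissible extremal trajectories, so every solution is obtained from one through the origin — a remark to be expanded in Section \ref{nilp_sym}.
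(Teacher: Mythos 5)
Your proposal is correct and follows essentially the same route as the paper, which states that the horizontal system "can be solved by direct integration" (with the integrations delegated to the Maple listings and the shift of initial conditions handled by the group structure, exactly as in your closing remark). Your explicit quadratures in the order $\theta$, $x$, $y$ with the product-to-sum reduction reproduce \eqref{sol_nil}, so the only difference is that you carry out by hand what the paper checks symbolically.
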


In the Listing \ref{cod_vert} we find the system of the approximation and its solution.
\tiny 
\begin{lstlisting}[language=Mathematica ,caption ={Approximated system},   label ={cod_vert}]
SV := map(diff, Matrix([h1(t), h2(t), h3(t)]), t) -
MatrixMatrixMultiply(Matrix([h1(t), h2(t), h3(t)]), ADJ)
sys_vert := {SV[1, 1], SV[1, 2], SV[1, 3]};
sol_vert := pdsolve(sys_vert);
pdetest(sol_vert[1], sys_vert);
pdetest(sol_vert[2], sys_vert);
GC := subs(x = x(t), y = y(t), theta = theta(t), 
GetComponents(evalDG(h1(t)*n1 + h2(t)*n2), DGinfo("FrameBaseVectors")));
# The order of variables can differ in diferent versions of Maple. Maple 2019 
# follows alphabetical order and which is reflected in sys.  
sys := {op(sys_vert), diff(theta(t), t) = GC[3], 
diff(x(t), t) = GC[1], diff(y(t), t) = GC[2]};
sol := pdsolve(sys);
pdetest(sol[1], sys);
pdetest(sol[2], sys);
\end{lstlisting}
\normalsize

In the Listing \ref{cod_poc} we find the solution of the system satisfying the initial condition from Proposition \ref{prop_hor}. We apply this only on the second solution set (which is non--degenerate one in the case of Maple 2019). 

\tiny 
\begin{lstlisting}[language=Mathematica ,caption ={Initial condition},   label ={cod_poc}]
sol2 := [sol[2][4], sol[2][5], sol[2][6]];
poc2 := eval(subs(t = 0, sol2));
psol2 := solve({rhs(poc2[1]), rhs(poc2[2]), rhs(poc2[3])});
sol20 := simplify(subs(psol2, sol2));
eval(subs(t = 0, sol20));
\end{lstlisting}
\normalsize

Finally, different extremals can project to the same unparametrized local minimizers in $N$. Let us consider only curves with fixed parametrization.  
Local minimizers  with the same parametrizations correspond to level-sets of the normal Hamiltonian, \cite{ABB}, and local minimizers parametrized by arc-length are exactly projections of extremals satisfying the condition $h_1^2+h_2^2=1$. This condition takes the form $C_1^2+C_2^2=1$ in the case $h_3=0$ and $C_2^2+C_3^2=1$ in the  case  $h_3 \neq 0$. In the next, we will focus on the generic solutions \eqref{sol_nil}.

\subsection{Examples and comparison to the original system}
Let us demonstrate on examples that solutions of the nilpotent system approximate the solutions of the original system in the neighbourhood of the origin.
In the following examples, we will solve the original system numerically in Maple and we will compare the solutions with solutions of the nilpotent system with the same initial condition.

\begin{example} \label{ex1}
Let us consider the initial condition $x(0)=y(0)=\theta(0)=0$, $h_1(0)={1 \over 2}$, $h_2(0)={\sqrt{3} \over 2}$, $h_3(0)=2$.
The nilpotent system has the solution    
$h_1 ={1 \over 2}\cos(2t)-{\sqrt{3} \over 2}\sin(2t)$,
$ h_2 = {1\over 2}\sin(2t)+{\sqrt{3}\over 2}\cos(2t)$,
 $h_3 = 2$, 
$ x ={1 \over 4}+{\sqrt{3} \over 4}\sin(2t)-{1\over 4}\cos(2t)$, 
$y = -{\sqrt{3}\over 16}\cos^2(2t)+{\sqrt{3}\over 16}\cos(2t)+{1\over 16}\cos(2t)\sin(2t)-{3\over 16}\sin(2t)+{t\over 4}$, 
$
\theta = {\sqrt{3}\over 4}\cos(2t)+{1\over 4}\sin(2t)-{\sqrt{3}\over 4}$.

For numeric solutions of the original system with the same initial condition Maple generically uses Runge-Kutta Fehlberg method and we compute the solution on the interval $[0,\pi]$ with the step ${\pi \over 100}$. We display the local minimizers for both systems in Figure \ref{disk1a}, where the numeric solution of the original system is blue (dot line), and the solution of the nilpotent system is red (solid line). 
For better mechanical illustration we present the angular and plane movements in Figures   \ref{disk1b} and \ref{disk1c}.
\end{example}

\begin{figure}[h] 
	\subfigure[Local mimimizer]{\includegraphics[width=0.3 \textwidth]{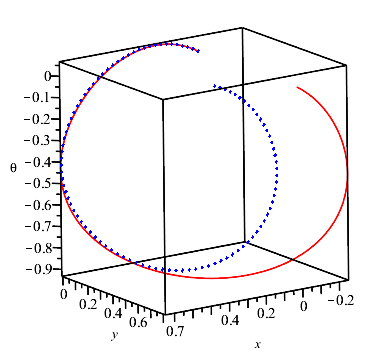} 
	\label{disk1a}}
	\subfigure[The parameter $\theta$]{\includegraphics[width=0.3
	\textwidth]{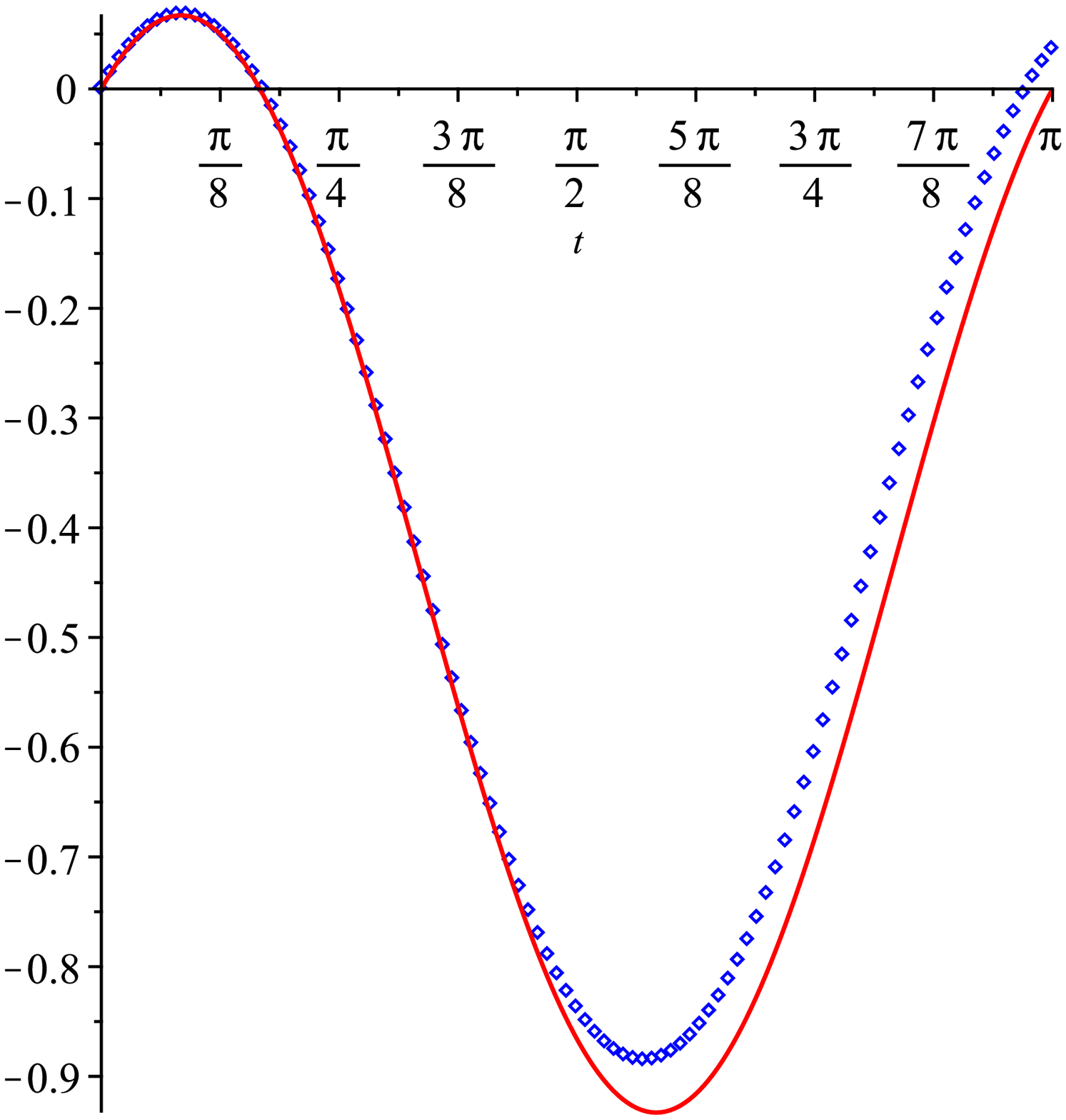} \label{disk1b}}
	\subfigure[Trajectory of the contact pont in the plane $(x,y)$]{\includegraphics[width=0.3
	\textwidth]{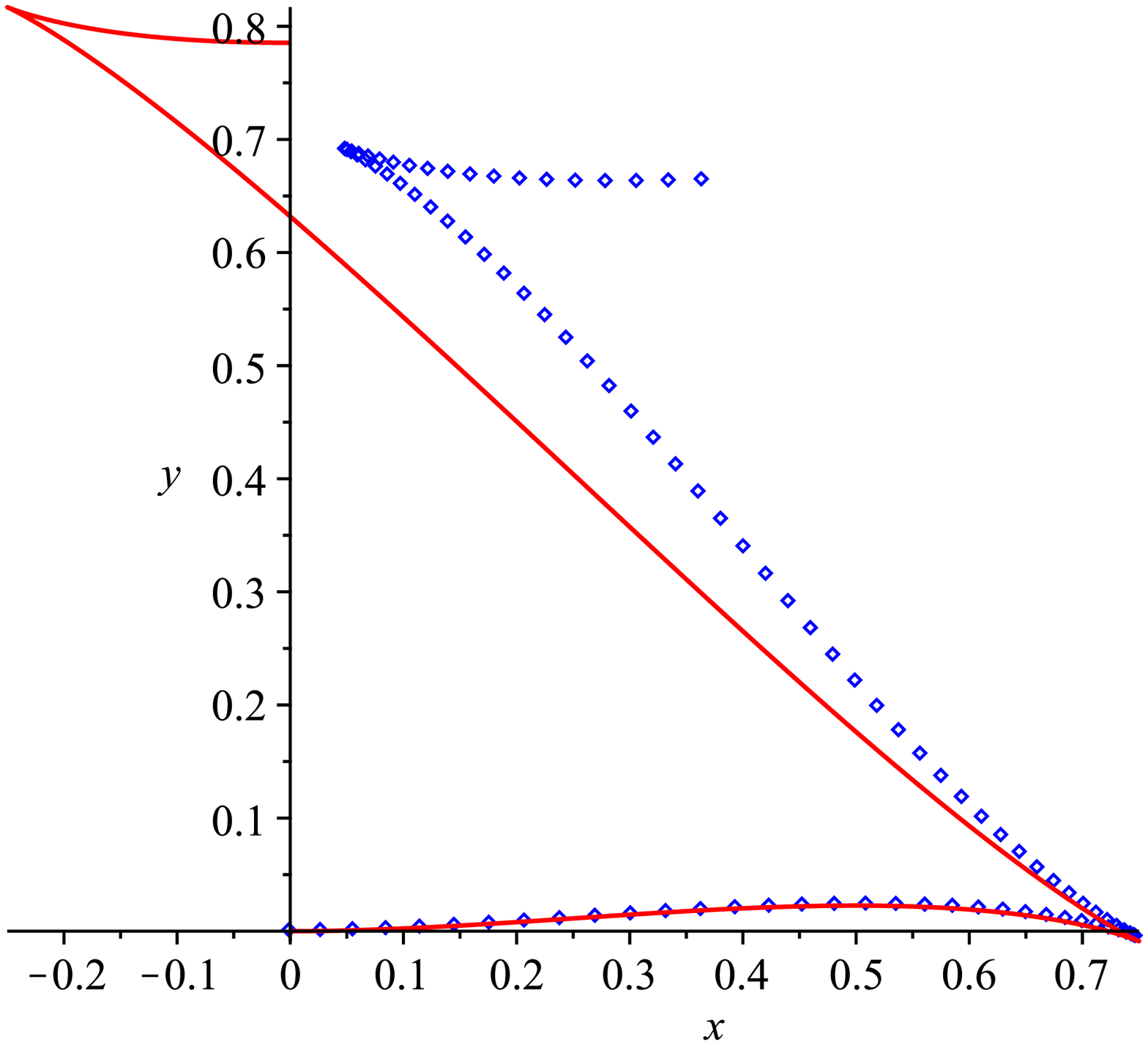} \label{disk1c}}
	\caption{Comparing of the analytic solution of approximation and the numeric solution of the original system: Example 1}
	\label{disk1}
\end{figure}

\begin{example}
Let us now consider the initial condition $x(0)=y(0)=\theta(0)=0$, $h_1(0)={1 \over 2}$, $h_2(0)={\sqrt{3} \over 2}$, $h_3(0)=20$.
The nilpotent system has the solution    
$h_1 ={1 \over 2}\cos(20t)-{\sqrt{3} \over 2}\sin(20t)$,
$ h_2 = {1\over 2}\sin(20t)+{\sqrt{3}\over 2}\cos(20t)$,
 $h_3 = 20$, 
$ x ={1 \over 40}+{\sqrt{3} \over 40}\sin(20t)-{1\over 40}\cos(20t)$, 
$y = -{\sqrt{3}\over 1600}\cos^2(20t)+{\sqrt{3}\over 1600}\cos(20t)+{1\over 1600}\cos(20t)\sin(20t)-{3\over 1600}\sin(20t)+{t\over 40}$, 
$
\theta = {\sqrt{3}\over 40}\cos(20t)+{1\over 40}\sin(20t)-{\sqrt{3}\over 40}$.

For numeric solutions of the original system with the same initial condition, we use Maple with the step ${\pi \over 1000}$ and we compute it on the interval $[0,\frac{\pi}{10}]$.  We display the local minimizers for both systems in Figure \ref{disk2a} and  we present the angular  and plane  movements in Figures   \ref{disk2b} and \ref{disk2c}.

\end{example}

\begin{figure}[h] 
	\subfigure[Local mimimizer]{\includegraphics[width=0.26\textwidth]{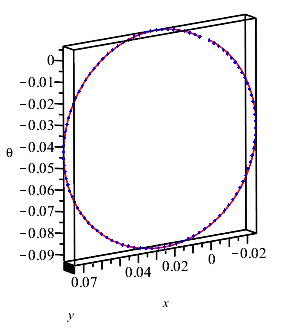} \label{disk2a}}
	\subfigure[The parameter $\theta$]{\includegraphics[width=0.3\textwidth]{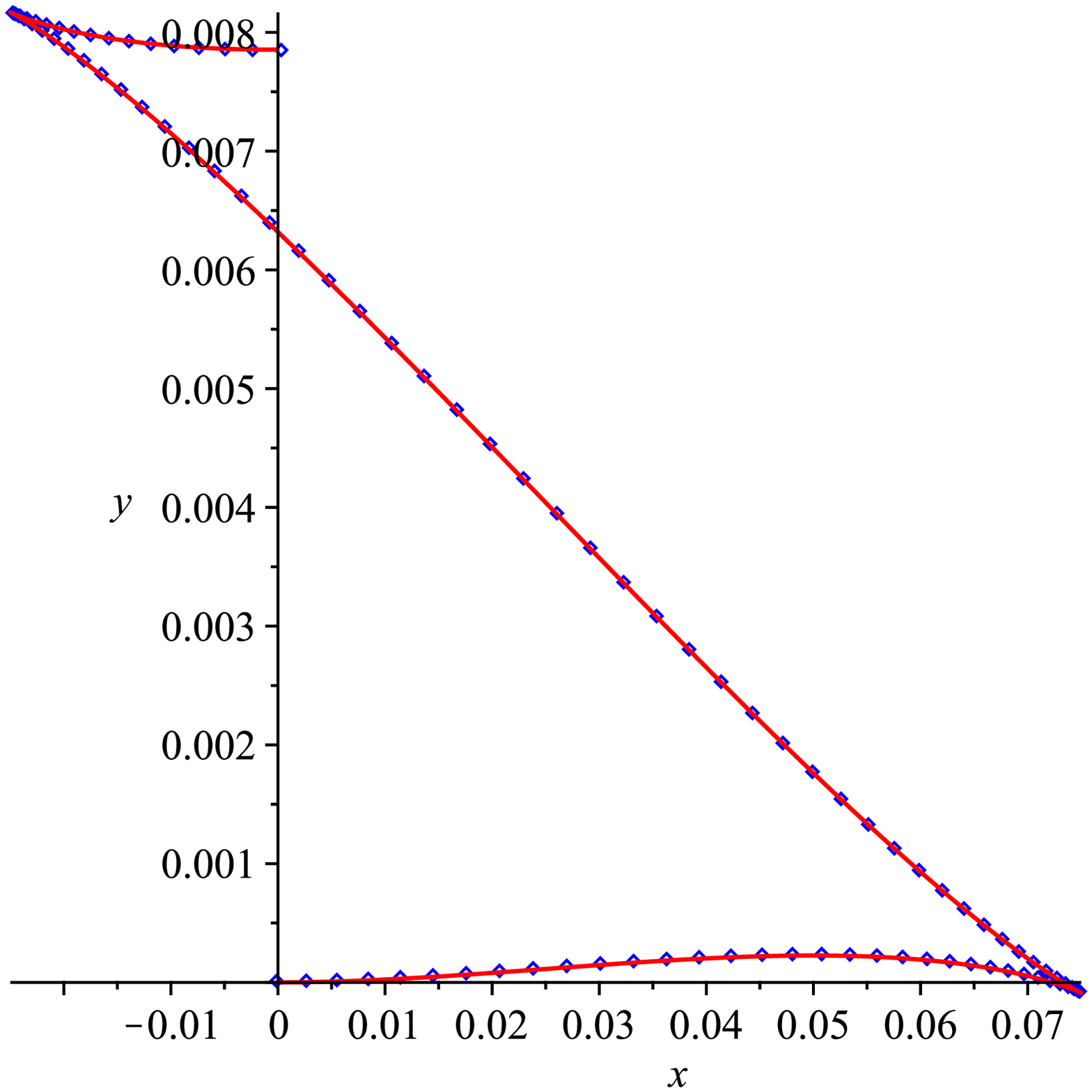}\label{disk2b}}
	\subfigure[Trajectory of the contact pont in the plane $(x,y)$]{\includegraphics[width=0.3
	\textwidth]{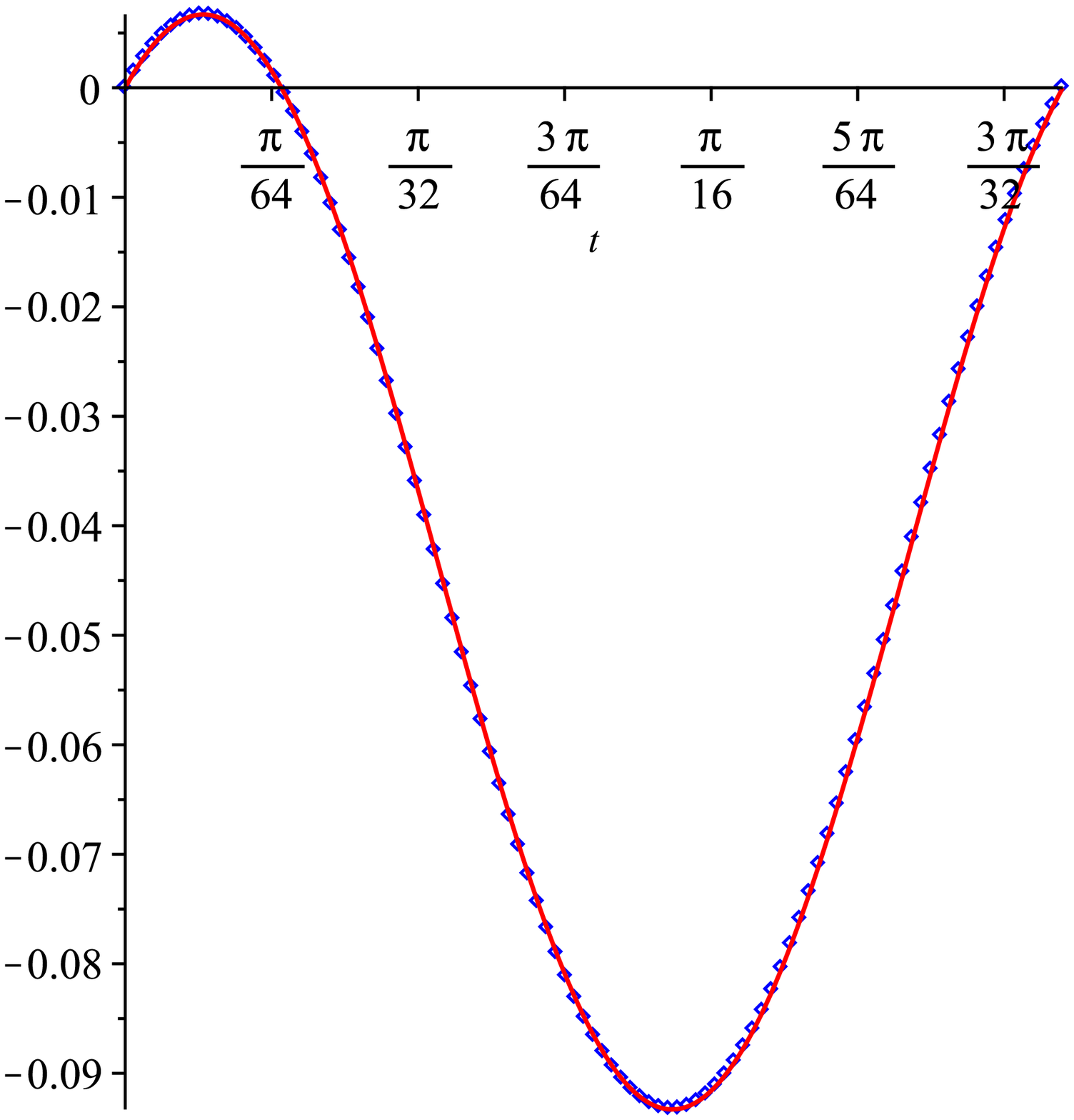} \label{disk2c}}
	\caption{Comparing of the analytic solution of approximation and the numeric solution of the original system: Example 2}
	\label{disk2}
\end{figure}

Let us note that our choice is such that in both cases, we display one period of the graphs and we have evenly distributed $100$ points in the interval.

\section{ Symmetries}  \label{nilp_sym}

Let us discuss here infinitesimal symmetries of the control structures. By an \emph{infinitesimal symmetry} we mean a vector field such that its flow is a symmetry of the geometric structure at all times.
\subsection{Symmetries of the control system} We can find explicitly all infinitesimal  symmetries of the sub--Riemmanian structure $(N,\N,r)$. Indeed, we are interested in vector fields $v$ such that $\mathcal{L}_v(\N) \subset \N$ and $\mathcal{L}_v(r)=0$. This gives a system of pde's that can be solved explicitly in the case of the left-invariant nilpotent structure.

\begin{prop*} \label{sym_met}
Infinitesimal symmetries of the left-invariant sub--Riemmanian structure $(N,\N,r)$ form a Lie algebra generated (over $\R$) by vector fields 
\begin{align} \label{subalg}
\begin{split}
&t_0:=\theta\del_x+{\theta^2-x^2 \over 2}\del_y-x\del_\theta,\\
&t_1 :=\del_x,\\
&t_2 :=x\del_y+\del_\theta,\\
&t_3 := \del_y.           
\end{split}
\end{align}
In particular, $t_0$ generates the isotropic subalgebra at $(0,0,0)$. Fields $t_i$, $i=1,2,3$ are translations that reflect the Heisenberg structure and coincide with the right-invariant fields, see Proposition \ref{pravo-inv}.
\end{prop*}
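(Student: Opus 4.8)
The plan is to translate the two defining conditions, $\mathcal L_v\N\subset\N$ and $\mathcal L_v r=0$, into an overdetermined system of linear PDEs for the components of $v$ in the left-invariant frame, solve that system by successive integration, and identify the resulting four-dimensional solution space with the span of $t_0,\dots,t_3$. Concretely: write the unknown vector field as $v=\alpha n_1+\beta n_2+\gamma n_3$ with $\alpha,\beta,\gamma\in C^\infty(N)$. Since $\N=\langle n_1,n_2\rangle$ has corank one and $n_3$ completes it to a frame, $\mathcal L_v\N\subset\N$ holds precisely when $[v,n_1]$ and $[v,n_2]$ have vanishing $n_3$-component. Expanding the brackets by the Leibniz rule and using the structure relations $[n_1,n_2]=n_3$, $[n_1,n_3]=[n_2,n_3]=0$ of Lemma~\ref{NA}, this gives
\begin{align*}
n_1\gamma=-\beta,\qquad n_2\gamma=\alpha .
\end{align*}
The same computation shows the $\N$-parts of $[v,n_1]$ and $[v,n_2]$ are $-(n_1\alpha)n_1-(n_1\beta)n_2$ and $-(n_2\alpha)n_1-(n_2\beta)n_2$; since $r(n_i,n_j)=\delta_{ij}$ is constant, evaluating $(\mathcal L_v r)(n_i,n_j)=-r([v,n_i],n_j)-r(n_i,[v,n_j])$ for $(i,j)=(1,1),(2,2),(1,2)$ yields
\begin{align*}
n_1\alpha=0,\qquad n_2\beta=0,\qquad n_1\beta+n_2\alpha=0 .
\end{align*}

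Next I would substitute the explicit coordinate expressions $n_1=\del_\theta$, $n_2=\del_x+\theta\del_y$, $n_3=\del_y$ and integrate. From $n_1\alpha=0$ one gets $\alpha=\alpha(x,y)$; integrating $\del_\theta\beta=-(\del_x+\theta\del_y)\alpha$ in $\theta$ expresses $\beta$ through $\alpha$ and an integration function $\beta_0(x,y)$; imposing $n_2\beta=0$ and matching coefficients of powers of $\theta$ forces $\del_y^2\alpha=\del_x\del_y\alpha=0$ and $\del_x^2\alpha$ constant, so that $\alpha$ is a quadratic-in-$x$ polynomial plus a term linear in $y$, and $\beta_0$ is affine, with only a handful of free constants. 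Then $\gamma$ is obtained by integrating $n_1\gamma=-\beta$ in $\theta$; the remaining equation $n_2\gamma=\alpha$ is automatically compatible with the first one because $n_1(n_2\gamma)-n_2(n_1\gamma)=[n_1,n_2]\gamma=n_3\gamma$, whose left side vanishes by $n_1\alpha=n_2\beta=0$, and comparing powers of $\theta$ in it eliminates both the quadratic and the $y$-dependent parts of $\alpha$. What survives is an exactly four-parameter family of triples $(\alpha,\beta,\gamma)$: the two coefficients of the (now affine-in-$x$) $\alpha$, the constant in $\beta$, and the integration constant of $\gamma$.

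Re-expressing the four basis solutions $v=\alpha n_1+\beta n_2+\gamma n_3$ in the coordinate frame produces exactly the fields $t_0,t_1,t_2,t_3$ of \eqref{subalg}. One then checks directly that $t_1=\del_x$, $t_2=\del_\theta+x\del_y$, $t_3=\del_y$ coincide with the right-invariant fields $R_2,R_1,R_3$ of Proposition~\ref{pravo-inv}; this is no coincidence, since right translations preserve any left-invariant structure, so these three are infinitesimal symmetries a priori (one could instead take this as input and only solve the PDE system modulo their span). The remaining field $t_0$ vanishes at $(0,0,0)$, hence lies in the isotropy subalgebra there; as evaluation at the origin maps the four-dimensional solution space onto $T_{(0,0,0)}N$ with one-dimensional kernel spanned by $t_0$, that field generates the whole isotropy. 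Finally, the bracket of two infinitesimal symmetries is again one, and the space of infinitesimal symmetries is exactly this four-dimensional span, so $t_0,\dots,t_3$ span a Lie algebra, as claimed.

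The only genuine work is the middle step: carrying out the chain of integrations carefully and — the real point — the bookkeeping showing that the system closes after finitely many steps with precisely four free constants and no more. The two places where one matches coefficients of powers of $\theta$ are what simultaneously eliminate the spurious parameters and deliver the clean normal forms in \eqref{subalg}; the reduction to PDEs and the final identification with the $R_i$ are routine. This is precisely the sort of computation handled by the DifferentialGeometry package used throughout the paper.
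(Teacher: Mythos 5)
Your proposal is correct and follows essentially the same route as the paper: write $v$ in the frame $n_1,n_2,n_3$, turn $\mathcal L_v\N\subset\N$ and $\mathcal L_v r=0$ into a PDE system, integrate to a four-parameter family, and identify $t_1,t_2,t_3$ with the right-invariant fields of Proposition \ref{pravo-inv} and $t_0$ with the isotropy at the origin. The only cosmetic difference is that you impose the metric condition restricted to $\N$ (via $(\mathcal L_v r)(n_i,n_j)=0$, in nonholonomic derivatives), whereas the paper kills the full coordinate tensor $\mathcal L_v r$, which shortcuts the integration; your weaker-looking system closes to the same four-dimensional algebra, as your $\theta$-power bookkeeping shows.
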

\begin{proof}
We write the corresponding pde's using the contact form $\phi=\text{d}y-\theta \text{d}x$ of $\N$ as follows. For arbitrary vector field $v=f_1(x,y,\theta)n_1+f_2(x,y,\theta)n_2+f_3(x,y,\theta)n_3$ we compute the Lie derivative of $n_1,n_2$ with respect to $v$ and evaluate it on $\phi$. This gives the system 
\begin{align} \label{sys_dist}
\begin{split}
f_2+{\del f_3 \over \del \theta}=0, \ \ \ \ \ \
f_1-{ \theta{\del f_3 \over \del y}} - {\del f_3 \over \del x}  =0
\end{split}
\end{align}
on infinitesimal symmetries preserving the distribution $\N$. Moreover, Lie derivative of the metric $r$ has the form
\begin{align} \label{L_met}
2{\del f_2 \over \del x} \text{d}x^2 +2{\del f_2 \over \del y} \text{d}x \text{d}y+ 2\left( {\del f_1 \over \del x}+{\del f_2 \over \del \theta} \right)\text{d}x \text{d}\theta + 2{\del f_1 \over \del y} \text{d}y \text{d}\theta +2{\del f_1 \over \del \theta} \text{d}\theta^2
\end{align}
and this tensor should vanish. The vanishing of coefficients of \eqref{L_met} implies that $f_1$ is a function of $x$ and $f_2$ is a function of $\theta$. Moreover, since the derivative of $f_1$ with respect to $x$ (which is a function of $x$) equals to minus the derivative of $f_2$ with respect to $\theta$ (which is a function of $\theta$) both functions $f_1,f_2$ must be linear with the same leading coefficient up to sign. Thus $f_1=-C_1x+C_3$ and $f_2=C_1\theta+C_2$. Then first equation of \eqref{sys_dist} gives by direct integration with respect to $\theta$ that $f_3=-{C_1 \over 2}\theta^2-C_2\theta + C +f(x,y)$ for some constant $C$ and function $f$. Then the second equation of \eqref{sys_dist} gives by direct integration with respect to $x$ that $f_3=-{C_1 \over 2}(a^2+x^2)-C_2\theta+C_3x+C_4$ and the derivation with respect to $y$ does not appear, because $f_1$ does not depend on $\theta$. Then substituting $f_i$ into $v$ and independent choice of constants gives infinitesimal automorphisms $t_i$.

Then \cite[Lemma 7.23.]{ABB} states that a diffeomorphism on a Lie group is a right translation if and only if it commutes with all left translations and one can check directly that this is the case of $t_i$, $i=1,2,3$.  The infinitesimal symmetry $t_0$ clearly preserves the origin.
\end{proof}

In the Listing \ref{cod_inf} we find infinitesimal symmetries of the control system and the isotropy subalgebra of the origin.   

\tiny 
\begin{lstlisting}[language=Mathematica ,caption={Infinitesimal symetries of the metric}, label={cod_inf}]
ChangeFrame(N);
inf_met := InfinitesimalSymmetriesOfGeometricObjectFields([[n1, n2], r], 
output = "list");
inf_met_stab := IsotropySubalgebra(inf_met, [x = 0, y = 0, theta = 0]);
\end{lstlisting}
\normalsize

In the Listing  \ref{cod_pde} we  find the system of pde's and its solutions directly.
\tiny 
\begin{lstlisting}[language=Mathematica ,caption={Corresponding pde}, 
label={cod_pde}]
v := evalDG(f1(x, y, theta)*n1 + f2(x, y, theta)*n2 + f3(x, y, theta)*n3);
pfh := op(Annihilator([n1, n2]));
r1 := ContractIndices(LieDerivative(v, n1), pfh, [[1, 1]]);
r2 := ContractIndices(LieDerivative(v, n2), pfh, [[1, 1]]);
r3 := DGinfo(LieDerivative(v, r), "CoefficientSet");
r4 := {r1, r2, op(r3)};
pdsolve(r4);
\end{lstlisting}
\normalsize
Let us note that the contact manifold $(N,\N)$ has infinitely  dimensional algebra of infinitesimal symmetries (over $\R$). We can parametrize them by solving the system \eqref{sys_dist}. Clearly, the function $f_3$ can be arbitrary and then $f_2=-{\del f_3 \over \del \theta}$ and $f_1=\theta {\del f_3 \over \del y}+{\del f_3 \over \del x}.$ In particular, fields $t_i$ correspond to choices of $f_3$ of the form $f_3=-\frac{1}{2}(x^2+\theta^2)$ for $t_0$, $f_3=- \theta$ for $t_1$,
$f_3=x$ for $t_2$ and $f_3=1$ for $t_3$.

\begin{rem} Using the same methods we can also find the symmetries of the  system $(K,\mathcal D, k )$. It turns out that  there are only translations corresponding to the right--invariant fields on the  Lie algebra $\mathfrak k$, see Section \ref{pmp-orig}. The natural generators of the symmetry algebra have the expected form
$-y \partial_x + x \partial_y + \partial_{\theta}, \partial_y$ and $\partial_x$.   
\end{rem}

\subsection{Action of translations and changes of initial conditions}
Since translations $t_i$, $i=1,2,3$ correspond to right-invariant fields, their action preserves left-invariant objects on $N$ with respect to the group multiplication. Apart from the fact that they preserve $\N$ and $r$, they also preserve vertical coordinates $h_i$ and normal Hamiltonian $H$ and corresponding Hamiltonian field. Since local extremals are flows of this field, translations $t_i$ preserve vertical part of the solution. On the other hand, the action of $t_i$ corresponds to left multiplication in $N$, so it maps horizontal solutions to horizontal solutions.

\begin{prop*} 
The action of the flow of infinitesimal transformation $t_b=b_1t_1+b_2t_2+b_3t_3$, $b_1,b_2,b_3 \in \R$, at time $s$ maps a local extremal corresponding to a local minimizer  with initial condition $x(0)=y(0)=\theta(0)=0$ to another local extremal with the same  vertical part,  and the horizontal part corresponds to local minimizer with initial condition $x(0)=b_1s$, $y(0)={1 \over 2} b_2b_1 s^2+b_3s$, $\theta(0)=b_2s$.
\end{prop*}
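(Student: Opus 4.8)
The plan is to compute explicitly the flow of the right-invariant field $t_b = b_1t_1 + b_2t_2 + b_3t_3$ and then apply it to the known solution curve. First I would observe that, by Proposition \ref{sym_met}, the fields $t_i$ are the right-invariant vector fields $R_i$ of Proposition \ref{pravo-inv}, so their flows are \emph{left} translations on $N$: the flow of $t_b$ at time $s$ is the map $g \mapsto \ell_{\gamma(s)}(g) = \gamma(s)\circ g$, where $\gamma(s) = \exp(s\,t_b)$ is the one-parameter subgroup through the origin generated by $t_b$. Using the group law \eqref{group} one checks directly that this subgroup is $\gamma(s) = (b_2 s,\ b_1 s,\ b_3 s + \tfrac12 b_2 b_1 s^2)$ in coordinates $(\theta, x, y)$ — or, reverting to the order $(x,y,\theta)$ used for the nilpotent metric, the point with $x$-coordinate $b_1 s$, $y$-coordinate $\tfrac12 b_1 b_2 s^2 + b_3 s$, and $\theta$-coordinate $b_2 s$. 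This is exactly the claimed initial condition, so it remains to argue that left translation by $\gamma(s)$ sends the solution with initial value at the origin to a solution with initial value $\gamma(s)$ and the same vertical part.

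For the vertical part: the discussion preceding the statement already records that $t_i$, being right-invariant, preserve the left-invariant objects $\N$, $r$, the vertical coordinates $h_i$, the normal Hamiltonian $H$, and hence $\vec H$; since a normal extremal is an integral curve of $\vec H$, its image under the flow of $t_b$ is again an integral curve of $\vec H$, and because $t_b$ preserves each $h_i$ the vertical component $(h_1(t), h_2(t), h_3(t))$ is literally unchanged. For the horizontal part: left translation by a fixed group element carries admissible (horizontal) curves to admissible curves of the same length, because $\N$ and $r$ are left-invariant; hence the image of a local minimizer is a local minimizer. The image of the curve $c(t)$ with $c(0) = o$ is $\ell_{\gamma(s)}\circ c$, whose value at $t=0$ is $\gamma(s)\circ o = \gamma(s)$, giving precisely the stated initial condition. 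Putting these together yields the proposition.

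The only genuinely computational step is the identification of the one-parameter subgroup $\exp(s\,t_b)$; I expect this to be the main obstacle only in the bookkeeping sense, since one must be careful about the coordinate ordering (the symmetries in Proposition \ref{sym_met} are written in the order $(x,y,\theta)$ while the group law \eqref{group} is written in the order $(\theta,x,y)$) and about the quadratic cross-term $\theta\tilde x$ in \eqref{group} that produces the $\tfrac12 b_1 b_2 s^2$ contribution. One can obtain $\exp(s\,t_b)$ either by solving the ODE $\dot\gamma = t_b(\gamma)$, $\gamma(0)=o$, directly from the formulas \eqref{subalg}, or, more slickly, by noting that on a $2$-step nilpotent group $\exp(s\,t_b)$ equals the image under the group exponential of $s$ times the Lie-algebra element $b_1 R_2 + b_2 R_1 + b_3 R_3$ and reading off coordinates of the second kind; either way the answer matches \eqref{sol_nil} in the degenerate case $h_3=0$, which is a useful consistency check since that degenerate solution is itself the orbit of the origin under such a subgroup.

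Finally, I would remark that this proposition makes precise the claim used in the proof of Proposition \ref{prop_hor} that it suffices to integrate the horizontal system from the origin: every other initial condition of the indicated form is reached by a symmetry, and the set of such initial conditions is all of $N$ since $(b_1 s, \tfrac12 b_1 b_2 s^2 + b_3 s, b_2 s)$ ranges over $\R^3$ as $(b_1, b_2, b_3, s)$ vary. No new machinery beyond Proposition \ref{sym_met}, Proposition \ref{pravo-inv}, the group law \eqref{group}, and the left-invariance already established is needed.
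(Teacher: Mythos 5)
Your proposal is correct, and it is essentially the ``general theory'' route that the paper compresses into the first sentence of its proof (and into the paragraph preceding the proposition), so the two arguments overlap substantially; the difference lies in how the verification is carried out. The paper, after invoking that general theory, writes the time-$s$ flow of $t_b$ explicitly in coordinates ($x\mapsto x+b_1s$, $y\mapsto y+b_2xs+b_3s+\tfrac12 b_1b_2s^2$, $\theta\mapsto\theta+b_2s$; the printed ``$x\mapsto b_1+x$'' is a typo), applies it to the explicit solution \eqref{sol_nil}, and checks by direct substitution that the transformed curve \eqref{sol_nil-posunuta} together with the unchanged vertical part \eqref{vert_sol} solves the system (\ref{ver_nil},\ref{hor_nil}) with the stated initial condition. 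You instead replace that substitution check by the structural identification: since $t_b$ is right-invariant (Propositions \ref{sym_met} and \ref{pravo-inv}), its flow at time $s$ is left translation by the one-parameter subgroup $\gamma(s)$, which you correctly compute from \eqref{group} (or from the ODE $\dot\gamma=t_b(\gamma)$) to be the point with $x=b_1s$, $y=\tfrac12 b_1b_2s^2+b_3s$, $\theta=b_2s$; left-invariance of $\N$, $r$, the $h_i$ and $H$ under the lifted flow then yields preservation of extremals, of the vertical part, and the new initial condition $\gamma(s)\circ o=\gamma(s)$ all at once. What your route buys is independence from the explicit formula \eqref{sol_nil} — it applies verbatim to the degenerate case $h_3=0$ and to any extremal, and it makes transparent why the initial condition is exactly $\exp(st_b)$; what the paper's computation buys is the explicit closed form \eqref{sol_nil-posunuta} for the translated minimizers (used later) and a verification that does not lean on the cotangent-lift formalism. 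Your coordinate bookkeeping between the orders $(x,y,\theta)$ and $(\theta,x,y)$ is handled correctly, and your consistency check against the degenerate solution is sound.
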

\begin{proof}
The statement follows from the general theory. We can also compute explicitly the action. The transformation takes the form $x \mapsto b_1+x$, $y \mapsto {1 \over 2} b_2 b_1 s^2+b_2xs+b_3s+y$, $\theta \mapsto b_2s+\theta$. Its action maps a local minimizer \eqref{sol_nil} with initial condition $x(0)=y(0)=\theta(0)=0$ to a local minimizer
\begin{align} \label{sol_nil-posunuta}
\begin{split}
x &= b_1s+{1 \over C_1} (C_3-C_2\sin(C_1t)-C_3\cos(C_1t)),\\
y &= 
{1 \over 2} b_2 b_1 s^2+b_2s{1 \over C_1} (C_3-C_2\sin(C_1t)-C_3\cos(C_1t))+b_3\\
&+
{1 \over 4C_1}(
2C_1(C_2^2+C_3^2)t-4C_2C_3\cos(C_1t)+2C_2C_3\cos(2C_1t)\\
&-4C_2^2\sin(C_1t)+(C_2^2-C_3^2)\sin(2C_1t)+2C_2C_3 ),\\
\theta &= b_2s+{1 \over C_1}(C_2 -C_2\cos(C_1t)+C_3\sin(C_1t)).
\end{split}
\end{align}
One can check by direct computation that \eqref{sol_nil-posunuta} together with 
\eqref{vert_sol} solve the system (\ref{ver_nil},\ref{hor_nil}) 
with the initial condition above. Analogous statement holds for the degenerate solutions.
\end{proof}
One can easily see that every point of a suitable neighbourhood of $(0,0,0)$ can be expressed by suitable choice of $b_i$. 

In the Listing \ref{cod_flow} we realize the action of general translation as a transformation.
\tiny 
\begin{lstlisting}[language=Mathematica ,caption={Flows}, label={cod_flow}]
t1 := D_x;
t2 := evalDG(D_y*x + D_theta);
t3 := D_y;
fl := Flow(evalDG(b1*t1 + b2*t2 + b3*t3), s);
tb := Transformation(N, N, ApplyTransformation(fl, [x = x, y = y, theta = theta]));
#The last row changes the data structure to the transformation
\end{lstlisting}
\normalsize
One can apply the transformation $t_b$ on the horizontal solution of the system at the origin. 

\subsection{Action of isotropy symmetries} \label{cut-poin}
The symmetry $t_0=\theta\del_x+{\theta^2-x^2 \over 2}\del_y-x\del_\theta$ generates one-dimensional isotropy subalgebra of all infinitesimal symmetries at $(0,0,0) \in N$.  Its flow $\Fl_{t_0}$ gives a one--parametric family of transformations parameterized by $s$ of the form $x \mapsto \theta \sin(s)+x\cos(s)$, $y \mapsto {\theta^2 - x^2 \over 2}\sin(s)\cos(s)-x\theta\sin^2(s)+y$, $\theta \mapsto \theta \cos(s)-x\sin(s)$.

The general principle states that local minimizers are locally optimal and on each local minimizers, there is a point after which it is not optimal. The first point with this property is called cut-point  \cite[Definition 8.71.]{ ABB}. In particular, if there are two local minimizers starting at the origin which intersect at some point $n \in N$ at the same time, the local minimizer cannot be optimal after this point. If there is an isotropic symmetry with a fixed point $n \neq o$ and a local minimizer going from $o=(0,0,0)$ to $n$, then either the local minimizer is contained in the fixed point set of the symmetry or the symmetry maps the local minimizer to another local minimizer from $o$ to $n$ of the same length, so the above principles apply. 

We use here the symmetry $t_0$ to recover the results for 
non--degenerate normal extremals. The fixed-point set of $t_0$ equals to $S=\{(0,y,0): y \in \R\}$.
\begin{prop*}
Each local minimizer $c(t)$ of the form \eqref{sol_nil} such that $C_1 \neq 0$ and $C_2C_3 \neq 0$ intersects with fixed point set $S$ at the point $(0,{\pi(C_2^2 + C_3^2) \over C_1^2},0)$ at the time $t={2\pi \over C_1}$ and it is the first point with this property. For each $c(t)$, there is a one--parametric family of local minimizers parametrized by $s$  which is given as orbit  of $c(t)$ with respect to the action of the flow of $t_0$.
\end{prop*}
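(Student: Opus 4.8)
The plan is to establish the two assertions separately: the first is a direct computation with the explicit formulas \eqref{sol_nil}, and the second follows from the fact, recorded in Proposition \ref{sym_met}, that $t_0$ is an infinitesimal symmetry of $(N,\N,r)$ fixing the origin.

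First I would locate the intersection of $c(t)$ with $S=\{(0,y,0):y\in\R\}$ by imposing $x(t)=0$ and $\theta(t)=0$. From \eqref{sol_nil} these two conditions read, after writing $u=C_1t$, as $C_3(1-\cos u)=C_2\sin u$ and $C_2(1-\cos u)=-C_3\sin u$. Multiplying the first equation by $C_2$ and the second by $C_3$ makes the left-hand sides equal, so $C_2^2\sin u=-C_3^2\sin u$, i.e. $(C_2^2+C_3^2)\sin u=0$; since $C_2C_3\neq0$ gives $C_2^2+C_3^2\neq0$, this forces $\sin(C_1t)=0$, hence $C_1t\in\pi\mathbb{Z}$. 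The odd multiples of $\pi$ are excluded, because there $\cos(C_1t)=-1$ and the two conditions would then force $2C_3=0$ and $2C_2=0$, contradicting $C_2C_3\neq0$; thus the admissible times are exactly $C_1t\in2\pi\mathbb{Z}$. Since $c(0)=(0,0,0)$ already lies in $S$, the first time with this property among $t>0$ is $t=2\pi/C_1$ (taking, as we may, $C_1>0$), and the curve misses $S$ on the open interval $(0,2\pi/C_1)$ because $\sin(C_1t)$ has no zero there. Finally, substituting $C_1t=2\pi$ into the $y$-component of \eqref{sol_nil} — so that $\cos(C_1t)=\cos(2C_1t)=1$ and $\sin(C_1t)=\sin(2C_1t)=0$ — all trigonometric contributions collapse and one is left with $y={1\over 4C_1^2}\bigl(4\pi(C_2^2+C_3^2)-4C_2C_3+2C_2C_3+2C_2C_3\bigr)={\pi(C_2^2+C_3^2)\over C_1^2}$, which gives the claimed endpoint $n:=\bigl(0,{\pi(C_2^2+C_3^2)\over C_1^2},0\bigr)$.

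For the one-parametric family I would argue as follows. By Proposition \ref{sym_met}, $t_0$ is an infinitesimal symmetry of the sub--Riemannian structure, so its flow $\Fl_{t_0}^s$ is a one-parameter group of isometries of $(N,\N,r)$; as $t_0$ vanishes at $(0,0,0)$, each $\Fl_{t_0}^s$ fixes the origin. Hence for every $s$ the curve $\Fl_{t_0}^s\circ c$ is an admissible curve issuing from $o$ with the same length as $c$, and therefore is again a local minimizer. Moreover $n\in S$ lies in the fixed-point set of $t_0$, so every member of the family joins $o$ to the same point $n$. That the family is genuinely one-dimensional follows because $C_2C_3\neq0$ implies $c(t)$ is not contained in $S$ (its $x$- and $\theta$-components are not identically zero), so the $t_0$-orbit of $c$ is not reduced to $c$ itself.

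The computations above are routine; the only places needing a little care are the bookkeeping of which intersection counts as the ``first'' one — remembering that $t=0$ trivially lands in $S$ — and checking that $n$ is fixed by $t_0$, which is immediate from the explicit form of $\Fl_{t_0}$ recalled just before the statement.
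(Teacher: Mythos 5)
Your proof is correct, and for the first assertion it is essentially the paper's argument: impose $x(t)=\theta(t)=0$ on \eqref{sol_nil} (the paper packages the two conditions as a matrix identity for $(C_2,C_3)$, you eliminate by cross-multiplying), conclude $\sin(C_1t)=0$ with the odd multiples of $\pi$ ruled out by $C_2C_3\neq0$, hence $C_1t\in2\pi\mathbb{Z}$, and evaluate $y$ at $t=2\pi/C_1$. One small blemish: the closing clause that the curve misses $S$ on $(0,2\pi/C_1)$ ``because $\sin(C_1t)$ has no zero there'' is literally false ($\sin(C_1t)$ vanishes at $t=\pi/C_1$); this is harmless, since your preceding step already shows the admissible times are exactly $C_1t\in2\pi\mathbb{Z}$, which by itself gives the ``first point'' claim, but the clause should be deleted or rephrased.

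For the second assertion your route differs from the paper's. The paper verifies by direct computation that $\Fl_{t_0}^s$ maps the explicit solution \eqref{sol_nil} to another curve solving the extremal system, exhibits the transformed $h_1,h_2$ and checks $h_1^2+h_2^2=C_2^2+C_3^2$ (same parametrization) together with the unchanged endpoints $\hat c(0)=o$ and $\hat c(2\pi/C_1)=(0,\pi(C_2^2+C_3^2)/C_1^2,0)$. You instead argue conceptually from Proposition \ref{sym_met}: the flow of $t_0$ is a one-parameter group of isometries of $(N,\N,r)$ fixing the origin, hence carries admissible curves from $o$ to admissible curves from $o$ of the same length, so local minimizers go to local minimizers; since the endpoint lies in the fixed-point set $S$, all members of the family join $o$ to the same point, and the orbit is nontrivial because $c$ is not contained in $S$. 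Both arguments are valid; the paper's computation has the side benefit of displaying the transformed controls (and the preserved arc-length normalization) explicitly, while your symmetry argument is shorter and explains structurally why that computation must succeed.
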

\begin{proof}
A local minimizer  \eqref{sol_nil} intersect with $S$ if and only if $x=\theta=0$, i.e.
$$
\begin{pmatrix} C_3 \\ C_2 \end{pmatrix}
= 
\begin{pmatrix} \sin(C_1 t)  & \cos(C_1 t) \\ 
\cos(C_1 t) & -\sin(C_1 t) \end{pmatrix}
\begin{pmatrix} C_2 \\ C_3 \end{pmatrix},
$$
and thus $C_1 t=2 k \pi$, $k \in \mathbb Z$.  Starting at $t=0$ the first such non trivial point corresponds to $k=1$ in positive direction. The evaluation of $y$ at the time $t={2\pi \over C_1}$ gives mentioned value.

A direct computation shows that $\Fl_{t_0}$ maps a local minimizer \eqref{sol_nil} to a local minimizer $\hat c(t)=( x, y, \theta) $ such that 
$\hat c(0) = (0,0,0)$, $\hat c({2 \pi \over C_1}) = (0,{\pi(C_2^2 + C_3^2) \over C_1^2},0)$ and 
\begin{align}
h_1 &= \dot x =\frac{-C_2 C_1 \cos(C_1 t + s) + C_3 C_1 \sin(C_1 t + s)}{C_1},\\
h_2 &= \dot \theta =\frac{ C_3 C_1 \cos(C_1 t + s) + C_2 C_1 \sin(C_1 t + s)}{C_1}
\end{align}
which satisfies $h_1^2+h_2^2=C_2^2 + C_3^2$, i.e. carries the same parametrisation. 
\end{proof}
Let us note that for the case $C_2=C_3=0$, the solution is degenerate 
and it is contained in $S$ at all times.
\begin{cor*} \label{cut} 
Each local minimizer \eqref{sol_nil} parametrized by arc-length intersects with $S$ at the point $(0,{\pi \over C_1^2},0)$ at the time $t={2\pi \over C_1}$, and it is a first point with this property.
\end{cor*}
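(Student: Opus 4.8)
The plan is to obtain the Corollary directly from the preceding Proposition by imposing the arc-length normalisation on the constants $C_1,C_2,C_3$ of Proposition \ref{solution}. So this is essentially a specialisation argument, with a short side computation to cover the boundary cases.

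First I would recall that a local minimizer \eqref{sol_nil} is parametrized by arc-length exactly when $h_1^2+h_2^2=1$ holds along the extremal, and from the vertical solution \eqref{vert_sol} one computes
\[
h_1^2+h_2^2=\big(C_2\sin(C_1t)+C_3\cos(C_1t)\big)^2+\big(C_3\sin(C_1t)-C_2\cos(C_1t)\big)^2=C_2^2+C_3^2 ,
\]
so that in the generic case $h_3\neq0$ the arc-length condition is precisely $C_2^2+C_3^2=1$, as already noted above in the discussion of the normal Hamiltonian.

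Next I would invoke the previous Proposition: for $C_1\neq0$ and $C_2C_3\neq0$ the minimizer meets the fixed-point set $S$ for the first time at $t=\tfrac{2\pi}{C_1}$, at the point $\big(0,\tfrac{\pi(C_2^2+C_3^2)}{C_1^2},0\big)$. Substituting the arc-length relation $C_2^2+C_3^2=1$ turns this point into $\big(0,\tfrac{\pi}{C_1^2},0\big)$, while the intersection time $\tfrac{2\pi}{C_1}$ is unchanged; this already establishes the claim whenever $C_2C_3\neq0$.

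Finally I would dispose of the remaining arc-length cases in which exactly one of $C_2,C_3$ vanishes — they cannot both vanish since $C_2^2+C_3^2=1$, and that degenerate situation is excluded by parametrisation by arc-length. Here I would simply rerun the short computation from the proof of the previous Proposition: the requirement $x=\theta=0$ for \eqref{sol_nil}, after eliminating the sine terms, forces $(C_2^2+C_3^2)\cos(C_1t)=C_2^2+C_3^2$, hence $\cos(C_1t)=1$ and $C_1t=2k\pi$; the first nontrivial instant is $k=1$, i.e. $t=\tfrac{2\pi}{C_1}$, and evaluating $y$ there again gives $\tfrac{\pi}{C_1^2}$. I do not expect a genuine obstacle: the Corollary is a routine normalisation of the Proposition, and the only point needing a little care is verifying that these edge cases $C_2C_3=0$ still behave exactly as in the generic computation.
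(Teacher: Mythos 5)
Your proof is correct and is essentially the paper's (implicit) argument: the Corollary is obtained by specialising the preceding Proposition to the arc-length normalisation $C_2^2+C_3^2=1$, which the paper leaves without further comment. Your extra check of the cases $C_2C_3=0$ (excluded from the Proposition's hypotheses but not from the Corollary's) is a small but genuine point of care that the paper glosses over, and your computation there — eliminating the sine terms to get $\cos(C_1t)=1$, hence $C_1t=2k\pi$ — is exactly what is needed.
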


This observation reflects known results about the conjugate locus developed by different methods in \cite{ABB,mope}.

In the Listing \ref{cod_trans} we realize the action of the isotropy symmetry $t_0$ as a transformation.  
\tiny 
\begin{lstlisting}[language=Mathematica ,caption={Flows}, label={cod_trans}]
t0 := evalDG(theta*D_x + (theta^2/2 - x^2/2)*D_y - x*D_theta);
f2 := simplify(Flow(t0, s));
t2 := simplify(Transformation(N, N, ApplyTransformation(f2, 
[x = x, y = y, theta = theta])));
\end{lstlisting}
\normalsize

Let us display in  Figure \ref{bomba} corresponding family of local minimizers for the local minimizer from  Example \ref{ex1}, where the red line displays exactly the local minimizer from  Example \ref{ex1}.
\begin{figure}[h!] \label{bomba}
\includegraphics[width=0.3\textwidth]{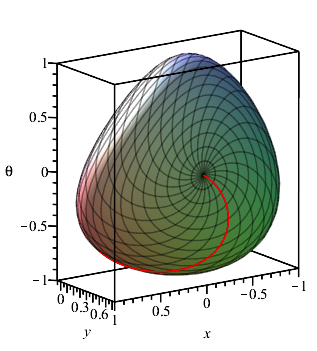} 
\caption{One parametric family of local minimizers}
\end{figure}

\section{Metrics and Lagrangean contact structures} \label{sec_metr}

In the previous section we always considered the fixed sub--Riemannian metric $r$ defined on the distribution $\N$ on $N$.
The choice of the metric $r$ corresponds to the choice of the basis of $\N$ where we consider generators $n_1 =\partial_{\theta} $ and $ n_2 =\partial_x + \theta \partial_y$. The filtered manifold $(N,\N)$ approximates the configuration space of the vertical rolling disc and the generators $n_1$, $n_2$ naturally correspond to the angular velocity and plane velocity. Thus the choice of $n_1$ and $n_2$ gives the choice of units of the two velocities which then impacts the metric and local optimal control. Then, one can choose arbitrary multiples of the fields to get different units of the two velocities. The problem of ratio of the two velocities appears e.g. in the inverse kinematic,  \cite{doty}. It is a known fact that all sub--Riemannian metrics are equivalent on the Heisenberg group,  \cite{ABB}. However, the fields $n_1$ and $n_2$ generate distinguished directions in $\N$, which then give preferred choices of metrics. Thus it is natural to focus on the situation where we still distinguish angular and plane velocity but we do not fix the units. This leads us to so called \emph{Lagrangean contact structures}.

\subsection{Lagrangian contact structure on $(N,\N)$}
A Lagrangian contact structure on a contact manifold is a decomposition of the contact distribution into two isotropic subbundles of the same dimension,  \cite[Section 4.2.3]{parabook}.
On the contact manifold $(N,\N)$ we consider the natural decomposition of $\N$ as $\N=E+F$ for $E=\langle n_1\rangle$ and $F=\langle n_2\rangle$. 
The Lagrangian contact structure $\N=E+F$ is left-invariant with respect to group structure on $N$ from Lemma \ref{grupa}.

 Since the sub--Riemannian metric $r$ is given such that $n_1,n_2$ are orthonormal, the Lagrangian contact structure generalizes $r$ in such a way that we forget the length of vectors and consider only subspaces generated by them. Different choice of generators of $E$ and $F$ then gives different metric which then gives the same orthogonality but differs in the choice of units. In this way we get a class of  sub--Riemannian metrics on $(N,\N)$.

The decomposition can be also viewed as the null space of the sub--Riemannian pseudo-metric on $\N$ of a split signature $(1,1)$ of the form $\text{d}x \text{d} \theta$, respectively the null space of the corresponding conformal class $[\text{d}x \text{d} \theta]$. However, this metric plays no role for the optimal control and can be used only as a different description of the geometric structure.

\subsection{Symmetries of Lagrangian contact structure}
The symmetry algebra of a three-dimensional Lagrangean contact structure is always finite-dimensional and it reaches the maximal possible dimension if the geometry is locally equivalent to the flag manifold corresponding to $\frak{sl}(3,\R)/\fp$, where $\fp$ is a Borel subalgebra. 
In such case, the symmetry dimension equals to $8$ and the geometry corresponds to a flat parabolic geometry, \cite[Section 4.2.3]{parabook}. In particular, the symmetry algebra coincides with $\frak{sl}(3,\R)$.
We will see that this is the case of the homogeneous nilpotent approximation $N$. In fact, the nilpotent approximation of a filtered manifold can be always viewed as a suitable representative of corresponding associated grading which is a flat  geometry.

Since the Lagrangian contact structure is left-invariant, the following fact holds trivially.

\begin{lem*} \label{gminus}
Symmetries $t_1 =\del_x$, $
t_2 =x\del_y+\del_\theta$, 
$t_3 = \del_y$ preserve the Lagrangian contact structure $(N,\N=E+F)$.
\end{lem*}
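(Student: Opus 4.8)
The plan is to verify directly that each of the vector fields $t_1,t_2,t_3$ is an infinitesimal automorphism of the Lagrangian contact structure $(N,\N=E+F)$, where $E=\langle n_1\rangle$ and $F=\langle n_2\rangle$. Since an infinitesimal symmetry of such a structure is precisely a vector field $v$ whose flow preserves the decomposition, it suffices to check that $\mathcal L_{t_i}E\subset E$ and $\mathcal L_{t_i}F\subset F$ for $i=1,2,3$; equivalently, that $[t_i,n_1]\in\langle n_1\rangle$ and $[t_i,n_2]\in\langle n_2\rangle$ at every point (the coefficients may be functions). Because $E$ and $F$ are each one-dimensional, this is the cleanest formulation, and it immediately implies $\mathcal L_{t_i}\N\subset\N$ as well, so the contact distribution is preserved too.

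First I would recall from Proposition \ref{sym_met} that $t_1,t_2,t_3$ coincide with the right-invariant vector fields $R_2,\,R_1+\text{(a translation adjustment)}$, ... — more precisely, from Proposition \ref{pravo-inv} the right-invariant fields are $R_1=\del_\theta+x\del_y$, $R_2=\del_x$, $R_3=\del_y$, and one checks $t_1=R_2$, $t_2=R_1$, $t_3=R_3$. The key structural fact is that left-invariant vector fields and right-invariant vector fields on a Lie group commute, so $[t_i,n_j]=0$ for all $i,j$ (here $n_1,n_2,n_3$ are the left-invariant fields of Lemma \ref{grupa}). Then trivially $[t_i,n_1]=0\in\langle n_1\rangle$ and $[t_i,n_2]=0\in\langle n_2\rangle$, so the decomposition $\N=E+F$ is preserved by the flow of each $t_i$. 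This is exactly the content of the sentence preceding the lemma ("Since the Lagrangian contact structure is left-invariant, the following fact holds trivially"), so I would present the proof in this spirit: invoke left-invariance of $E$ and $F$ together with the identification of $t_i$ as right-invariant, and conclude that their flows are built from left translations, which preserve every left-invariant object.

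Alternatively, and perhaps preferable for a self-contained short proof, I would simply compute the three brackets $[t_i,n_1]$ and $[t_i,n_2]$ by hand using $n_1=\del_\theta$, $n_2=\del_x+\theta\del_y$, $t_1=\del_x$, $t_2=x\del_y+\del_\theta$, $t_3=\del_y$, observe they all vanish, and note that the vanishing of $[t_i,n_1]$ shows $E$ is preserved while the vanishing of $[t_i,n_2]$ shows $F$ is preserved. There is no real obstacle here — the only thing to be careful about is making explicit why preserving $E$ and $F$ (rather than just $\N$) is what "preserving the Lagrangian contact structure" means; I would state that a symmetry of a Lagrangian contact structure is a diffeomorphism preserving the ordered decomposition of the contact distribution, so an infinitesimal one is a field whose flow does so, which is equivalent to $\mathcal L_v E\subset E$ and $\mathcal L_v F\subset F$. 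Given the brackets vanish identically, the proof is immediate. So the "hard part" is essentially nil; the statement is a direct corollary of left-invariance, and the write-up should just make the one-line bracket computation (or the commuting left/right fields argument) explicit.
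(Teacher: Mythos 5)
Your proof is correct and follows the paper's own reasoning: the paper dispenses with the lemma by noting that the Lagrangian contact structure is left-invariant and $t_1,t_2,t_3$ are the right-invariant (translation) fields, which is precisely your main argument. Your supplementary direct check that all brackets $[t_i,n_j]$ vanish is accurate and only makes the "trivial" step explicit.
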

Thus symmetries reflect the nilpotent symbol $[t_1,t_2]=t_3$ giving the contact grading $\fg^{-2}\oplus \fg^{-1}=\langle t_3 \rangle \oplus \langle t_1,t_2 \rangle$. Then we can find all symmetries by the methods of Tanaka prolongation,  \cite{tan,yam,zel}, see also Appendix \ref{tanaka}. 
\begin{prop*} \label{sym-lagr}
Symmetries of the Lagrangean contact structure $(N,\N=E+F)$ form the Lie algebra $\frak{sl}(3,\R)$ generated by $t_1, t_2, t_3$ and
\begin{align*}
&t_4=x\del_x+2y\del_y+\theta \del_\theta,\\
&t_5=\theta\del_\theta-x\del_x,\\
&t_6={y \over 2} \del_x-{\theta^2 \over 2} \del_\theta,\\
&t_7=x^2\del_x+xy\del_y-(\theta x-y)\del_\theta, \\
&t_8={xy \over 2} \del_x+{y^2 \over 2} \del_y+{\theta \over 2}(y-\theta x)\del_\theta.
\end{align*}
The Lie algebra $\frak{sl}(3,\R)$ carries a contact grading $\fg^{-2} \oplus \fg^{-1} \oplus \fg^{0} \oplus \fg^{1} \oplus \fg^{2}$ as follows: $\fg^{-2}=\langle t_3 \rangle$, $\fg^{-1}=\langle t_1,t_2 \rangle$, $\fg^{0}=\langle t_4,t_5 \rangle$, $\fg^{1}=\langle t_6,t_7 \rangle$ and $\fg^{2}=\langle t_8 \rangle$.
\end{prop*}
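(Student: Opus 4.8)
The plan is to verify that the eight listed vector fields span $\frak{sl}(3,\R)$ as a Lie algebra, check that each one is an infinitesimal symmetry of the Lagrangian contact structure $(N,\N=E+F)$, and then invoke the Tanaka prolongation theory to conclude that these exhaust the symmetry algebra. The main conceptual point is that the symmetry algebra of a filtered geometry modelled on a $|k|$-graded semisimple Lie algebra $\fg$ equals $\fg$ itself precisely when the geometry is flat; since $(N,\N)$ is the homogeneous nilpotent approximation of the rolling disc (Lemma \ref{NA}), it is by construction a flat model for the associated graded structure, so the symmetry dimension is the maximal value $8$ and the symmetry algebra is $\frak{sl}(3,\R)$.

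First I would set up the contact form $\phi=\text{d}y-\theta\,\text{d}x$ (as in the proof of Proposition \ref{sym_met}) together with the splitting $E=\langle n_1\rangle=\langle \del_\theta\rangle$ and $F=\langle n_2\rangle=\langle\del_x+\theta\del_y\rangle$. For a vector field $v$ to be a symmetry of the Lagrangian contact structure it must satisfy $\mathcal L_v(E)\subset E$ and $\mathcal L_v(F)\subset F$; equivalently, $[v,n_1]$ is a multiple of $n_1$ and $[v,n_2]$ is a multiple of $n_2$. For $t_1,t_2,t_3$ this is Lemma \ref{gminus}. For each of $t_4,\dots,t_8$ I would compute the two Lie brackets $[t_i,n_1]$ and $[t_i,n_2]$ directly and check that the first is proportional to $n_1=\del_\theta$ and the second to $n_2=\del_x+\theta\del_y$; these are short polynomial computations and can be cross-checked in the CAS listing. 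For instance one expects $[t_4,n_1]=-n_1$ and $[t_4,n_2]=-n_2$ (so $t_4$ scales the grading), $[t_5,n_1]=-n_1$, $[t_5,n_2]=n_2$, and for $t_6,t_7,t_8$ the brackets with $n_1,n_2$ land back in $\N$ with the required proportionality, exhibiting them as the positive-degree part of the prolongation.

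Next I would identify the grading. One checks that $t_4$ acts on $t_1,t_2$ with eigenvalue $-1$, on $t_3$ with eigenvalue $-2$, on $t_6,t_7$ with eigenvalue $+1$ and on $t_8$ with eigenvalue $+2$, while commuting appropriately with $t_5$; this exhibits the decomposition $\fg^{-2}\oplus\fg^{-1}\oplus\fg^0\oplus\fg^1\oplus\fg^2=\langle t_3\rangle\oplus\langle t_1,t_2\rangle\oplus\langle t_4,t_5\rangle\oplus\langle t_6,t_7\rangle\oplus\langle t_8\rangle$ claimed in the statement. Computing the full multiplication table of the $t_i$ then shows the bracket closes on their span, and that the resulting $8$-dimensional Lie algebra is simple with the root-space pattern of $\frak{sl}(3,\R)$ relative to the Borel subalgebra $\fp=\fg^0\oplus\fg^1\oplus\fg^2$; alternatively one exhibits an explicit linear isomorphism onto the trace-free $3\times3$ matrices carrying the standard contact grading of $\frak{sl}(3,\R)/\fp$. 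A clean way to organize this is to note that $\fg^0\cong\frak{gl}(2,\R)$ acting on $\fg^{-1}$ in the standard way, which forces the prolongation to be $\frak{sl}(3,\R)$ by the classification in \cite{yam,zel}.

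Finally, the upper bound: by Tanaka's theory the algebra of infinitesimal symmetries of a regular normal parabolic geometry of type $(\frak{sl}(3,\R),\fp)$ has dimension at most $\dim\frak{sl}(3,\R)=8$, with equality exactly on the flat model, and the maximally symmetric Lagrangian contact structure is the flag manifold $\frak{sl}(3,\R)/\fp$. Since $N$ with its left-invariant structure is the nilpotent approximation, hence flat, equality holds and the symmetry algebra is exactly $\frak{sl}(3,\R)$, so the $t_i$ found above are all of them. I expect the genuine obstacle to be not any single calculation but the bookkeeping of matching the abstract prolongation $\frak{sl}(3,\R)$ with the concrete vector fields $t_4,\dots,t_8$ — i.e. writing down explicitly the degree $0$, $1$ and $2$ pieces of the prolongation and verifying they are realized by these particular polynomial vector fields rather than merely counting dimensions; this is carried out in detail via the Maple computation in Appendix \ref{tanaka}.
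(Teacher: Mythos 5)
Your proposal follows essentially the same route as the paper: direct verification that each $t_i$ preserves $E$ and $F$ and that the eight fields satisfy the commutation relations of $\frak{sl}(3,\R)$ with the stated contact grading (the paper identifies $t_4$ as the grading element and $t_5$ as the splitting element via the structure theory, you via eigenvalue computations), while completeness of the list is supplied by Tanaka prolongation theory together with flatness of the left-invariant structure, which the paper realizes concretely through the algebraic and geometric prolongation in Appendix \ref{tanaka}. One caveat: your aside that ``$\fg^0\cong\mathfrak{gl}(2,\R)$ acting in the standard way forces the prolongation to be $\frak{sl}(3,\R)$'' is incorrect --- for the Lagrangian contact structure $\fg^0$ is only the two-dimensional subalgebra of $\mathfrak{gl}(\fg^{-1})$ preserving the two lines $E$ and $F$ (taking all of $\mathfrak{gl}(2,\R)$ is the plain contact case, whose prolongation is infinite-dimensional, as the paper notes) --- but your main argument via the multiplication table does not rely on this remark.
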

\begin{proof}
One can check by direct computation that the fields $t_i,i=1,\dots,8$ satisfy  $\mathcal{L}_{t_i}E \subset E$ and $\mathcal{L}_{t_i}F \subset F$ for the Lie derivative $\mathcal{L}$, i.e. preserve the Lagrangian contact structure, and simultaneously 
satisfy the commutation relations of $\frak{sl}(3,\R)$. 

The fields $t_i$ determine the following gradation: Translations 
$t_1,t_2,t_3$ preserve $\N$, $E=\langle n_1 \rangle$ and $F=\langle n_2 
\rangle$ due to Lemma \ref{gminus} and reflect the grading $\fg^{-2} \oplus 
\fg^{-1} $. The subalgebra $\fg^0$ always contains the grading element,  \cite[Section 3.1.2]{parabook}. In our case the grading element is the element $t_4$ which satisfies
$[t_4,X]=kX$ for $X \in \fg^k$. 
Moreover, $t_5 \in \fg^0$ is 
orthogonal  to $t_4$ with respect to Killing form
and satisfies
$[t_5,X]=X$ for $X \in E$,  
$[t_5,X]=-X$ for $X \in F$ and 
$[t_5,X]=0$ for $X \in \fg^{-2}$.
In particular, the element $t_4$ distinguishes $\fg^{-1}$ from $\fg^{-2}$, and $t_5$ distinguishes $E$ from $F$.
These two elements $t_4,t_5$ form the whole $\fg^0$ for dimensional 
reasons. Indeed, $\fg^0$ equals to its center which has dimension $2$ in the 
three-dimensional case. Finally, $\fg^1 \simeq (\fg^{-1})^*$, $\fg^2\simeq (\fg^{-2})^*$  with respect to Killing 
form, where $t_6$ corresponds to $t_2$, $t_7$ corresponds to $t_1$ and $t_8$ 
corresponds to $t_3$.
Details from the structure theory can be found in 
\cite{parabook}.
The explicit computation of the fields using prolongation procedure is given in Appendix \ref{tanaka}. 
\end{proof}

In Listing \ref{cod_lagr} we find infinitesimal symmetries of the Langrangean contact structure and discuss its properties.
\tiny 
\begin{lstlisting}[language=Mathematica ,caption={Infinitesimal symetries of Lagrangean contact structures}, label={cod_lagr}]
  inf_lagr := InfinitesimalSymmetriesOfGeometricObjectFields([[N1], [N2]], 
  output = "list");
  ChangeFrame(N);
  inf_lagr := InfinitesimalSymmetriesOfGeometricObjectFields([[n1], [n2]],
  output = "list");
  alg_lagr := LieAlgebraData(inf_lagr, al);
  DGsetup(alg_lagr);
  Query("Semisimple");
  Query([e1, e2, e3, e4, e5, e6, e7, e8], "SplitForm");
\end{lstlisting}
\normalsize
The next observation follows from Propositions \ref{sym_met} and \ref{sym-lagr}.
\begin{cor*}
The only transformations that preserve both the Lagranigan contact structure $\N=E+F$ and the metric $r$ are the translations $ \langle t_1, t_2, t_3 \rangle$. Thus there is no transformation preserving both the Lagranigan contact structure $\N=E+F$ and simultaneously the metric $r$ with a fixed point. 
\end{cor*}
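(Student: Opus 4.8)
The plan is to derive the corollary purely by comparing the two symmetry algebras already computed, with no new calculation beyond reading off a single coefficient. A transformation preserving both $\N=E+F$ and $r$ is, on the infinitesimal level (or as the flow of such a field), a vector field lying simultaneously in the symmetry algebra of $(N,\N,r)$ and in that of $(N,\N=E+F)$. By Proposition \ref{sym_met} the former is the four-dimensional algebra $\langle t_0,t_1,t_2,t_3\rangle$, and by Proposition \ref{sym-lagr} the latter is the eight-dimensional algebra $\langle t_1,\dots,t_8\rangle$. So the first step is to identify $\langle t_0,t_1,t_2,t_3\rangle\cap\langle t_1,\dots,t_8\rangle$ inside the space of vector fields on $N$.

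One inclusion is immediate, since $t_1,t_2,t_3$ appear in both lists. For the reverse inclusion I would write an arbitrary element of the first algebra as $v=a_0t_0+a_1t_1+a_2t_2+a_3t_3$ and ask when $v\in\langle t_1,\dots,t_8\rangle$, i.e. when $a_0t_0\in\langle t_1,\dots,t_8\rangle$. Here it suffices to compare the $\del_x$-components: that of $t_0$ is $\theta$, while the $\del_x$-components of $t_1,\dots,t_8$ are $1,\,0,\,0,\,x,\,-x,\,\tfrac{y}{2},\,x^2,\,\tfrac{xy}{2}$, whose $\R$-span consists of polynomials in $x,y$ alone and therefore contains no nonzero multiple of $\theta$. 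Hence $a_0=0$ and $v\in\langle t_1,t_2,t_3\rangle$. Thus the common infinitesimal symmetries are exactly $\langle t_1,t_2,t_3\rangle$, which by Proposition \ref{sym_met} coincide with the right-invariant fields of Proposition \ref{pravo-inv}, so the corresponding transformations are precisely the left translations of $N$.

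The second assertion then follows at once: left translations of a Lie group act freely, so no nontrivial common symmetry has a fixed point. Equivalently, the isotropy subalgebra of $(N,\N,r)$ at $(0,0,0)$ is $\langle t_0\rangle$ by Proposition \ref{sym_met}, and we have just shown $t_0\notin\langle t_1,\dots,t_8\rangle$, so nothing in the isotropy of the metric at the origin preserves the Lagrangian contact structure; left-invariance of both structures transports this conclusion to every point. I do not anticipate a genuine obstacle: the only point requiring a moment of care is to justify that comparing the single $\del_x$-coefficient already excludes $t_0$ — which works precisely because $t_0$ is the one generator whose coefficients involve $\theta$ in that slot — and to recall that $\langle t_1,t_2,t_3\rangle$ is the translation algebra, which makes the fixed-point statement automatic.
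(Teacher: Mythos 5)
Your proof is correct and follows essentially the same route as the paper, which simply deduces the corollary by intersecting the symmetry algebras of Propositions \ref{sym_met} and \ref{sym-lagr}; your coefficient comparison ruling out $t_0$ and the observation that $t_1,t_2,t_3$ generate fixed-point-free left translations just make explicit what the paper leaves to the reader.
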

 We focus on the action of $t_i$, $i=4,\dots, 8$ on metrics in the next sections.

\subsection{Sub--Riemannian metrics} 
The general principle states that for each transitive group $L$ acting on $N$,  all $L$-invariant objects on $N$ are in one-to-one correspondence with tensors on the tangent space $T_oN=\fm$, which are invariant 
under the isotropy representation, \cite{parabook}. 
Let us now focus on invariant sub--Riemannian metrics on $(N,\N)$.

On the distribution with symbol algebra $\mathfrak m$ one can compute the first step of the prolongation which is a Lie algebra $\mathfrak m \oplus \mathfrak g^{0}$, where the Lie algebra $\mathfrak g^{0}$ is the algebra of all derivations of $\mathfrak m$ which preserve the grading.
Since $\fg^{-1}$ generates $\mathfrak m$, the algebra $\mathfrak g^{0}$ in fact is the algebra of all derivations of $\fg^{-1}$ and  the subspace $\mathfrak m \oplus \mathfrak g^{0}$ is endowed with the natural structure of a graded Lie algebra. In particular, 
$[A,v] = A(v)$ for $A \in \mathfrak g^{0}$ and $v \in \mathfrak m $. 
This means that the action of automorphisms of $\mathfrak g^{0}$ on $\fg^{-1}$ is exactly the adjoint action. 
Each sub--Riemannian metric is given by the reduction corresponding to  
$\mathfrak{so}(2) \hookrightarrow  \text{ad} (\fg^0)  |_{\fg^{-1}} $
and $ \text{ad} (\fg^0)  |_{\fg^{-1}} =  \mathfrak{gl} (2,\mathbb R)$ 
in the Heisenberg case.
Then  every inclusion $\mathfrak{so}(2) \hookrightarrow  \mathfrak{gl} (2,\mathbb R) $ gives an invariant scalar product on $\fg^{-1}$ and 
thus an invariant sub--Riemannian metric on $\N$.  
Let us remark that for each choice of invariant sub--Riemannian metric corresponding to an inclusion of $\mathfrak{so}(2)$, the further prolongation of $\fm \oplus \mathfrak{so}(2) $ is trivial, \cite{ams}.
\begin{lem*}
On the Lagrangean contact structure $(N,\N=E+F)$ there is no 
symmetric, positive definite quadratic form $B$ on $\fg^{-1}$, i.e.
there is no sub--Riemanian metric on $\N$ invariant with respect to symmetries of Lagrangean contact structure.
\end{lem*}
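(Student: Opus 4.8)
The plan is to reduce the claim to a one–line computation with the isotropy representation on $\fg^{-1}$, using the structure theory already assembled in Proposition~\ref{sym-lagr}. By the general principle recalled just above, a sub--Riemannian metric on $\N$ invariant under the (locally transitive) action of the symmetry algebra $\mathfrak{sl}(3,\R)$ is determined by its value at $o=(0,0,0)$, which is a symmetric positive definite bilinear form $B$ on $\N_o\simeq\fg^{-1}=\langle t_1,t_2\rangle$ invariant under the isotropy subalgebra at $o$. That isotropy subalgebra is the parabolic $\fp=\fg^0\oplus\fg^1\oplus\fg^2$; its nilradical $\fg^1\oplus\fg^2$ acts trivially on $\N_o$, since $[\fg^1,\fg^{-1}]\subseteq\fg^0\subseteq\fp$ and $[\fg^2,\fg^{-1}]\subseteq\fg^1\subseteq\fp$, so the only part acting nontrivially on $\N_o$ is the reductive part $\fg^0=\langle t_4,t_5\rangle$, acting through $\ad(\fg^0)|_{\fg^{-1}}$. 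Hence it suffices to show that there is no nonzero symmetric bilinear form on $\fg^{-1}$ invariant under $\ad(\fg^0)|_{\fg^{-1}}$.

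Next I would record this action in the basis $(t_1,t_2)$. Since $t_4$ is the grading element of the contact grading (Proposition~\ref{sym-lagr}), it satisfies $[t_4,t_i]=-t_i$ for $i=1,2$; a direct bracket computation with $t_4=x\del_x+2y\del_y+\theta\del_\theta$, $t_1=\del_x$ and $t_2=x\del_y+\del_\theta$ confirms $\ad(t_4)|_{\fg^{-1}}=-\id$. One also gets $\ad(t_5)|_{\fg^{-1}}=\mathrm{diag}(1,-1)$, so $\ad(\fg^0)|_{\fg^{-1}}$ is the full algebra of diagonal endomorphisms of $\fg^{-1}$ in this basis — the reduction of the Heisenberg $\ad(\fg^0)|_{\fg^{-1}}=\mathfrak{gl}(2,\R)$ forced by remembering the splitting $\N=E\oplus F$ — but the single scalar $\ad(t_4)$ already suffices for the argument.

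The conclusion is then immediate: for any symmetric bilinear form $B$ on $\fg^{-1}$, infinitesimal invariance under a linear endomorphism $A$ reads $B(Av,w)+B(v,Aw)=0$ for all $v,w\in\fg^{-1}$, and taking $A=\ad(t_4)|_{\fg^{-1}}=-\id$ gives $-2B(v,w)=0$, so $B=0$; in particular no such $B$ is positive definite. The step one might anticipate as delicate — deciding whether some positive definite form survives the isotropy action — turns out to be forced to fail as soon as one observes that the grading element lies in the isotropy algebra, which is exactly the content of Proposition~\ref{sym-lagr}. More generally, the same argument shows that a graded geometry whose degree--zero symmetries contain the grading element of its negatively graded tangent algebra carries no invariant metric on that tangent algebra; the lemma is the instance of this for the Lagrangean contact structure on $(N,\N=E+F)$.
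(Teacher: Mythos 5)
Your argument is correct and follows essentially the same route as the paper: both reduce invariance to the condition $AB+BA^{t}=0$ for the matrices $\ad(t_4)|_{\fg^{-1}}=-\mathrm{id}$ and $\ad(t_5)|_{\fg^{-1}}=\mathrm{diag}(1,-1)$ of the $\fg^{0}$-action computed in Proposition \ref{sym-lagr}, and conclude that no positive definite $B$ survives. Your only deviations are cosmetic: you make explicit that the nilradical $\fg^{1}\oplus\fg^{2}$ of the isotropy acts trivially on $\N_o$ (which the paper subsumes under the general invariant-tensor principle stated before the lemma) and you observe that the grading element $t_4$ alone already forces $B=0$, whereas the paper tests the whole span $\langle a_1,a_2\rangle$.
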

\begin{proof} The Lie algebra $\mathfrak g^0$ generated by 
$t_4$ and $t_5$ from Proposition \ref{sym-lagr} acts on $\mathfrak g^{-1}$ as inner 
derivation by matrices:
\begin{align*} 
a_1:=\text{ad}({t_4}) |_{\fg^{-1}} =\begin{pmatrix} -1 & 0 \\ 0 & -1 \end{pmatrix} , \: \:
a_2:=\text{ad}({t_5}) |_{\fg^{-1}} =\begin{pmatrix} 1 & 0 \\ 0 & -1 \end{pmatrix}.
\end{align*} 
Each inclusion $\mathfrak{so}(2) \hookrightarrow  \mathfrak{gl} (2,\mathbb R) $ is given by a symmetric, positive definite matrix $B$ as 
$$\mathfrak{so}(2,B)  = \{A \in \text{Mat}( 2,\mathbb R) : AB+ BA^t =0 \}.$$ 
The element  $ \alpha_1 a_1 + \alpha_2 a_2$
belongs to $\mathfrak{so}(2,B)$ only if $B=0$. This is contradiction 
with the fact that $B$ is positive definite. In other words,   $\mathfrak{so}(2,B) \cap \langle a_1,a_2 \rangle = \emptyset$.
\end{proof}
In Listing \ref{cod_a} we compute the adjoint action of $\fg^0$ on $\fg^{-1}$. 

\tiny 
\begin{lstlisting}[language=Mathematica ,caption={Metrics}, label={cod_a}]
rep := LieAlgebras:-Adjoint(alg_lagr);
a1 := SubMatrix(rep[4], [2, 3], [2, 3]);
a2 := SubMatrix(rep[5], [2, 3], [2, 3]);
#The order of infinitesimal symetries can differ 
#in various versions of Maple and our choice reflects Maple 2019.
\end{lstlisting}
\normalsize

\subsection{Action of symmetries of Lagrangean contact structure on metrics} 
Although there is no sub--Riemannian metric which is invariant with respect to the action of symmetries of Lagrangian contact structure, we can still study action of these transformations on the metrics and thus on corresponding control systems. In fact, the action of such transformation maps distinguished generators $n_1, n_2$ of the distribution to another generators which are functional multiples of $n_1, n_2$. These news generators give different units and ratio of angular and plane velocity and thus a different metric. However, we can use the action of the transformations to compare the control system and corresponding local extremals for such different units. Let us demonstrate this principle on  symmetries from Proposition \ref{sym-lagr}.

Let us consider the transformation $t_6$ from Proposition \ref{sym-lagr}. Its flow gives a one-parameter family of transformations $f_s$ parametrized by $s$ of the form $x \mapsto \frac{sy}{2}+x$, $y \mapsto y$, $\theta \mapsto \frac{2\theta}{w}$, where $w=s\theta+2$. 
The action of $f_s$ on the vector fields $n_i$ is as follows:
\begin{align*}
f_s^*n_1 & = \frac{w^2}{4} \partial_{\theta} = \frac{w^2}{4} n_1,\\
f_s^*n_2 & = - \frac{2}{w} \partial_{x} 
- \frac{2 \theta}{w} \partial_{y} =  - \frac{2}{w}n_2, \\
f_s^*n_3 &= [f_s^*n_1,f_s^*n_2] = \frac{s}{2} \partial_{x} +  \partial_{y}.
\end{align*}
For each $s$, the fields $f_s^*n_1,f_s^*n_2$ generate $\N$. These fields together with $f_s^*n_3$ generate a Heisenberg Lie algebra and determine corresponding multiplication structure on $N$ for which are the fields left-invariant. 
The canonical metric $\tau_{s}=f_s^* r$ for this structure has the form
\begin{align*}
\tau_{s}=
(\text{d}x- \frac{s}{2}  \text{d}y )^2
+\left( \frac{4}{w^2} \text{d}\theta \right)^2. 
\end{align*}
Indeed, the expressions in brackets are the first two forms in the dual basis of $f^*_sn_i$.
Since we solve local problem in a neighbourhood of the origin, we simply exclude singularities.

Altogether, we get a one-parametric family of left-invariant control systems $(N,\N,\tau_s)$ (possibly for different multiplications) and the choice $s=0$ corresponds to identity and gives the original control system. Since  $[f^*_s n_1, f^*_s n_2]=f^*_s[n_1, n_2]=f^*_s n_3$, the vertical system has the shape  \eqref{ver_nil}(although the vertical coordinate functions differ according to the multiplication structure).
The horizontal system simply says that the curve is contact to the distribution but we shall use new vector fields, i.e. we have 
$\dot c(t)=\frac{u_1 w^2}{4}n_1-\frac{2u_2}{w}n_2$.
This can be viewed (locally) as a change of coordinates in $N$.
It follows from the functoriality that the action of $f_s$ maps local minimizers \eqref{sol_nil} onto minimizers of this new system. We present in Figure  \ref{f1a}
images of such minimizer for the  choice of constants $C_1=2,C_2=-{\sqrt{2} \over 2}, C_3={1 \over 2}, C_4=C_5=1$ for $s=0$ (red), $s=1$ (green), $s=3$ (blue). 
For better mechanical illustration we present the angular and plane movements in Figures  \ref{f1b} and \ref{f1c}.

\begin{figure}[h] 
	\subfigure[Local mimimizer]{\includegraphics[width=0.26\textwidth]{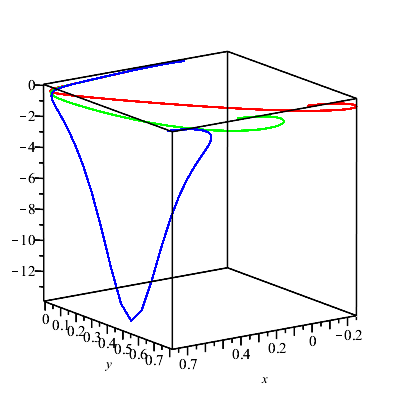}\label{f1a}}
	\subfigure[The parameter $\theta$]{\includegraphics[width=0.24\textwidth]{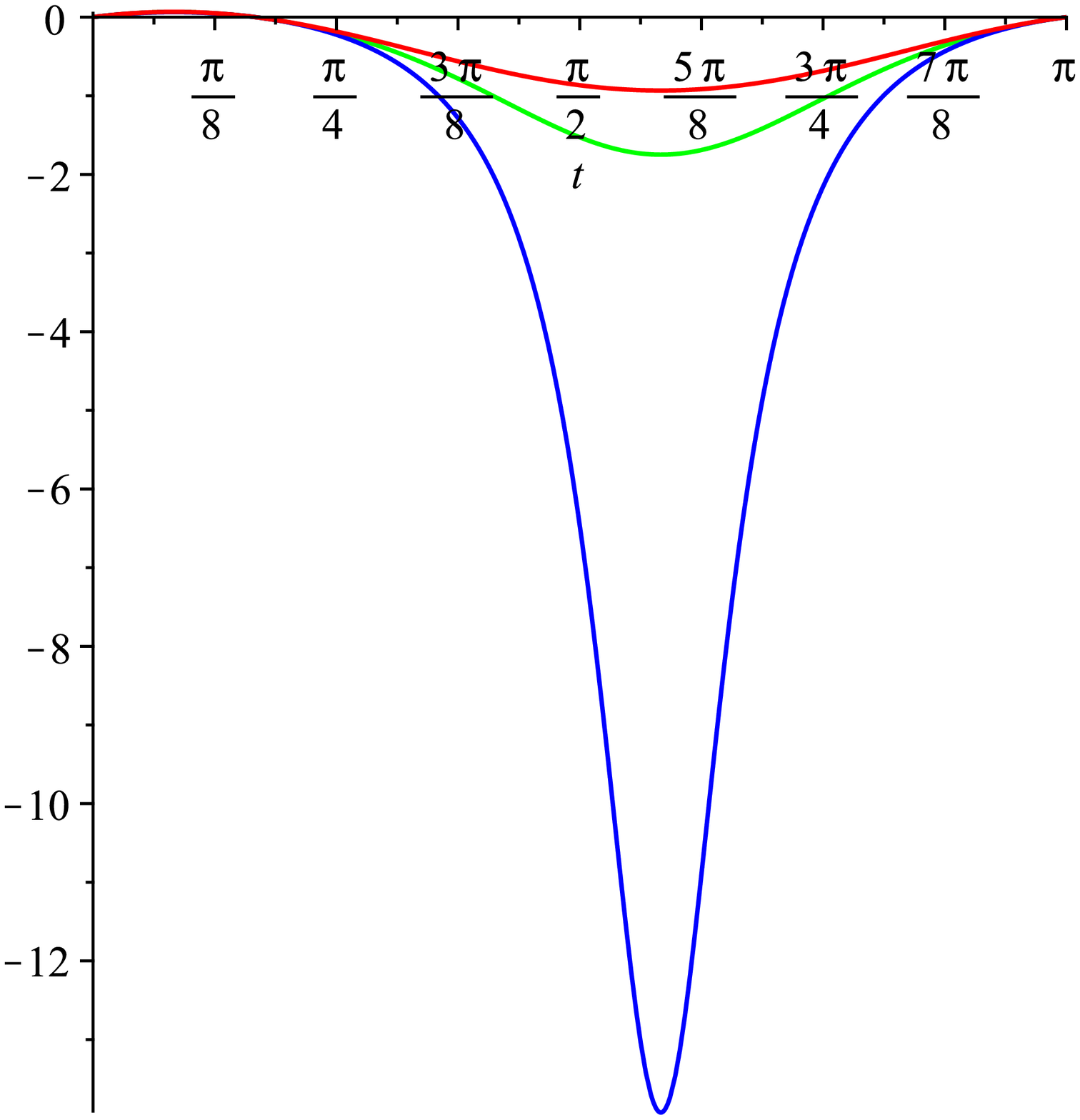}\label{f1b}}
	\subfigure[Trajectory of the contact pont in the plane $(x,y)$]{\includegraphics[width=0.24\textwidth]{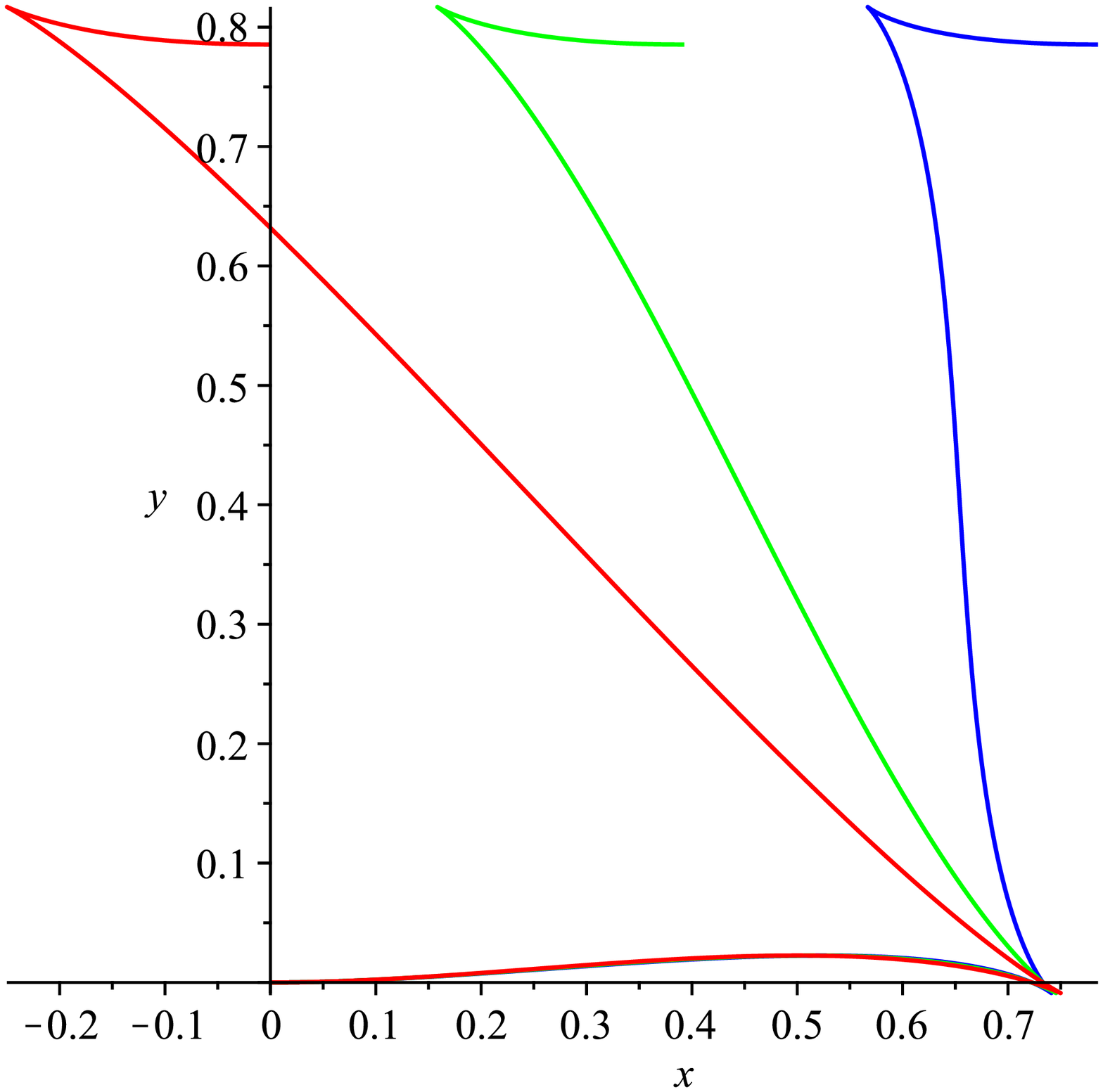} \label{f1c}}
	\caption{Action of one--parametric family for $t_6$}
	\label{geod_6} 
\end{figure}

In  Listing \ref{cod_gra}, we check by direct computation that transformed curves are solutions of the transformed system.
\tiny 
\begin{lstlisting}[language=Mathematica ,caption={Action on solutions}, label={cod_gra}]
psol2mod := solve({rhs(poc2[1]), rhs(poc2[2]), rhs(poc2[3])}, 
{_C1, _C4, _C5, _C6});
sol20mod := simplify(subs(psol2mod, sol2));
#  We describe the solution where in the form where the choice of constants
# is compatible 
t6 := evalDG(y/2*D_x - theta^2/2*D_theta);
f6 := simplify(Flow(t6, s));
p6 := simplify(PushPullTensor(f6, InverseTransformation(f6), [n1, n2]));
evalDG(h1(t)*p6[1] + h2(t)*p6[2]);
GC := subs(x = x(t), y = y(t), theta = theta(t), 
GetComponents(%, [D_x, D_y, D_theta]));
sys6 := {op(sys_vert), diff(theta(t), t) = GC[3], diff(x(t), t) = GC[1],
 diff(y(t), t) = GC[2]};
at6 := simplify(ApplyTransformation(f6, 
[rhs(sol20mod[2]), rhs(sol20mod[3]), rhs(sol20mod[1])]));
test := {op(sol_vert[2]), theta(t) = at6[3], x(t) = at6[1], y(t) = at6[2]};
pdetest(test, sys6);
\end{lstlisting}
\normalsize

One can check by direct computation that the symmetry algebra of the control system $(N,\N,\tau_s)$ is generated by four fields where three of them are the translations of the corresponding left-invariant structure the remaining one generates the one-parametric family of rotations preserving the origin. The corresponding fixed-point set has the form $(\frac{\ell s}{2},\ell,0)$
and this is exactly the image of the fixed-point set $S=\{(0,\ell,0)\}$ of $t_0$. We know from Section \ref{cut-poin} that the solutions \eqref{sol_nil} stop to be optimal at the points where they intersect with the fixed point $(0,\ell,0)$ and these points map exactly to the points where the transformed solutions intersect with the elements of the form $(\frac{\ell s}{2},\ell,0)$ and stop to be optimal.

Finally let us briefly show the behaviour of the systems for the elements $t_4$ and  $t_8$. The action of one parametric family of transformations corresponding to $t_4$ on the metric $r$ gives the family of metrics $\mu_s =e^{2s} \text{d}x^2 +e^{2s} \text{d} \theta^2$.
Let us note that the metrics belong to the conformal class of $r$. 
Applying the ideas for $t_4$, we get minimizers that we display 
in Figure \ref{f4} for the same choice of constants.

\begin{figure}[h] 
	\subfigure[Local mimimizer]{\includegraphics[width=0.24\textwidth]{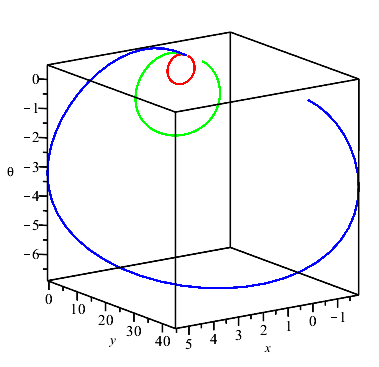}}
	\subfigure[The parameter $\theta$]{\includegraphics[width=0.24\textwidth]{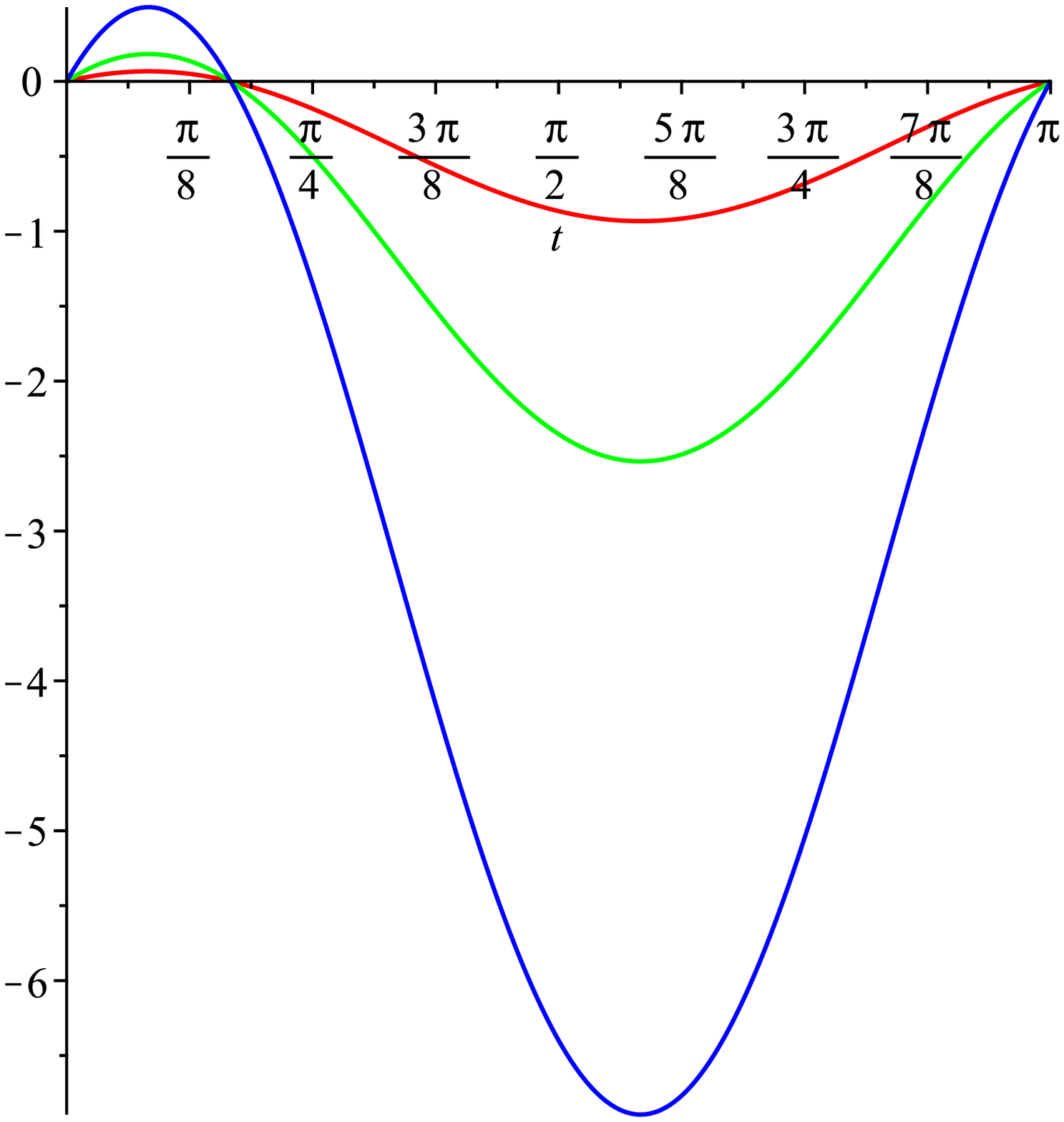}}
	\subfigure[Trajectory of the contact pont in the plane $(x,y)$]{\includegraphics[width=0.24\textwidth]{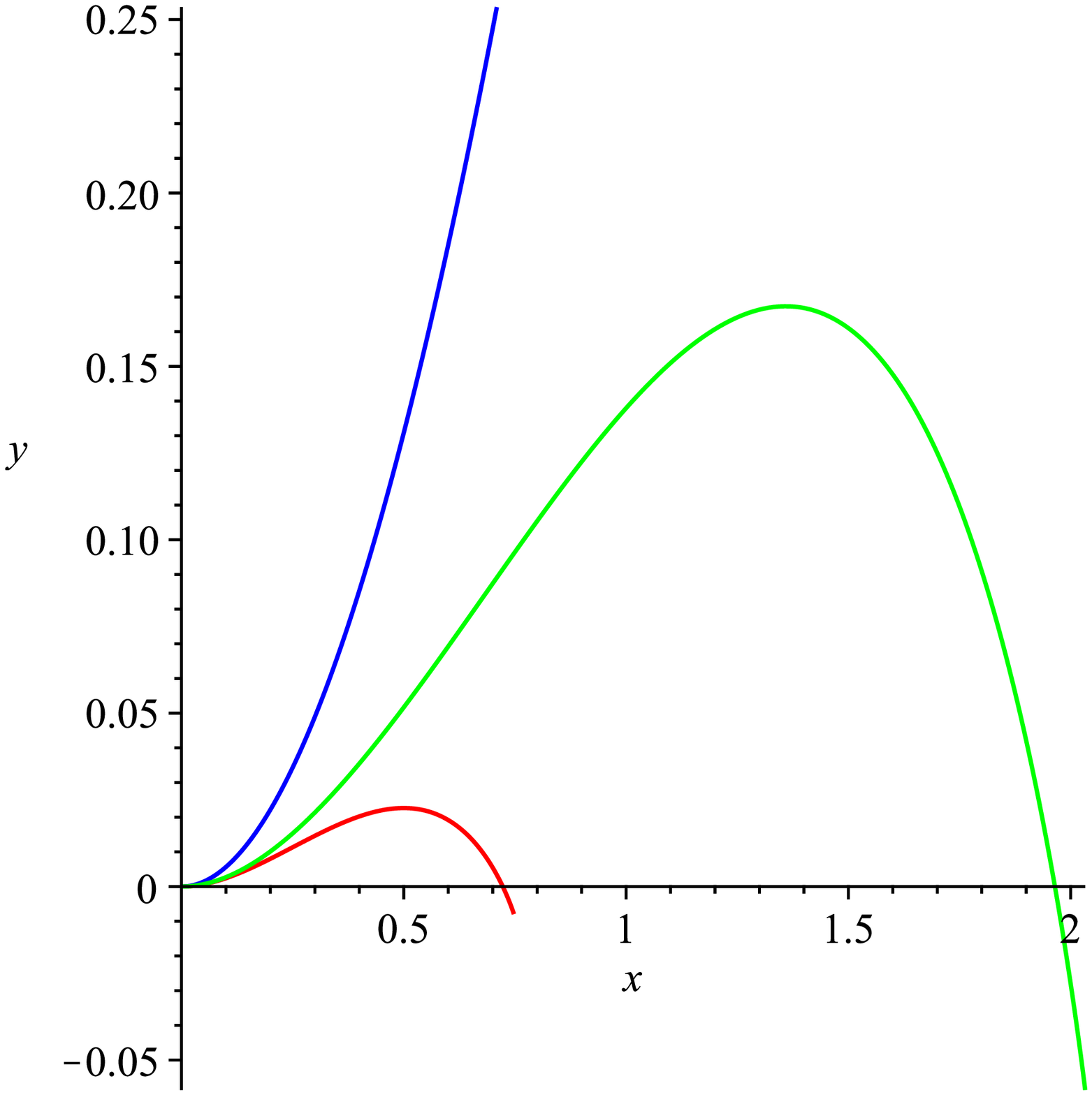} }
	\caption{Action of one--parametric family for $t_4$}
	\label{f4} 
\end{figure}

The computations are generally more technical for transformations from higher parts of grading. We only display the local minimizers corresponding to $t_8$ in Figure \ref{f8} for the same constants.

\begin{figure}[h] 
	\subfigure[Local mimimizer]{\includegraphics[width=0.24\textwidth]{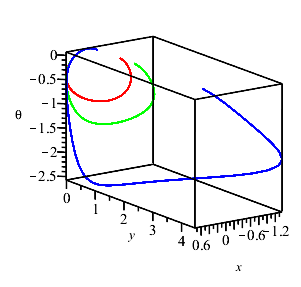}\label{f1a}}
	\subfigure[The parameter $\theta$]{\includegraphics[width=0.24\textwidth]{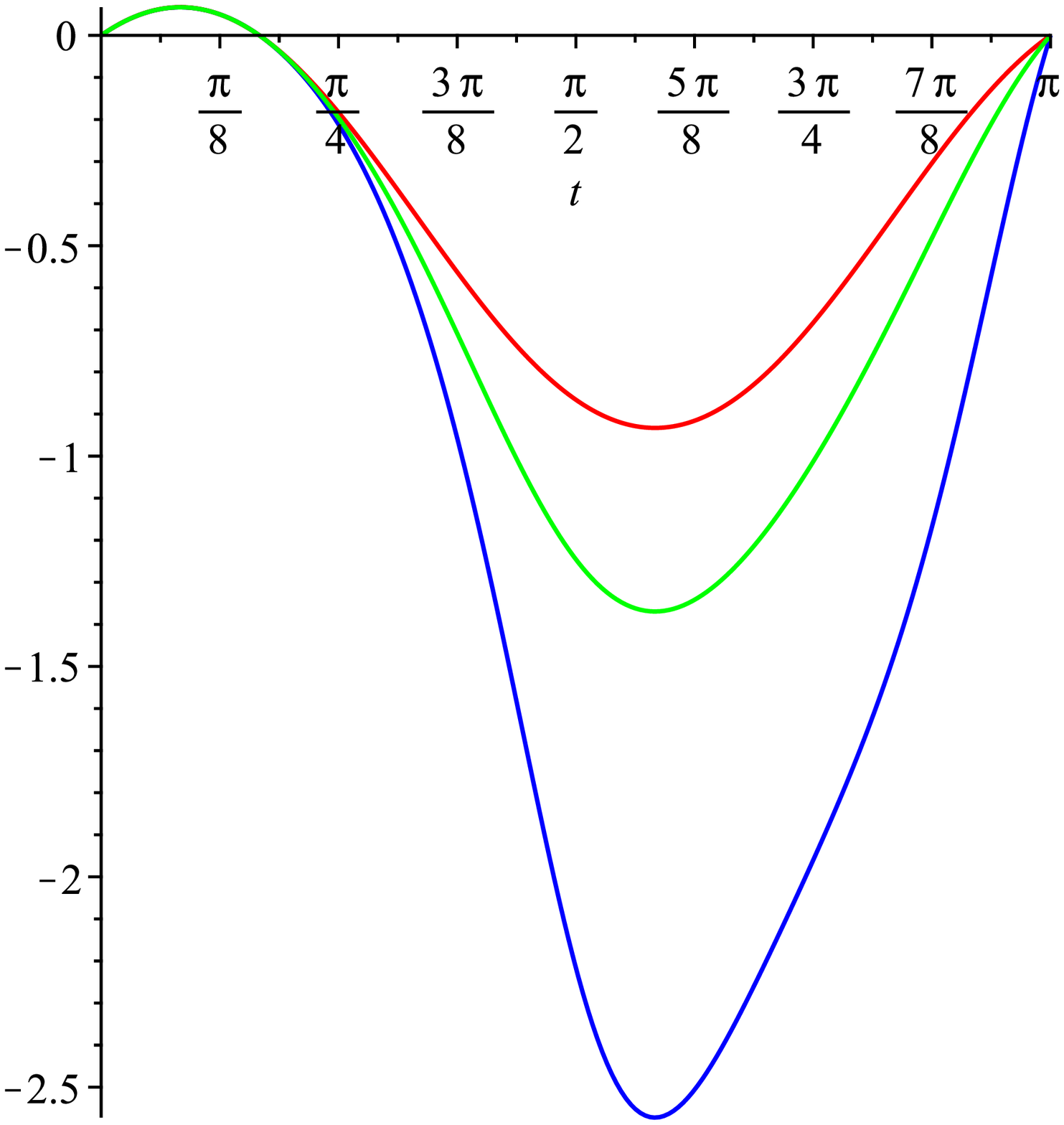}\label{f1b}}
	\subfigure[Trajectory of the contact pont in the plane $(x,y)$]{\includegraphics[width=0.24\textwidth]{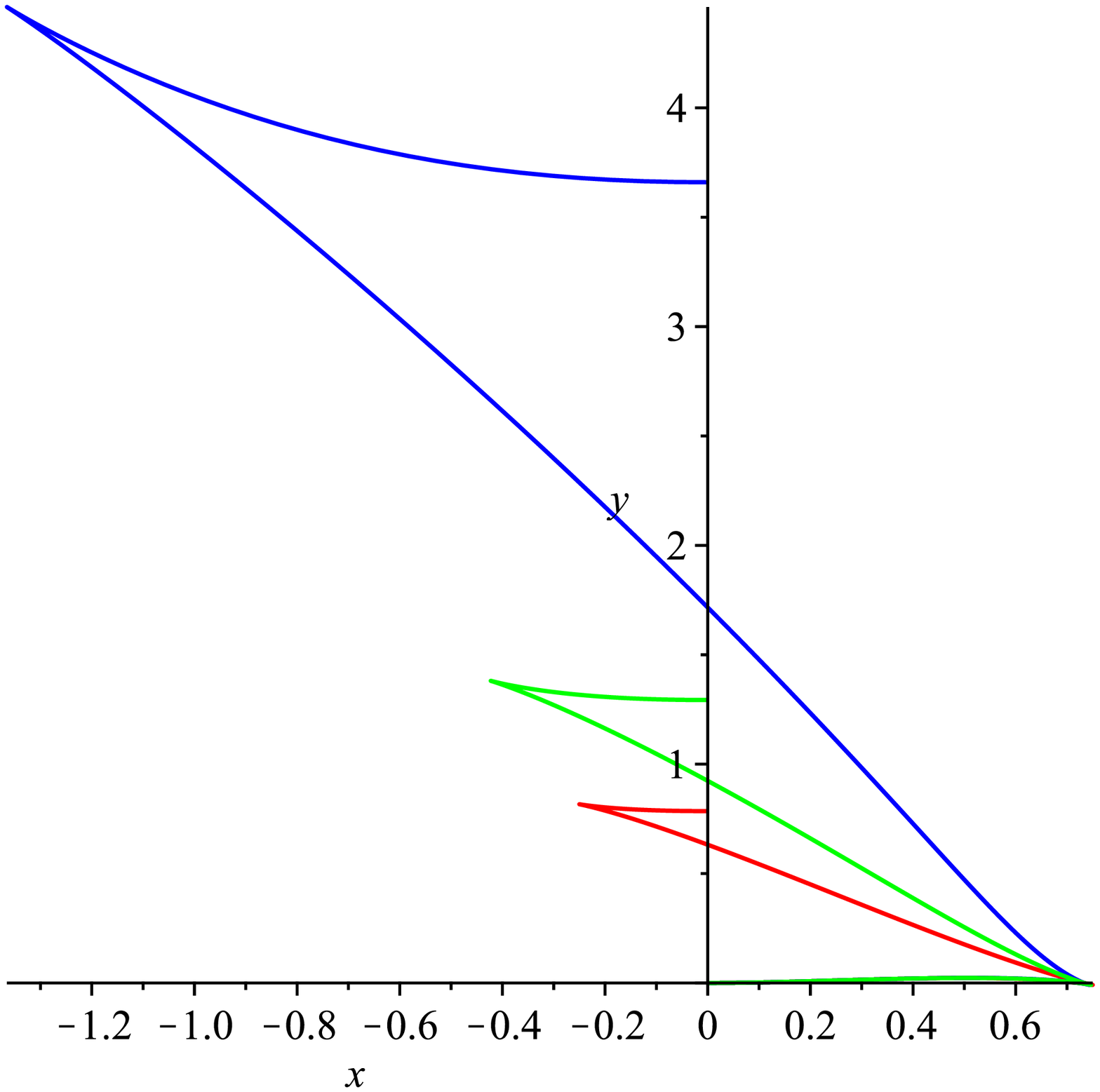} \label{f1c}}
	\caption{Action of one--parametric family for $t_8$}
	\label{f8} 
\end{figure}

Let us emphasize that in general, solving the new system may be much more difficult than applying the transformations. This is e.g. the case of $t_8$. 

\textbf{Acknowledgements.} 
The first two authors were supported by the grant of the Czech Science Foundation no. 17-21360S, "Advances in Snake-like Robot Control" and by the Grant No. FSI-S-17-4464. The last author was supported by the grant of the Czech Science Foundation no. 17-01171S, `Invariant differential operators and their applications in geometric modelling and control theory'. The algebraic computations are calculated in CAS Maple package DifferentialGeometry, \cite{dg}.

\appendix

\section{Tanaka prolongation} \label{tanaka}
The Heisenberg algebra can be seen as graded nilpotent Lie algebra 
$\mathfrak m = \mathfrak g^{-2} \oplus  \mathfrak g^{-1}$, 
where $\mathfrak g^{-2} = \langle n_3 \rangle$ and   $\mathfrak g^{-1} = \langle n_1, n_2 \rangle$. In the first step, we consider $\mathfrak{der} \equiv \text{Hom}
(\mathfrak g^{-1}, \mathfrak g^{-1})$ as the full algebra of derivations of $\mathfrak m $ preserving the grading. 
One can construct the algebraic universal (Tanaka) prolongation of the graded Lie algebra  $\mathfrak m \oplus \mathfrak g^0$ for $ \fg^0 \subset\mathfrak{der}$,  \cite{tan,zel}. 
For simplicity we consider the  generators of $\mathfrak{der}$ as
\begin{align*}
\Lambda^0_1 (n_1) = n_1, \: \:   \Lambda^0_1 (n_2) = n_2, \: \: \: \: \: \: \:
\Lambda^0_2 (n_1) = n_1, \: \:   \Lambda^0_2 (n_2) =  - n_2, \\
\Lambda^0_3 (n_1) = n_2, \: \:   \Lambda^0_3 (n_2) = n_1,  \: \: \: \: \: \: \:
\Lambda^0_4 (n_1) = n_2, \: \:   \Lambda^0_4 (n_2) = - n_1. 
\end{align*}
Additional structures on the distribution can be encoded by subalgebras as reductions of the principal bundle.
 
Let us consider the Lagrangean contact structure from Section \ref{sec_metr}.
Corresponding $\fg^0$ is generated by $\Lambda^0_1$ and $  \Lambda^0_2$ and one can see that $\Lambda^0_1 (n_3) =  2 n_3 $ and 
$ \Lambda^0_2 (n_3) = 0$.
 Since $\mathfrak g^0$ is a subalgebra of the algebra of derivation of $\mathfrak m$ preserving the grading, the subspace $\mathfrak m \oplus \mathfrak g^0$ is endowed with the natural structure of graded Lie algebra such that $[f,v]= f(v),$ where $f \in \mathfrak g^0$ and 
 $v \in \mathfrak m$. 
 Finally, we have $[ \Lambda^0_1 ,\Lambda^0_2] (n_1)  =  [ \Lambda^0_1 ,\Lambda^0_2] (n_2) =0 $. 
 Thus we have the Lie algebra $\mathfrak m \oplus \mathfrak g^0$.
 The next grading  part is 
 \begin{align*}
 \mathfrak g^{1} = \text{Hom} (\mathfrak g^{-2} ,\mathfrak g^{-1}) \cap \text{Hom} (\mathfrak g^{-1} ,\mathfrak g^{0}).
 \end{align*}
 Straightforward computation leads to  
$
\mathfrak g^{1}= \langle \Lambda_1^1, \Lambda_2^1 \rangle, 
$
where
\begin{align*}
\Lambda_1^1 (n_1) &=  \Lambda^0_1 +
3  \Lambda^0_2, \: \: \:
\Lambda_1^1  (n_2) =0, \:\:\:
\Lambda_1^1  (n_3) = - 2  n_2, \\
\Lambda_2^1 (n_1) &=  0, \: \: \:
\Lambda_2^1  (n_2) =\Lambda^0_1 -
3  \Lambda^0_2, \:\:\:
\Lambda_2^1  (n_3) =  2  n_1
\end{align*}
and we get Lie algebra $\mathfrak m \oplus \mathfrak g^0 \oplus \mathfrak g^1 $.

In Listing \ref{cod_tan} we compute the first Tanaka prolongation of $\fm$.
\tiny 
\begin{lstlisting}[language=Mathematica ,caption={Algebraic Tanaka prolongation}, label={cod_tan}]
Bra := [[E2, E3] = E1, [E4, E1] = 2*E1, [E4, E2] = E2,
[E4, E3] = E3, [E5, E1] = 0, [E5, E2] = E2, [E5, E3] = -E3];
Alg := LieAlgebraData(Bra, [E1, E2, E3, E4, E5], A, grading = [-2, -1, -1, 0, 0]);
DGsetup(Alg);
PA := TanakaProlongation(A, 1, AlgPA);
\end{lstlisting}
\normalsize

In the next step, we shall find the space 
$
\mathfrak g^{2} = \text{Hom} (\mathfrak g^{-2} ,\mathfrak g^{0}) \cap \text{Hom} (\mathfrak g^{-1} ,\mathfrak g^{1})
$
 and straightforward computation leads to   
 $\mathfrak g^2 = \langle \Lambda^2 \rangle$, where
\begin{align*}
\Lambda^2 (n_1) &= \Lambda^1_2 , \: \: \:
\Lambda^2  (n_2) =-\Lambda^1_1  , \:\:\:
\Lambda^2  (n_3) = 2 \Lambda^0_1.
\end{align*}
In the next step we shall find the space $
\mathfrak g^{3} = \text{Hom} (\mathfrak g^{-2} ,\mathfrak g^{1}) \cap \text{Hom} (\mathfrak g^{-1} ,\mathfrak g^{2})
$
but this is trivial.  
The algebraic Tanaka prolongation   has the form 
\begin{align*} \mathfrak g :=
\mathfrak g^{-2} \oplus \mathfrak g^{-1} 
\oplus \mathfrak g^{0} 
\oplus \mathfrak g^{1} 
\oplus \mathfrak g^{2}.
\end{align*}
The Lie algebra $ \mathfrak g$  has the structure of $\mathfrak{sl}(3 ,\R)$ and explicit multiplicative structure is in Table \ref{mg}.
\begin{table}[h!] 
 	\begin{center}
 		\begin{tabular}{ c|| c | c | c | c| c |c |c | c|} 
 		&	$e_{1}$ & $e_2$ & $e_3$ &  $e_4$ & $e_5$ &$e_6$ &$e_7$  & $e_8$ \\
 			\hline \hline
 			 $e_{1}$ & $0$ & $0$ & $0$ & $-2 e_1$ & 0&$2e_3$&$-2e_3$&$- 2 e_4$\\
 			 \hline
              $e_2$ & $0$ & $0$ & $e_1$ & $-e_2$  & $-e_2$ &  $-e_4 - 3  e_5$ &0& -$e_7$	\\ 
 			 \hline
 			  $e_3$ & $0$ & $-e_1$ & $0$ & $-e_3$& $e_3$ &0&$-e_4 +
 			   			3  e_5$&$e_6$\\
 			 \hline
 			 $e_4$ & 2$e_1$ & $e_2$ & $e_3$ & $0$ & 0&$-e_6$ &$-e_7$&-$e_8$\\
 			 \hline
 			  $e_5$ & $0$ & $e_2$ & $-e_3$  & $0$ & 0&$-e_6$&$e_7$&0\\
 			\hline
 			$e_6$ & $ -2  e_3$ &$e_4 + 3  e_5$ & $0$  & $e_6 $  & $e_6$ &0 &$2 e_8$&0\\ 
 			\hline 
 			$e_7$ &$2 e_2$& $0$ & $e_4 -3  e_5$  & $e_7$& $- e_7 $ &$2 e_8$ & 0&0\\
 			\hline 
 		 $e_8$	& $ 2 e_4$ & $e_7$ & $-e_6$  &  $e_8$ & 0&0 & 0&0\\
 		\hline
 		\end{tabular}   
 	\end{center}
 	\caption{The graded algebra $\mathfrak g  \cong
 	\langle e_1\rangle  \oplus\langle e_2, e_3\rangle  \oplus\langle e_4,e_5\rangle 
 	\oplus\langle e_6,e_7\rangle  \oplus\langle e_8\rangle  $   \label{mg} }
 \end{table}
In Listing \ref{cod_tan} we compute the second (and thus full) Tanaka prolongation of $\fm$.
\tiny 
\begin{lstlisting}[language=Mathematica ,caption={Second Algebraic Tanaka prolongation}, label={cod_tan}]
Bra := [[E2, E3] = E1, [E4, E1] = 2*E1, [E4, E2] = E2,
[E4, E3] = E3, [E5, E1] = 0, [E5, E2] = E2, [E5, E3] = -E3];
Alg := LieAlgebraData(Bra, [E1, E2, E3, E4, E5], A, grading = [-2, -1, -1, 0, 0]);
DGsetup(Alg);
PA := TanakaProlongation(A, 2, AlgPA);
\end{lstlisting}
\normalsize

Let us now find the corresponding geometric prolongation. The Maurer--Cartan form on the Lie group $N$ is
$$ \omega = (e_3 - \theta e_1) \text{d}x + 
e_1 \text{d}y + e_2 \text{d}{\theta} =
(\text{d}y - \theta \text{d}x ) e_1  +   \text{d}{\theta} e_2 +\text{d}x e_3$$

It follows from Table \ref{mg} that  $e_1 \in \mathfrak g^{-2}$ 
and we look for a vector field $Y_{e_1}$ on $N$ such that 
$\omega(Y_{e_1}) = e_1$. It follows from MC form that 
$Y_{e_1} = \partial_y$.

For $e_2$ from Table \ref{mg} contained in $\fg^{-1}$ we look for a vector field $Y_{e_2}$ on $N$ such that  $\omega(Y_{e_2}) = u^{-1} + u^{-2}$ that satisfy   
$ u^{-1} = e_2 $ and $ \text{d}u^{-2} = [u^{-1},\omega^{-1} ] $.
This means $$\text{d}u^{-2} = [e_2,\text{d}{\theta} e_2 +\text{d}x e_3] = \text{d}x e_1$$ and we get by direct integration $ u^{-2} = x e_1 $. 
Altogether, $\omega(Y_{e_2}) =  e_2 +  x e_1$ and thus $Y_{e_2} = \partial_{\theta} + x \partial_y  $. 
For  $e_3 \in \mathfrak g^{-1}$ we get similarly 
 $\omega(Y_{e_2}) =  e_3 -{\theta} e_1$ and thus $Y_{e_3} = \partial_{x}$.
 Let us emphasize that these are exactly  the right--invariant vector fields from 
Proposition \ref{pravo-inv}.

For $e_4 \in \fg^0$ we look for $Y_{e_4}$ such that $\omega(Y_{e4})=
u^0+u^{-1}+u^{-2}$ satisfying $  u^{0} = e_4$ together with  
$\text{d}u^{-1} = [u^{0},\omega^{-1} ]$ and 
$\text{d}u^{-2} = [u^{-1},\omega^{-1} ] +  [u^{0},\omega^{-2} ]$.
We solve
$$\omega(Y_{e_4}) = e_4  +x e_3 +\theta e_2+(2 y- \theta  x) e_1  =
  e_4+x e_3+\theta e_2  +(2 y- \theta  x) e_1$$ and thus we get $
  Y_{e_4} =  x \partial_x + 2y \partial_y + \theta \partial_{\theta}$.
For $e_5 \in \fg^0$ we similarly have 
$ \omega(Y_{e_5}) = e_5   -x e_3 +\theta e_2 +(\theta  x )e_1 $ and we get   
$   Y_{e_5} =  -x \partial_x + \theta \partial_{\theta}
$.
 
 For $e_6 \in \mathfrak g^1$ we solve $
   u^{1} = e_6 $ and  
$   \text{d}u^{0} = [u^{1},\omega^{-1} ]$ together with 
$   \text{d}u^{-1} = [u^{1},\omega^{-2} ] +  [u^{0},\omega^{-1} ] $ and 
$   \text{d}u^{-2} = [u^{0},\omega^{-2} ] +  [u^{-1},\omega^{-1} ] 
$.

In this way, we construct all remaining fields. Altogether, we get 
  \begin{align*}  
 Y_{e_1} &= \partial_y, \: \: \: \: \: \:\: \: \: \:\: \: \: \:\: \: \: \:
 \: \: \: \:\: \: \: \:\: \:  \:\: \: \: \:Y_{e_5} =  -x \partial_x + \theta \partial_{\theta}, \\
 Y_{e_2} &= \partial_{\theta} + x \partial_y, 
 \: \: \: \:\: \: \: \:\: \: \: \: \:\: \: \: \:\: \: \: \:Y_{e_6} =  -2y \partial_x + 2 \theta^2 \partial_{\theta}, \\
  Y_{e_3} &= \partial_{x},\: \: \: \:\: \: \: \:\: \: \: \:\: \: \: \:\: \: \: \:\: \: \: \:\: \: \: \:\: \: \: \:  \:Y_{e_7} =  2x^2 \partial_x +  2xy \partial_y + (2 y - x \theta), \\
  Y_{e_4} &=  x \partial_x + 2y \partial_y + \theta \partial_{\theta},  
  \: \: \: \:    Y_{e_8} = 2 xy \partial_x + 2 y^2 \partial_y + (2y \theta - 2x \theta^2)
   \partial_{\theta}.
   \end{align*} 
Let us emphasize that these fields coincides with fields $t_i$, $i=1,\dots, 8$ from Proposition \ref{sym-lagr} up to a constant multiplies.

\section{The Maple code} \label{sec_mapl}
We present here the whole Maple code which is prepared for Maple 2019.1, Tuesday, May 21, 2019. The corresponding file disc-listing.mw shall be also available on arXiv.  

\tiny

\begin{verbatim}
restart;
with(DifferentialGeometry);
with(Tools);
with(LieAlgebras);
with(Tensor);
with(PDEtools, casesplit, declare);
with(GroupActions);
with(LinearAlgebra);
DGsetup([x, y, theta], M);
pf := evalDG(dy*cos(theta) - dx*sin(theta));
an := Annihilator([pf]);
X1 := an[1];
X2 := an[2];
X1 := an[1];
X2 := evalDG(sin(theta)*an[2]);

X12 := LieBracket(X1, X2);
Alg := LieAlgebraData([X1, X2, X12]);
DGsetup(Alg);
MultiplicationTable();
Query("Solvable");
Query("Nilpotent");

ChangeFrame(M);
db := DualBasis([X1, X2, X12]);
k := evalDG((db[1] &t db[1]) + (db[2] &t db[2]));

Alg := LieAlgebraData([X1, X2, X12]);
DGsetup(Alg);
MultiplicationTable();
Query("Solvable");
Query("Nilpotent");
ChangeFrame(M);
db := DualBasis([X1, X2, X12]);
m := evalDG((db[1] &t db[1]) + (db[2] &t db[2]));

DGsetup([v1, v2, v3], VS);
DGsetup(Alg);
LieAlgebras:-Adjoint(Alg, representationspace = VS);
LieAlgebras:-Adjoint(e1*h1 + e2*h2);

DGsetup([x, y, theta], N);
n1 := evalDG(D_theta);
n2 := evalDG(D_y*theta + D_x);
n3 := LieBracket(n1, n2);
NAlg := LieAlgebraData([n1, n2, n3]);
DGsetup(NAlg);
Query("Solvable");
Query("Nilpotent");
DGsetup([x1, x2, x3], L);
T := Transformation(L, L, [x1 = x1 + y1, x2 = x2 + y2, x3 = x1*y2 + x3 + y3]);
DGsetup([y1, y2, y3], G);
LG := LieGroup(T, G);
InvariantVectorsAndForms(LG);

ChangeFrame(N);
ndb := DualBasis([n1, n2, n3]);
r := evalDG((ndb[1] &t ndb[1]) + (ndb[2] &t ndb[2]));

DGsetup(NAlg);
LieAlgebras:-Adjoint(NAlg, representationspace = VS);
ADJ := LieAlgebras:-Adjoint(e1*h1(t) + e2*h2(t));
SV := map(diff, Matrix([h1(t), h2(t), h3(t)]), t) - MatrixMatrixMultiply(Matrix([h1(t), h2(t), h3(t)]), ADJ);
sys_vert := {SV[1, 1], SV[1, 2], SV[1, 3]};
sol_vert := pdsolve(sys_vert);
pdetest(sol_vert[1], sys_vert);
pdetest(sol_vert[2], sys_vert);
GC := subs(x = x(t), y = y(t), theta = theta(t), GetComponents(evalDG(h1(t)*n1 + h2(t)*n2), DGinfo("FrameBaseVectors")));
sys := {op(sys_vert), diff(theta(t), t) = GC[3], diff(x(t), t) = GC[1], diff(y(t), t) = GC[2]};
sol := pdsolve(sys);
pdetest(sol[1], sys);
pdetest(sol[2], sys);
sol2 := [sol[2][4], sol[2][5], sol[2][6]];
poc2 := eval(subs(t = 0, sol2));
psol2 := solve({rhs(poc2[1]), rhs(poc2[2]), rhs(poc2[3])});
sol20 := simplify(subs(psol2, sol2));
eval(subs(t = 0, sol20));

ChangeFrame(N);
inf_met := InfinitesimalSymmetriesOfGeometricObjectFields([[n1, n2], r], output = "list");
inf_met_stab := IsotropySubalgebra(inf_met, [x = 0, y = 0, theta = 0]);

v := evalDG(f1(x, y, theta)*n1 + f2(x, y, theta)*n2 + f3(x, y, theta)*n3);
pfh := op(Annihilator([n1, n2]));
r1 := ContractIndices(LieDerivative(v, n1), pfh, [[1, 1]]);
r2 := ContractIndices(LieDerivative(v, n2), pfh, [[1, 1]]);
r3 := DGinfo(LieDerivative(v, r), "CoefficientSet");
r4 := {r1, r2, op(r3)};
pdsolve(r4);

t1 := D_x;
t2 := evalDG(D_y*x + D_theta);
t3 := D_y;
fl := Flow(evalDG(b1*t1 + b2*t2 + b3*t3), s);
tb := Transformation(N, N, ApplyTransformation(fl, [x = x, y = y, theta = theta]));

t0 := evalDG(theta*D_x + (theta^2/2 - x^2/2)*D_y - x*D_theta);
f2 := simplify(Flow(t0, s));
t2 := simplify(Transformation(N, N, ApplyTransformation(f2, [x = x, y = y, theta = theta])));

ChangeFrame(N);
inf_lagr := InfinitesimalSymmetriesOfGeometricObjectFields([[n1], [n2]], output = "list");
alg_lagr := LieAlgebraData(inf_lagr, al);
DGsetup(alg_lagr);
Query("Semisimple");
Query([e1, e2, e3, e4, e5, e6, e7, e8], "SplitForm");

rep := LieAlgebras:-Adjoint(alg_lagr);
a1 := SubMatrix(rep[4], [2, 3], [2, 3]);
a2 := SubMatrix(rep[5], [2, 3], [2, 3]);

psol2mod := solve({rhs(poc2[1]), rhs(poc2[2]), rhs(poc2[3])}, {_C1, _C4, _C5, _C6});
sol20mod := simplify(subs(psol2mod, sol2));
t6 := evalDG(y/2*D_x - theta^2/2*D_theta);
f6 := simplify(Flow(t6, s));
p6 := simplify(PushPullTensor(f6, InverseTransformation(f6), [n1, n2]));
evalDG(h1(t)*p6[1] + h2(t)*p6[2]);
GC := subs(x = x(t), y = y(t), theta = theta(t), GetComponents(%, [D_x, D_y, D_theta]));
sys6 := {op(sys_vert), diff(theta(t), t) = GC[3], diff(x(t), t) = GC[1], diff(y(t), t) = GC[2]};
at6 := simplify(ApplyTransformation(f6, [rhs(sol20mod[2]), rhs(sol20mod[3]), rhs(sol20mod[1])]));
test := {op(sol_vert[2]), theta(t) = at6[3], x(t) = at6[1], y(t) = at6[2]};
pdetest(test, sys6);

Bra := [[E2, E3] = E1, [E4, E1] = 2*E1, [E4, E2] = E2, [E4, E3] = E3, [E5, E1] = 0, [E5, E2] = E2, [E5, E3] = -E3];
Alg := LieAlgebraData(Bra, [E1, E2, E3, E4, E5], A, grading = [-2, -1, -1, 0, 0]);
DGsetup(Alg);
PA := TanakaProlongation(A, 1, AlgPA);

Bra := [[E2, E3] = E1, [E4, E1] = 2*E1, [E4, E2] = E2, [E4, E3] = E3, [E5, E1] = 0, [E5, E2] = E2, [E5, E3] = -E3];
Alg := LieAlgebraData(Bra, [E1, E2, E3, E4, E5], A, grading = [-2, -1, -1, 0, 0]);
DGsetup(Alg);
PA := TanakaProlongation(A, 2, AlgPA);

\end{verbatim}

\normalsize

\end{document}